\definecolor{bg}{rgb}{0.93,0.93,0.93}
\newtheorem{theorem}{Theorem}[section]
\newtheorem{proposition}[theorem]{Proposition}
\newtheorem{corollary}[theorem]{Corollary}
\newtheorem{definition}[theorem]{Definition}
\newtheorem{assumption}[theorem]{Assumption}
\newtheorem{remark}[theorem]{Remark}
\acrodef{pde}[PDE]{partial differential equation}
\acrodef{fe}[FE]{finite element}
\acrodef{fem}[FEM]{finite element method}
\acrodef{xfem}[XFEM]{extended finite element method}
\acrodef{dof}[DOF]{degree of freedom}
\acrodef{agfe}[AgFE]{aggregated finite element}
\acrodef{agfem}[AgFEM]{aggregated finite element method}
\acrodef{cg}[CG]{continuous Galerkin}
\acrodef{dg}[DG]{discontinuous Galerkin}
\acrodef{bc}[BC]{boundary condition}
\acrodef{aabb}[AABB]{axis-aligned bounding box}
\acrodef{csg}[CSG]{constructive solid geometry}
\acrodef{gp}[GP]{ghost penalty}
\newcommand{\tnor}[1]{\| #1 \|}
\definecolor{shadecolor}{gray}{.92}
\definecolor{incolor}{rgb}{0,0,.7}
\definecolor{outcolor}{rgb}{.65,0,0}
\definecolor{syntaxcolor}{rgb}{.65,0,0}
\newcommand{\new}[1]{{{#1}}}
\definecolor{FigBlue}{RGB}{0,0,255}
\definecolor{FigCyan}{RGB}{0,255,255}
\definecolor{FigPurple}{RGB}{222,135,170}
\definecolor{FigYellow}{RGB}{255,221,85}
\definecolor{FigRed}{RGB}{255,0,0}
\begin{document}

\title[Robust high-order unfitted finite elements]{Robust high-order unfitted finite elements \\ by interpolation-based discrete extension}
\author[S. Badia]{Santiago Badia$^{1,2}$}

\author[E. Neiva]{Eric Neiva$^{2,3,*}$}

\author[F. Verdugo]{Francesc Verdugo$^{2}$}

\thanks{\null\\
$^{1}$ School of Mathematics, Monash University, Clayton, Victoria, 3800, Australia.\\
$^{2}$ Centre Internacional de M\`etodes Num\`erics a l'Enginyeria, Esteve Terrades 5, 08860 Castelldefels, Spain.\\
$^{3}$ Center for Interdisciplinary Research in Biology (CIRB), College de France, CNRS, INSERM, Université PSL, Paris, France.\\
$^*$ Corresponding author.\\
E-mails: {\tt santiago.badia@monash.edu} (SB) 
{\tt eric.miranda-neiva@college-de-france.fr} (EN)
{\tt fverdugo@cimne.upc.edu} (FV)
}

\date{\today}

\begin{abstract}
In this work, we propose a novel formulation for the solution of partial differential equations using finite element methods on unfitted meshes. The proposed formulation relies on the discrete extension operator proposed in the aggregated finite element method. This formulation is robust with respect to the location of the boundary/interface within the cell. One can prove enhanced stability results, not only on the physical domain, but on the whole active mesh. However, the stability constants grow exponentially with the polynomial order being used, since the underlying extension operators are defined via extrapolation. To address this issue, we introduce a new variant of aggregated finite elements, in which the extension in the physical domain is an interpolation for polynomials of order higher than two. As a result, the stability constants only grow at a polynomial rate with the order of approximation. We demonstrate that this approach enables robust high-order approximations with the aggregated finite element method. \new{The proposed method is consistent, optimally convergent, and with a condition number that scales optimally for high order approximation.}
\end{abstract}

\maketitle

\noindent{\bf {Keywords}}: Embedded methods; immersed methods; unfitted finite elements; high-order finite elements; aggregated finite elements.

\section{Introduction}\label{sec:introduction}

Numerical simulations with \emph{standard} \ac{fe} methods bind together the computational mesh and the geometry of the physical problem. However, body-fitted unstructured mesh generation often requires manual intervention and does not scale properly on distributed platforms. This computational bottleneck becomes especially severe when modeling moving boundaries or interfaces. On the other hand, \emph{unfitted} \ac{fe} methods decouple the mesh from the geometry. The main idea is to embed the physical domain into a geometrically simple background grid (usually a uniform or an adaptive Cartesian grid). In this way, the computational mesh can be generated and partitioned much more efficiently. Similarly, they can easily track embedded interfaces. As a result, they are becoming increasingly attractive in applications with moving interfaces~\cite{Waisman2013,alauzet2016nitsche,Massing2015,kirchhart2016analysis,Badia2020Dec} and in applications with varying domains, such as shape or topology optimisation~\cite{Burman2018}, additive manufacturing~\cite{neiva2020numerical,carraturo2020modeling}, and stochastic geometry problems~\cite{badia2019embedded}. In the numerical community, this family of methods is known by different names, e.g., \emph{unfitted}, \emph{embedded}, or \emph{immersed}. 

Despite circumventing the mesh generation bottleneck, naive unfitted methods are prone to numerical instabilities and severe ill-conditioning~\cite{DePrenter2017,Badia2018}. In the case of unfitted boundaries, the intersection of a background cell with the physical domain can be arbitrarily small and with unbounded aspect ratio. This leads to the so-called \emph{small cut cell problem}: basis functions of the standard finite element space, defined in the background (unfitted) mesh, can have arbitrarily small support in the physical domain. This support depends on the intersection between the background mesh and the boundary (or interface), which in general cannot be controlled. This problem is also present on unfitted interfaces with a high contrast of physical properties~\cite{Neiva2021}. \new{Some works try to circumvent the ill-conditioning of the system using tailored preconditioning strategies (see \cite{DePrenter2017,badia_robust_2017}).}

There is ample literature on how to mitigate the small cut cell problem~\cite{Kummer2017,lehrenfeld2016high,guzman2017finite,li2019shifted}. \new{One of the most popular approaches to solve this issue is the so-called finite-cell method \cite{elhaddad2018multi,xu2016tetrahedral,Jomo2018,Hubrich_2019,Schillinger2015,dauge_theoretical_2015}. The finite cell stabilisation adds a non-consistent penalisation to ensure robustness. In order to preserve optimal convergence rates, the penalty coefficient must be of the order of $h^{2p-1}$ ($h$ is a characteristic mesh size and $p$ the order of approximation). However, the condition number scales suboptimally as $h^{-(2p-1)}$. The finite cell method cannot provide both optimal convergence and condition number bounds. We refer to \cite{LARSSON2022114792} for more details. The removal of ill-conditioned basis functions \cite{Elfverson2018} suffers the same problem as the finite cell method. While the method improves the condition number with respect to cut locations, the resulting condition number is in general worse than $\mathcal{O}(h^{-2})$ (see \cite{Elfverson2018}).}

Few formulations achieve both full robustness and optimality, independent of cut location and material contrast. Among them we have the so-called \emph{\ac{gp}}~\cite{burman2010ghost,burman_cutfem_2015} methods. These schemes were originally motivated for $\mathcal{C}^{0}$ finite element spaces on simplicial meshes and later used in combination with discontinuous Galerkin formulations, \new{or with B-spline basis functions \cite{HOANG2019421}}. Typically, \ac{gp} terms act on the jumps of derivatives across facets cutting the unfitted boundary.

An alternative way to address ill-conditioning due to small cuts is via \emph{cell aggregation} or \emph{cell agglomeration} techniques. This approach blends well with \ac{dg} methods, because \new{\ac{dg} schemes can be easily formulated on agglomerated meshes~\cite{Johansson2013,helzel_high-resolution_2005}.} To ensure robustness, it suffices that each cell (now an aggregate of cells) has \emph{enough} support in the interior of the domain~\cite{muller2017high}. However, accommodating this strategy to \ac{cg} methods is more involved. The key point is how to retain $\mathcal{C}^{0}$-continuity after cell agglomeration.

This issue was addressed in~\cite{Badia2018}, leading to the so-called \ac{agfem}. The main idea of \ac{agfem} is to build a \emph{discrete extension operator} from well-posed \acp{dof} (i.e., the ones related to shape functions with enough support in the domain interior) to ill-posed \acp{dof} (i.e., the ones with small support) that preserves $\mathcal{C}^{0}$-continuity. As a result, basis functions associated with badly cut cells are removed and the ill-conditioning issues solved. Underlying the construction of the discrete extension operator, there are two ingredients: (1) an easy-to-implement and general cell aggregation scheme and (2) a map assigning every geometrical entity (e.g., vertex, edge, face) to one of the aggregates containing it. \new{We also note that basis extensions are also used in the context of spline approximations (see, e.g., \cite{chu_stabilization_2022}).}

\ac{agfem} enjoys good numerical properties, such as stability, condition number bounds, optimal convergence, and continuity with respect to data; detailed mathematical analysis of the method is included in~\cite{Badia2018} for elliptic problems and in~\cite{Badia2018a} for the Stokes equation. \ac{agfem} is also amenable to arbitrarily complex 3D geometries, distributed implementations for large scale problems~\cite{Verdugo2019}, error-driven $h$-adaptivity and parallel tree-based meshes~\cite{Badia2020Jun}, explicit time-stepping for the wave equation~\cite{burman2022explicit} and elliptic interface problems with high contrast~\cite{Neiva2021}. Furthermore, a \emph{weak} version of \ac{agfem} can be formulated as a ghost penalty method which penalises the distance between the solution and its aggregation-based discrete extension~\cite{badia2021linkAgFEM}.

High-order unfitted \acp{fem} are less common~\cite{Schillinger2015,burman2021unfitted}. There are some works that deal with the geometrical aspects related to high-order geometrical approximation on unfitted meshes, e.g., using high-order geometrical maps~\cite{lehrenfeld2016high} \new{or advanced numerical quadratures on cut cells~\cite{elhaddad2018multi,KUDELA2016406}}. \ac{dg} schemes with agglomeration can readily be applied to high-order approximations~\cite{muller2017high,saye2017implicit}. Ghost penalty is conceptually applicable to high-order polynomial approximations, even though it comes with a cost: High order derivative jumps must be penalised on facets. The application of \ac{gp} to high order has been explored, e.g., in~\cite{Hansbo2017,larson2020stabilization}. Even though the algorithm is theoretically robust at high orders, the penalty term for a $p$-th order of approximation is penalising $\mathcal{C}^{p+1}$ continuity on the boundary, which weakly enforces an extension of the values from interior to cut cells. Thus, if the ${1, \ldots, p+1}$ derivatives on cell boundaries are very high, then these penalties promote the amplification of rounding errors. \new{Other interesting approaches are the $\phi$-FEM method in \cite{doi:10.1137/19M1248947} and the high-order version of the shifted boundary method in \cite{ATALLAH2022114885}.}

\acp{agfem} rely on an extension operator from well-posed to ill-posed \acp{dof}. The stability and convergence properties of the algorithm have already been proved for high-order schemes in~\cite{Badia2018}. However, the bases being used to describe the Ag\ac{fe} space are determined by an extension that relies on extrapolation. As a result, for high-order schemes, some of the building blocks of the algorithm (e.g., the constraint computations) involve huge coefficients, possible cancellation, and promote the amplification of rounding errors. \new{We note that, in the context of splines, the weighted extended B-spline method is affected by the same issue (see, e.g., \cite[Th 4.2]{10.2307/3062005}).} This problem is related to the basis being used, not the method itself. \emph{In this work, we address poor performance of extrapolation-based high-order \ac{agfem} by designing a new \ac{fe} basis, such that the discrete extension operator relies on a first order extrapolation of first order polynomials (which are harmless) and an interpolatory extension in the physical domain for high-order terms.} The new basis is derived from standard hierarchical modal $\mathcal{C}^0$ basis functions~\cite{karniadakis2013spectral}, the same ones used in $p$-\ac{fem}~\cite{szabo1991finite}. It leads to \ac{agfem} methods with much better robustness in the high-order regime, compared to purely extrapolative ones. \new{The resulting unfitted \ac{fem} is consistent, optimal and robust with respect to cut locations and order of approximation. The condition number of the resulting system optimally scales as $\mathcal{O}(h^{-2})$. One can alternatively use the proposed extension in a CutFEM-like setting, using the ideas in \cite{badia2021linkAgFEM} to end up with a weakly consistent but still optimal method.} 

This work is structured as follows. First, we introduce the problem and some notation that include a thorough description of the geometrical sets that are required to implement \ac{agfem}. Next, we introduce the purely extrapolative \emph{strong} version of \ac{agfem} and discuss its limitations for high-order approximations. After that, we propose and analyse a novel variant of a modal $\mathcal{C}^0$ basis for the interior \ac{fe} space. The key property of the new basis is that its discrete extension operator is interpolatory in the physical domain. We provide a very detailed numerical experimentation in terms of convergence rates of the $L^2$ and $H^1$ error norms and condition number bounds. We address efficient numerical integration at the cut cells and adequate static condensation techniques for high-order \ac{agfem}. We consider \emph{strong} \ac{agfem} approximations of the Poisson and linear elasticity problems on uniform meshes, as well as different approximation orders, geometries, and intersection locations. We show that the new formulation solves the ill-conditioning issues related to purely extrapolative approaches. 
The original contributions of the article are:
\begin{enumerate}
  \item A novel discrete extension operator that relies on interpolation in the physical space for high orders of approximation to build robust and convergent Ag\ac{fe} spaces on unfitted meshes;
  \item A numerical analysis of the proposed approach, which proves the continuity of this operator uniformly with respect to the polynomial order in the physical space;
  \item A thorough numerical experimentation addressing numerical integration and static condensation of high-order Ag\ac{fe} systems and demonstrating the clear superiority of this approach with respect to purely extrapolative ones, in terms of optimality, accuracy and condition numbers.
\end{enumerate}

\section{Unfitted finite elements}\label{sec:unfitted-fem}

\subsection{Rationale}\label{sec:geom-setup}

Let $\Omega \subset \mathbb{R}^d$, $d$ being the space dimension, be an open bounded Lipschitz domain. $\Omega$ represents the physical domain of the \ac{pde} problem. Standard \ac{fe} methods are formulated in a so-called \emph{body-fitted} mesh, which is a partition of $\Omega$ (or of an approximation of it). In general, we leverage unstructured mesh generation algorithms to create the \emph{body-fitted} mesh of the domain.

Unfitted discretisation techniques, by contrast, decouple the computational mesh from the physical domain. Instead of relying on a \emph{body-fitted} mesh, they embed the physical domain into an arbitrary, but simple artificial domain $\Omega_{h}^{\mathrm{art}}$, such that $\Omega \subset \Omega_{h}^{\mathrm{art}}$. The artificial domain can be trivial, e.g., a bounding box of $\Omega$. The key step in unfitted methods is to discretise $\Omega_{h}^{\mathrm{art}}$, instead of $\Omega$. Thus, the geometrical discretisation is much simpler (and cheaper) than a body-fitted partition of $\Omega$.

Simplifying the discretisation step, in turn, complicates the functional discretisation. Standard (body-fitted) \ac{fe} methods cannot be straightforwardly used. First, strong imposition of Dirichlet boundary conditions assumes the mesh is body-fitted; in an embedded setting, Dirichlet boundary conditions are weakly imposed, instead. Second, cell-wise integration of \ac{fe} forms is more involved; local integration must be performed on the intersection between cells and $\Omega$, only. Third, naive \ac{fe} discretisations can be arbitrarily ill-posed.

\subsection{Geometrical discretisation}\label{sec:cell-aggregation}

Let $\mathcal{T}_{h}$ be a conforming, quasi-uniform and shape-regular background partition of $\Omega_{h}^{\mathrm{art}}$. We represent with $h_T$ the diameter of a cell $T \in \mathcal{T}_{h}$ and the characteristic mesh size is $h \doteq \max_{T \in \mathcal{T}_{h}} h_T$. We introduce next some geometrical definitions in order to define unfitted \ac{fe} discretisations.

First, we let $\{\mathcal{T}_{h}^{\mathrm{act}}, \mathcal{T}_{h}^{\mathrm{out}}\}$ denote a partition of $\mathcal{T}_{h}$ into $\Omega$-\emph{active} and $\Omega$-\emph{exterior} cells. Exterior cells are those with null intersection with $\Omega$. Since they do not play any role in the functional discretisation, they can be discarded. Conversely, the active mesh $\mathcal{T}_{h}^{\mathrm{act}} = \mathcal{T}_{h} \setminus \mathcal{T}_{h}^{\mathrm{out}}$ refers to the subset of cells with non-null intersection with $\Omega$, i.e., those relevant to the functional discretisation. A simple unfitted method, such as the \ac{xfem}~\cite{sukumar_modeling_2001}, formulates the discrete problem on a standard \ac{fe} space on $\mathcal{T}_{h}^{\mathrm{act}}$. Nonetheless, this approach is prone to severe ill-conditioning (see discussion below). This problem, widely known as the \emph{small cut cell problem}, is caused by cut cells with arbitrarily small support on $\Omega$. To deal with this issue, we consider a further partition $\{\mathcal{T}_{h}^{\mathrm{in}}, \mathcal{T}_{h}^{\mathrm{cut}}\}$ of $\mathcal{T}_{h}^{\mathrm{act}}$ into $\Omega$-\emph{interior} and $\Omega$-\emph{cut} cells, see Fig.~\ref{fig:geodefs_a}. Strictly, one would need to only isolate cut cells with small support on $\Omega$ from the rest of active cells. Regardless of this choice, the following discussion applies verbatim. We let the interior of the closure of $\bigcup_{T \in \mathcal{T}_{h}^{\#}} T$ be represented by $\Omega_h^{\#}$, for $\# \in \left\{ \mathrm{act}, \mathrm{in}, \mathrm{cut} \right\}$.

\begin{figure}[ht!]
\centering

\begin{tabular}{cccccc}
\tikz{\draw[fill=FigYellow] rectangle(1.5ex,1.5ex);} $\in\mathcal{T}^{\mathrm{in}}_h$ &
\tikz{\draw[fill=FigCyan] rectangle(1.5ex,1.5ex);} $\in\mathcal{T}^{\mathrm{cut}}_h$ &
\tikz{\draw[fill=FigPurple] rectangle(1.5ex,1.5ex);} $\in\mathcal{T}_h^{\partial,\mathrm{ag}}$ &
\tikz{\filldraw[draw=FigBlue,fill=FigBlue,line width=1.5] (0,0) circle (0.05);} vertices $\in\mathcal{C}_{h}^{\mathrm{ipf}}$ &
\tikz{\draw[color=FigBlue,line width=1.5] (0,0)--(1.7ex,1.7ex);} edges $\in\mathcal{C}_{h}^{\mathrm{ipf}}$ &
\tikz{\draw[color=black,line width=1.5] (0,0)--(1.7ex,1.7ex);}$\in\partial\Omega$ \\[1em]
\end{tabular}

\begin{subfigure}{0.24\textwidth}
\centering
\includegraphics[width=0.8\textwidth]{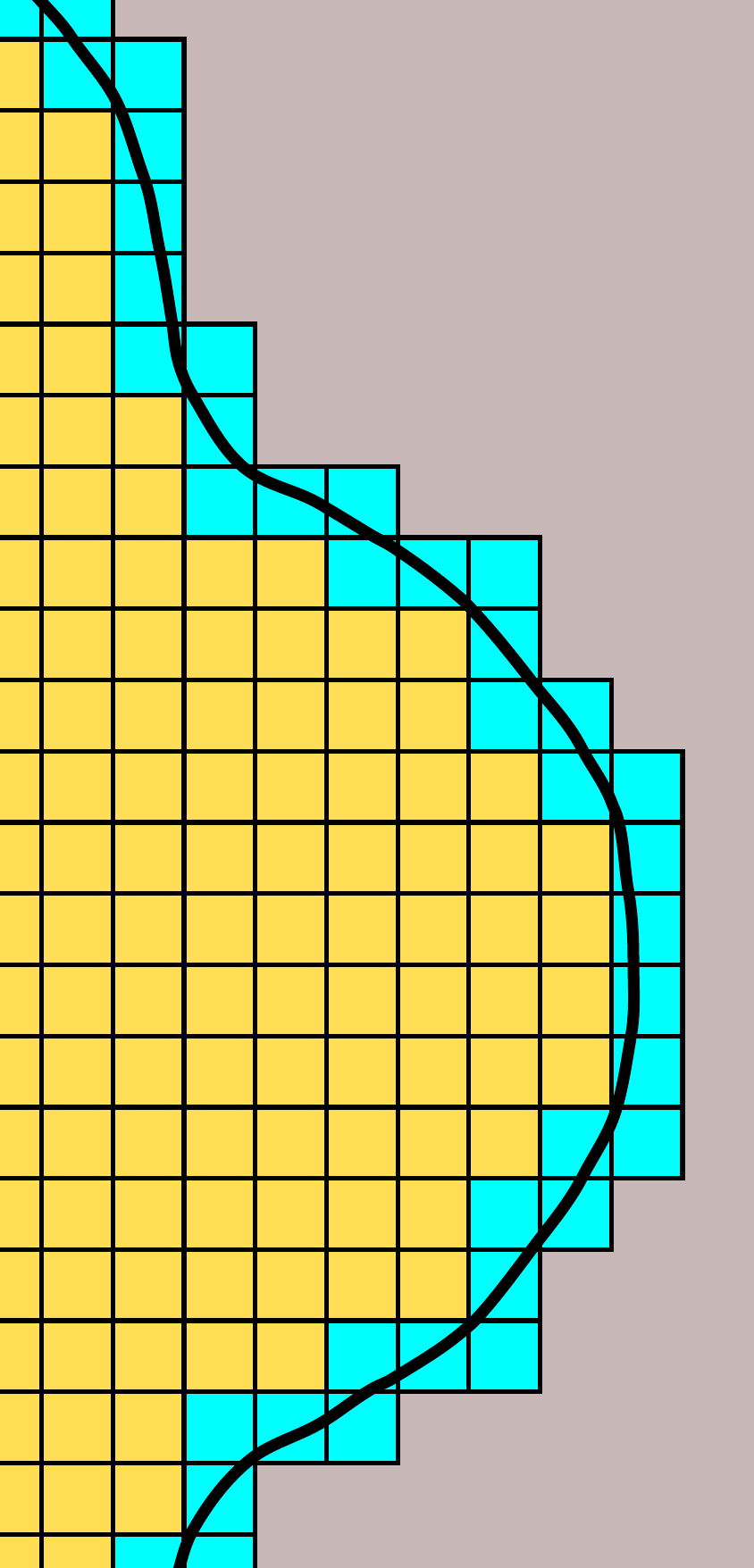}
\caption{}\label{fig:geodefs_a}
\end{subfigure}
\begin{subfigure}{0.24\textwidth}
\centering
\includegraphics[width=0.8\textwidth]{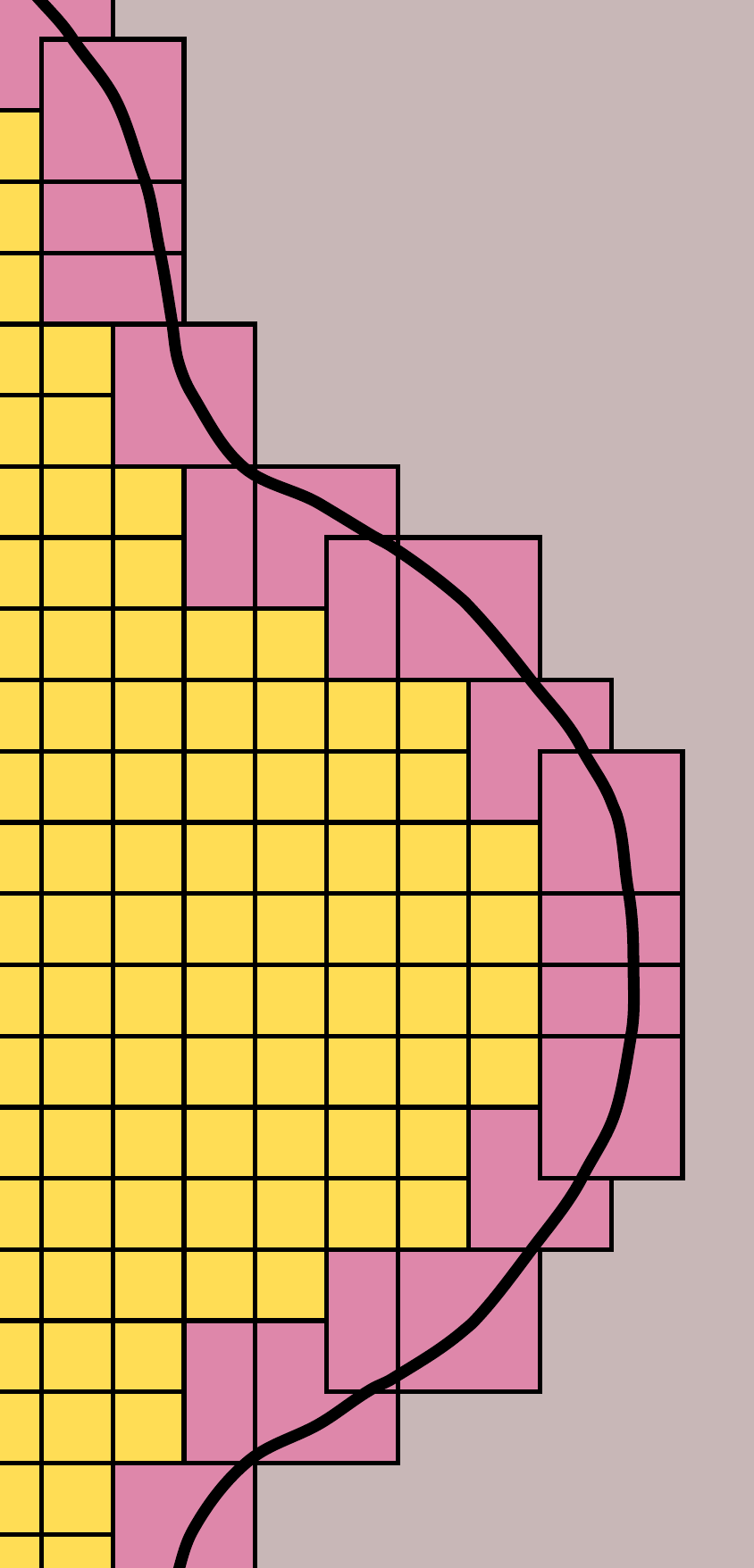}
\caption{}\label{fig:geodefs_b}
\end{subfigure}
\begin{subfigure}{0.24\textwidth}
\centering
\includegraphics[width=0.8\textwidth]{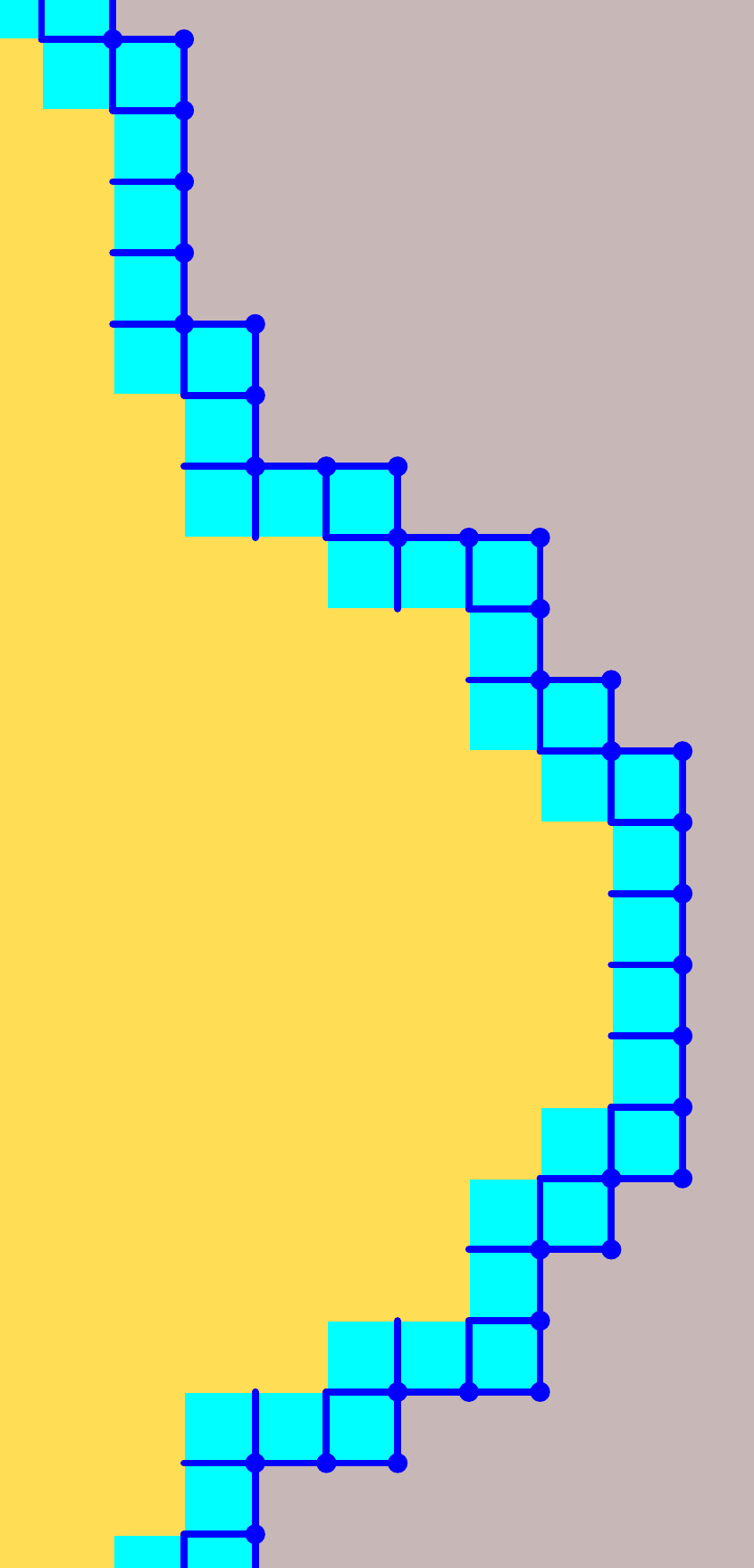}
\caption{}\label{fig:geodefs_c}
\end{subfigure}
\caption{\new{Illustration of the main geometrical sets introduced in Sect.~\ref{sec:cell-aggregation}.}}
\label{fig:geodefs}
\end{figure}

In aggregated unfitted methods, we associate (potentially problematic) cut cells to (fully $\Omega$-supported) interior cells. This leads to the notion of the so-called \emph{aggregates}: Let $\mathcal{T}_{h}^{\mathrm{ag}}$ denote an \emph{aggregated} or \emph{agglomerated} mesh. $\mathcal{T}_{h}^{\mathrm{ag}}$ is the output of a cell aggregation of $\mathcal{T}_{h}^{\mathrm{act}}$. Each aggregate is formed by exactly one interior cell in $\mathcal{T}_{h}^{\mathrm{in}}$, referred to as the \emph{root} cell, and several cut cells in $\mathcal{T}_{h}^{\mathrm{cut}}$, such that each active cell belongs to one, and only one, aggregate. It follows that cell aggregation is only meaningful on the boundary, e.g., interior cells that are not touching any cut cell become singleton aggregates. Hence, let $\mathcal{T}_h^{\partial,\mathrm{ag}} \doteq \mathcal{T}_{h}^{\mathrm{ag}} \setminus \mathcal{T}_h^{\mathrm{in}}$ be the non-trivial aggregates on the boundary, see Fig.~\ref{fig:geodefs_b}. General dimension-independent cell aggregation algorithms are described in~\cite{Badia2018} for conforming meshes, in~\cite{Badia2020Jun} for non-conforming meshes, and in~\cite{Neiva2021} for multiphase problems. Their parallel implementation is covered in~\cite{Verdugo2019}. To guarantee convergence, these algorithms should minimise the aggregate size. In particular, the characteristic size of an aggregated cell must be proportional to the one of its root cell. The goal of cell aggregation is to end up with a new partition of cells, in which all cells (aggregates) have support in $\Omega$ away from zero. The resulting mesh restores \emph{shape-regularity}. More specifically, there exists a constant $\tau > 0$ independent of the mesh size or cut location such that every cell $T \in \mathcal{T}_{h}^{\mathrm{ag}}$ contains a ball or radius $\rho_T$ inside $\Omega$, such that $\frac{h_T}{\rho_T} \leq \tau$.

To conclude with the geometrical definitions, we extend the previous classifications to the \emph{$n$-faces} of $\mathcal{T}_{h}$. Here, \emph{$n$-face} refers to entities in any dimension. For instance, in 3D, 0-faces are vertices, 1-faces are edges, 2-faces are faces and 3-faces are cells. We use \emph{facet} to denote an $n$-face of dimension $d-1$, i.e., an edge in 2D and a face in 3D. According to this, we let $\mathcal{C}_{h}^{\#}$ represent the (simplicial or hexahedral) \emph{exact complex} of $\mathcal{T}_h^\#$  for $\# \in \left\{  \mathrm{act}, \mathrm{in}, \mathrm{cut}, \mathrm{out} \right\}$, i.e., the set of all $n$-faces of cells in $\mathcal{T}^{\#}_{h}$. \new{ The $n$-faces in $\mathcal{C}_{h}^{\mathrm{ipf}}\doteq \mathcal{C}_{h}^{\mathrm{cut}} \setminus \mathcal{C}_{h}^{\mathrm{in}}$, see Fig.~\ref{fig:geodefs_c}, are referred to as \emph{ill-posed $n$-faces}, since the shape functions associated to $n$-faces in $\mathcal{C}_{h}^{\mathrm{ipf}}$ are the only ones that potentially have an arbitrarily small support on $\Omega$ and can lead to arbitrary large condition numbers.}

\section{Problem statement}\label{sec:problem_statement}

Let us consider as a model problem for our presentation the Poisson equation in $\Omega$ with Dirichlet boundary conditions on $\Gamma_{\mathrm{D}} \subset \partial \Omega$ and Neumann boundary conditions on $\Gamma_{\mathrm{N}} \doteq \partial \Omega \setminus \Gamma_{\mathrm{D}}$. After scaling with the diffusion term, the equation reads:
\emph{find} $u \in H^1(\Omega)$ \emph{such that}
\begin{equation}\label{eq:poisson-strong}
	- \boldsymbol{\nabla} \cdot \boldsymbol{\nabla} u = f \quad \text{in} \ H^{-1}(\Omega), \qquad u = g \quad \text{in} \ H^{1/2}(\Gamma_{\mathrm{D}}),  \qquad \boldsymbol{n} \cdot \boldsymbol{\nabla} u = q \quad \text{in} \ H^{-\frac{1}{2}}(\Gamma_{\mathrm{N}}),
\end{equation}
where $f$ is the source term, $g$ is the prescribed value on the Dirichlet boundary and $q$ the prescribed flux on the Neumann boundary. 

The following exposition applies to second-order elliptic equations.
For instance, in the numerical experiments, we also consider the linear elasticity problem: \emph{find} $\boldsymbol{u} \in \boldsymbol{H}^1(\Omega)$ \emph{such that}
\begin{equation}\label{eq:elasticity-strong}
	- \boldsymbol{\nabla} \cdot \boldsymbol{\sigma}(\boldsymbol{u}) = \boldsymbol{f} \quad \text{in} \ \boldsymbol{H}^{-1}(\Omega), \qquad \boldsymbol{u} = \boldsymbol{g} \quad \text{in} \ \boldsymbol{H}^{1/2}(\Gamma_{\mathrm{D}}),  \qquad \boldsymbol{n} \cdot \boldsymbol{\sigma}(\boldsymbol{u}) = \boldsymbol{q} \quad \text{in} \ \boldsymbol{H}^{-\frac{1}{2}}(\Gamma_{\mathrm{N}}),
\end{equation}
where $\boldsymbol{\sigma}, \boldsymbol{\varepsilon} : \Omega \to \mathbb{R}^{d,d}$ are the stress tensor $\boldsymbol{\sigma} (\boldsymbol{u}) = 2 \mu \boldsymbol{\varepsilon}(\boldsymbol{u}) + \lambda \mathrm{tr} (\boldsymbol{\varepsilon}(\boldsymbol{u})) \mathbf{Id}$ and the strain tensor $\boldsymbol{\varepsilon}(\boldsymbol{u}) \doteq \frac{1}{2} (\boldsymbol{\nabla} \boldsymbol{u} + { \boldsymbol{\nabla} \boldsymbol{u}}^T)$; with $\mathbf{Id}$ the identity matrix in $\mathbb{R}^d$. $(\lambda, \mu)$ are the the Lamé coefficients. We assume the Poisson ratio $\nu \doteq \lambda / ( 2 ( \lambda + \mu ) )$ is bounded away from $1/2$, i.e., the material is \emph{compressible}. Since $\lambda = 2\nu \mu / (1 - 2\nu)$, it follows that $\lambda$ is bounded above by $\mu$, i.e., $\lambda \leq C \mu$, for some positive constant $C$.

We turn now to the Galerkin approximation of the Poisson problem~(\ref{eq:poisson-strong}). Let $\mathcal{V}_{h}^{\mathrm{act}}$ be a standard Lagrangian \ac{fe} space on $\mathcal{T}_{h}^{\mathrm{act}}$. As mentioned above, boundary conditions are \emph{weakly} imposed with Nitsche's method~\cite{Badia2018,Schillinger2015,burman_cutfem_2015}. This approach yields a consistent numerical scheme with optimal convergence for arbitrary order \ac{fe} spaces. Hence, the Galerkin approximation to \eqref{eq:poisson-strong} reads: find $u_h \in \mathcal{V}_h^{\rm act}$ such that $a_h(u_h,v_h)= b_h(v_h)$ for any $v_h \in \mathcal{V}_h^{\rm act}$, with
\begin{equation}\label{eq:poisson-weak}
 	\begin{array}{l}
 		\displaystyle {a_h}(u_h,v_h) \doteq \int_{\Omega} \boldsymbol{\nabla} u_h \cdot \boldsymbol{\nabla} v_h \mathrm{\ d}\Omega \ + \int_{\Gamma_{\mathrm{D}}} \left( \tau u_h v_h  - u_h \left( \boldsymbol{n} \cdot \boldsymbol{\nabla} v_h \right ) - v_h \left( \boldsymbol{n} \cdot \boldsymbol{\nabla} u_h \right) \right) \mathrm{\ d}{\Gamma}, \quad \text{and} \\
 		\displaystyle {b_h}(v_h) \doteq \int_{\Omega} v_h f \mathrm{\ d}\Omega \ + \int_{\Gamma_{\mathrm{D}}} \left( \tau v_h g - \left( \boldsymbol{n} \cdot \boldsymbol{\nabla} v_h \right ) g \right) \mathrm{\ d}\Gamma \ + \int_{\Gamma_{\mathrm{N}}}^{} q v_h \ \mathrm{d}\Gamma,
 	\end{array}
\end{equation}
with $\boldsymbol{n}$ being the outward unit normal on $\partial \Omega$ and $\tau > 0$ a large-enough stabilisation parameter, defined shortly. Concerning the linear elasticity problem in~(\ref{eq:elasticity-strong}), the approximation takes the form:
\begin{equation}\label{eq:elasticity-weak}
  \begin{array}{l}
    \displaystyle a_h(\boldsymbol{u}_h,\boldsymbol{v}_h) \doteq \int_{\Omega} \boldsymbol{\sigma}(\boldsymbol{u}_h) : \boldsymbol{\varepsilon}(\boldsymbol{v}_h) \ \mathrm{d} \Omega \ + \int_{\Gamma_{\mathrm{D}}} \left( \tau {\boldsymbol{u}_h} \cdot {\boldsymbol{v}_h} - \boldsymbol{n} \cdot \boldsymbol{\sigma}(\boldsymbol{v}_h) \cdot {\boldsymbol{u}_h} - \boldsymbol{n} \cdot {\boldsymbol{\sigma}(\boldsymbol{u}_h)} \cdot {\boldsymbol{v}_h} \right) \ \mathrm{d} \Gamma  , \\
    \displaystyle b_h(\boldsymbol{v}_h) \doteq \int_{\Omega} \boldsymbol{f} \cdot \boldsymbol{v}_h \ \mathrm{d} \Gamma \ + \int_{\Gamma_{\mathrm{D}}} ( \tau \boldsymbol{g} \cdot {\boldsymbol{v}_h} - \boldsymbol{n} \cdot {\boldsymbol{\sigma}(\boldsymbol{v}_h)} \cdot \boldsymbol{g} ) \ \mathrm{d} \Gamma + \int_{\Gamma_{\mathrm{D}}} \boldsymbol{q} \cdot  \boldsymbol{v}_h \ \mathrm{d} \Gamma.
  \end{array}
\end{equation}

The second terms in all the forms of~(\ref{eq:poisson-weak}) and~(\ref{eq:elasticity-weak}) correspond to the Nitsche terms in charge of the weak imposition of Dirichlet boundary conditions. We observe that the penalty method or a non-symmetric version of Nitsche's method \cite{Freund1995} are common alternatives to the Nitsche method. Nonetheless, the penalty formulation is not weakly consistent for high order methods and the non-symmetric formulation sacrifices symmetry of the discrete system and adjoint consistency.

Stability of the discrete problems above depends upon the $\tau$-dependent property, e.g., for the Poisson problem,
\begin{equation}\label{eq:nistche-condition}
  \begin{array}{l}
    \int_{\Gamma_{\mathrm{D}} \cap T} \left( \tau u_h^2  - 2 u_h
    \left( \boldsymbol{n} \cdot \boldsymbol{\nabla} u_h \right ) \right) \mathrm{\ d}{\Gamma}
    \leq C \int_{\Gamma_{\mathrm{D}} \cap T}^{} \tau u_{h}^{2} \ \mathrm{d} \Gamma + \|\boldsymbol{\nabla}u_h\|^2_{L^2(T)}, \ \ \forall T\in\mathcal{T}_h^{\mathrm{cut}},
  \end{array}
\end{equation}
for some constant $C > 0$ independent of $h_T$. A cell-wise $\tau_T$ that verifies~(\ref{eq:nistche-condition}) can be computed via the solution of a generalised eigenvalue problem~\cite{DePrenter2017}. In shape-regular body-fitted meshes, it is enough to prescribe the value $\tau_T = \beta m^2 h_T^{-1}$, where $\beta$ is a large enough problem-dependent parameter and $m$ is the order of $\mathcal{V}_h^{\rm act}$. For standard unfitted \ac{fe} methods formulated in $\mathcal{V}_h^{\rm act}$ we only have stability over $\|\boldsymbol{\nabla}u_h\|^2_{L^2(T \cap \Omega)}$ in the right-hand side of~(\ref{eq:nistche-condition}). In this case, the minimum value of $\tau_T$ that ensures stability tends to infinity as $|T \cap \Omega | \to 0$. As a result, unfitted \acp{fem}, such as \ac{xfem}, are not robust to cut location (either for boundary or interface problems). 

We can also relate the lack of robustness to the scaling of the condition number in classical vs unfitted \acp{fem}. Classical \ac{fem} approximation theory has long established that the condition number of stiffness matrices associated to FEM approximations of second order elliptic differential equations on body-fitted quasi-uniform meshes scales as $h^{-2}$. The largest eigenvalue of such matrices scales as $h^{d-2}$ and the smallest eigenvalue as $h^{d}$. Their associated eigenvectors are the functions with the highest, resp., lowest frequency possible on the mesh.

Unfitted approximations in $\mathcal{V}_h^{\rm act}$ recover the classical scaling of the largest eigenvalue with $h^{d-2}$, if locally stabilised by solving the generalised eigenvalue problem mentioned above.\footnote{All results in this paragraph are proven in~\cite{DePrenter2017} under two assumptions (1) shape-regularity of cut regions $T \cap \Omega$, $T \in \mathcal{T}_h^{\rm act}$, and (2) the size of the intersection between the unfitted boundary/interface and a cell is bounded by the volume of the cut region.} However, the smallest eigenvalue is bounded above by $h^{d-2} \eta^{2m+1-2/d}$, with $\eta = \min_{T \in \mathcal{T}_h^{\rm act}} \mathrm{meas} (T \cap \Omega)$ \cite{DePrenter2017}, and the associated eigenfunction has support only in a cell with a very small volume fraction. As a result, the condition number of the discrete system scales as $\eta^{-(2m+1-2/d)}$ and arbitrarily high condition numbers occur in practice, since the position of the cuts cannot be controlled and the value of $\eta$ can be arbitrarily close to zero. In addition, by observing the exponential dependence of the scaling rate with the order of approximation $m$, we deduce that the problem becomes especially severe with high-order methods.\footnote{In this work, we consider all cut cells potentially ill-posed. Instead, we could define a parameter $\eta_0 \in (0,1]$ and consider as ill-posed only the cells $T \in \mathcal{T}_{h}^{\mathrm{cut}}$ such that $|T \cap \Omega| / |T| \geq \eta_0$. However, based on the previous bound for the condition number, this approach is much less effective as the order or approximation increases. In particular, medium or large cuts that are not problematic for linear \acp{fe}, can become significantly problematic as $m$ grows.}

In the next section, we introduce \acp{agfem}, which solve the previous stability issues. They achieve this either \emph{strongly}, by considering approximations in $\mathcal{V}_h^{\rm ag}$, instead of $\mathcal{V}_h^{\rm act}$, or \emph{weakly}, by penalising the distance of the approximation in $\mathcal{V}_h^{\rm act}$ w.r.t.~the one in $\mathcal{V}_h^{\rm ag}$. With these methods, we can use the same expression of $\tau_T$ as in body-fitted meshes, $h_T$ being the background cell size. Moreover, we recover standard $h^{-2}$ condition number bounds. We use $A \gtrsim B$ (resp.~$A \lesssim B$) to denote $A \geq C B$ (resp.~$A  \leq CB$) for some positive constant $C$ that does not depend on $h$ and the location of the cell cuts and can only depend on $m$ with a polynomic rate. Proving uniform bounds irrespectively of mesh size $h$, boundary or interface locations, which do not blow up exponentially with $m$, is the driving motivation behind all these methods.

\section{Aggregated finite elements}\label{sec:agfem}

The motivation behind \ac{agfem} is to apply the same cell aggregation ideas in \ac{dg} schemes on unfitted meshes to $\mathcal{C}^0$ Lagrangian finite element spaces. \ac{dg} methods can readily be applied to polytopal meshes, so the scheme can readily be applied to aggregation or agglomeration meshes. Shape regularity of the intersected cells is generally lost after intersection with the boundary or interface, but aggregates are defined in such a way that their region inside the domain is still shape regular. For brevity, we restrict the presentation to \emph{strong} \ac{agfem} methods. We refer to~\cite{badia2021linkAgFEM} for details on their \emph{weak} counterparts.

\subsection{Strong \ac{agfem}}\label{sec:agfem-1}

Let us formalise the \emph{strong} version of \ac{agfem} grounded on the standard (extrapolation-based) discrete extension operator. The definition of $\mathcal{C}^0$ Lagrangian finite element spaces on aggregated meshes has been proposed in \cite{Badia2018}. The underlying idea is to define a new \ac{fe} space that can be expressed in terms of an aggregate-wise \emph{discrete extension} operator $\mathcal{E}^{\mathrm{ag}}_h: \mathcal{V}_{h}^{\mathrm{in}} \longrightarrow \mathcal{V}_{h}^{\mathrm{act}}$.  $\mathcal{E}^{\mathrm{ag}}_h$ extends \ac{fe} functions from the root cells to the cut cells. This definition has two salient properties by construction: the constraints are local and the resolution of the interior cells is preserved, i.e., interior \acp{dof} are not constrained in this process. We note that these two properties are violated by spaces recovered from standard ghost-penalty methods, as the penalty coefficient goes to infinity~\cite{badia2021linkAgFEM}.

The image of this extension is the Ag\ac{fe} space $\mathcal{V}_{h}^{\mathrm{ag}} \subset \mathcal{V}_{h}^{\mathrm{act}}$; $\mathcal{V}_{h}^{\mathrm{ag}}$ can be built by adding constraints to $\mathcal{V}_{h}^{\mathrm{act}}$. The new Ag\ac{fe} space is not affected by the small cut cell problem, since the ill-posed \acp{dof} are constrained by well-posed interior \acp{dof}. It remains to see how the Ag\ac{fe} constraints are formed. Before that, we introduce some auxiliary notation.

As $\mathcal{V}_{h}^{\mathrm{act}}$ is a nodal Lagrangian \ac{fe} space, there exists a one-to-one relation between shape functions, nodes and \acp{dof}. Each node in the mesh can be associated to its \emph{owner}, which is defined as the lowest dimensional $n$-face (e.g., vertex, edge, face, cell) that contains it; we denote this map as $\mathcal{O}_h^{\mathrm{dof}\to \mathrm{nf}}$. \new{Using this notation, we define the set of \emph{ill-posed} \acp{dof} $\mathcal{D}_{h}^{\mathrm{ipd}}$  as the subset of \acp{dof} that are owned by  $n$-faces in $\mathcal{C}_{h}^{\mathrm{ipf}}$, namely, each ill-posed \ac{dof} $\alpha\in \mathcal{D}_{h}^{\mathrm{ipd}}$ is associated with the ill-posed $n$-face $\mathcal{O}_h^{\mathrm{dof}\to \mathrm{nf}}(\alpha)\in \mathcal{C}_{h}^{\mathrm{ipf}}$. }


The definition of the discrete extension operator in the \ac{agfem} requires an ownership map $\mathcal{O}_h^{\mathrm{nf \to ag}}: \mathcal{C}_{h}^{\mathrm{ipf}} \rightarrow \mathcal{T}_{h}^{\partial,\mathrm{ag}}$ from cut/external $n$-faces to aggregates. For inter-aggregate \acp{dof}, we arbitrarily choose one of the touching aggregates, such that the mapping is unique. On the other hand, each aggregate has a unique root cell in $\mathcal{T}_{h}^{\mathrm{in}}$; thus we have the bijection $\mathcal{O}_h^{\mathrm{ag \to in}}$, mapping aggregates to interior root cells, and the inverse map $\mathcal{O}_h^{\mathrm{in \to ag}}$. Composing all these maps, we end up with an \new{ill-posed-\ac{dof}-to-root-cell} map $\mathcal{O}_h^{\mathrm{dof} \to \mathrm{in}}: \mathcal{D}_{h}^{\mathrm{ipd}} \rightarrow \mathcal{T}_{h}^{\mathrm{in}}$, where $\mathcal{O}_h^{\mathrm{dof} \to \mathrm{in}} \doteq \mathcal{O}_h^{\mathrm{ag \to in}} \circ \mathcal{O}_h^{\mathrm{nf \to ag}} \circ \mathcal{O}_h^{\mathrm{dof}\to \mathrm{nf}}$.

Now, let $\mathcal{O}_{h}^{\mathrm{nf} \to \mathrm{dof}}$ be the inverse of  $\mathcal{O}_{h}^{\mathrm{dof} \to \mathrm{nf}}$, i.e., the map that returns the \acp{dof} owned by an $n$-face in $\mathcal{C}_{h}^{\mathrm{act}}$. We also need a \emph{closed} version of this ownership map $\overline{\mathcal{O}}_{h}^{\mathrm{nf} \to \mathrm{dof}}$, which given an $n$-face $C$ in $\mathcal{C}_{h}^{\mathrm{act}}$ returns the owned \acp{dof} of all $n$-faces $C' \in \mathcal{C}_{h}^{\mathrm{act}}$ in the closure of $C$, i.e., $C' \subseteq C$. $\overline{\mathcal{O}}_{h}^{\mathrm{nf} \to \mathrm{dof}}$ is the map that describes the locality of Lagrangian \ac{fe} methods: The only \acp{dof} that are \emph{active} in a cell $T \in \mathcal{T}_{h}^{\mathrm{act}}$ are the ones in $\overline{\mathcal{O}}_{h}^{\mathrm{nf} \to \mathrm{dof}}(T)$. Analogously, only the shape functions associated to these \acp{dof} have support on $T$.

Using the notation introduced above, we can readily define the standard discrete extension operator as follows. An ill-posed \ac{dof} in $\mathcal{D}_{h}^{\mathrm{ipd}}$ is computed as a linear combination of the well-posed \acp{dof} in the closure of the root cell that owns it. Combining previous definitions, the ill-posed-\ac{dof} to well-posed-\acp{dof} map is given by $\mathcal{O}_{h}^{\mathrm{ipd} \to \mathrm{wpd}} \doteq \overline{\mathcal{O}}_{h}^{\mathrm{nf} \to \mathrm{dof}} \circ \mathcal{O}_{h}^{\mathrm{dof} \to \mathrm{in}}$, see Fig.~\ref{fig:defs_maps}. According to this, the constrained value of $\sigma^\alpha \in \mathcal{D}_{h}^{\mathrm{ipd}}$ is
\begin{equation}\label{eq:agfem-constraints}
  \sigma^{\alpha}(\cdot) = 
  \sum_{\sigma^{\beta} \in \mathcal{O}_{h}^{\mathrm{ipd}\to \mathrm{wpd}}(\alpha)} 
  \sigma^{\alpha}(\phi^{\beta}) \sigma^{\beta}(\cdot) = 
  \sum_{\sigma^{\beta} \in \mathcal{O}_{h}^{\mathrm{ipd}\to \mathrm{wpd}}(\alpha)}
  \phi^{\beta}(\boldsymbol{x}^{\alpha}) \sigma^{\beta}(\cdot). 
\end{equation}
We can readily use this expression to extend well-posed \ac{dof} values on interior cells to all ill-posed \ac{dof} values, which only belong to cut cells. Thus, applying ~\eqref{eq:agfem-constraints} to a \ac{fe} function in $\mathcal{V}_{h}^{\mathrm{in}}$ provides the sought-after discrete extension operator $\mathcal{E}_{h}^{\mathrm{ag}}$ and the Ag\ac{fe} space $\mathcal{V}_{h}^{\mathrm{ag}}$.

\begin{figure}[ht!]
\centering
\includegraphics[width=0.75\textwidth]{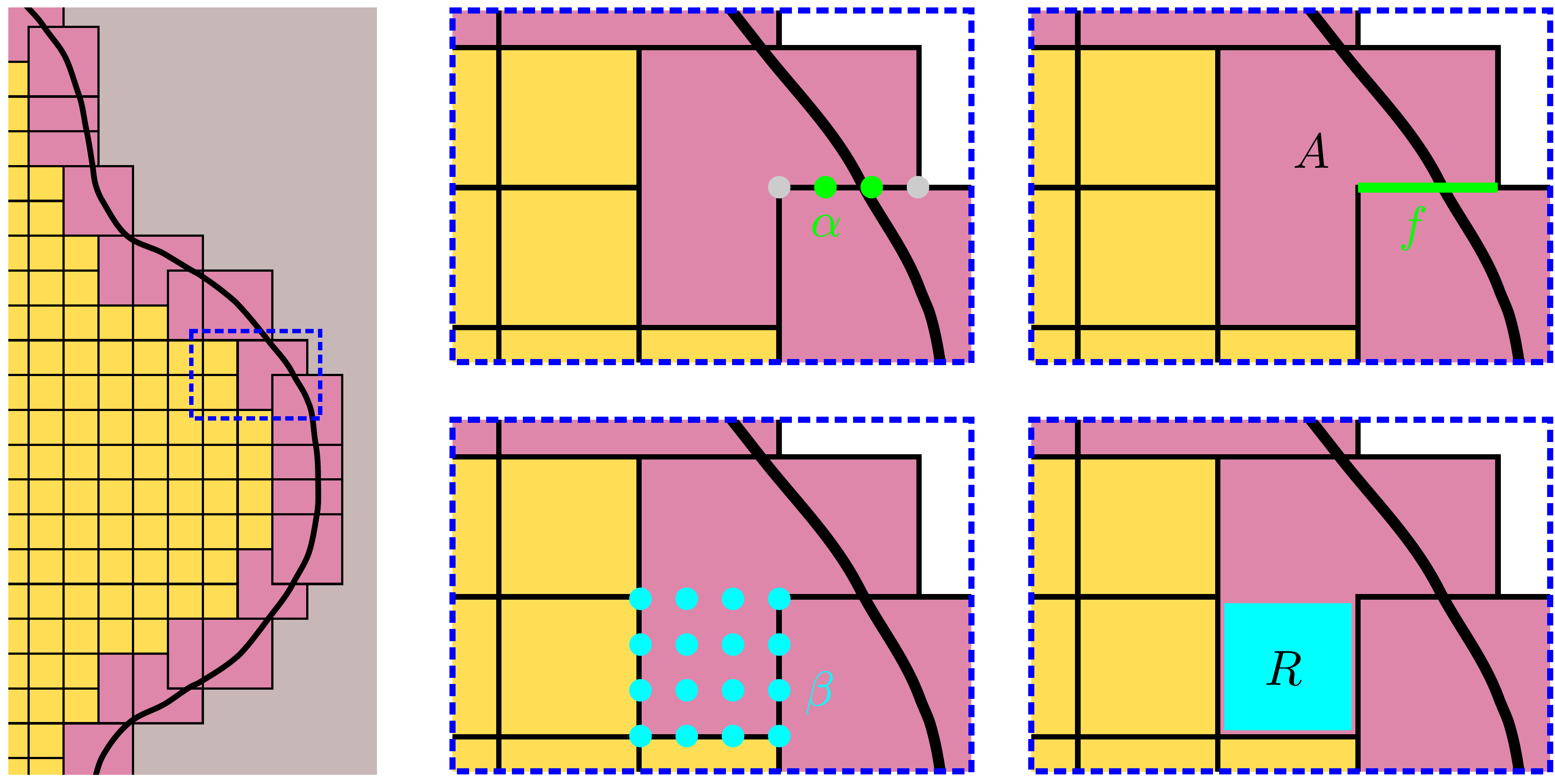}

\caption{\new{Illustration of the main notations defined in  Sect.~\ref{sec:agfem-1} for a third order Lagrangian interpolation. $\alpha\in \mathcal{O}_{h}^{\mathrm{nf} \to \mathrm{dof}}(f)$ is a \ac{dof} (i.e., a node) at the interior of face $f$ and  $\beta\in \overline{\mathcal{O}}_{h}^{\mathrm{nf} \to \mathrm{dof}}(R)$ is a \ac{dof} in the closure of $R$. $\alpha$ and $\beta$ are related by the map that transforms ill-posed \acp{dof} to well-posed ones, namely  $\beta \in \mathcal{O}_{h}^{\mathrm{ipd} \to \mathrm{wpd}}(\alpha)$. The other intermediate maps introduced in Sect.~\ref{sec:agfem-1} are as follows for this example: $f=\mathcal{O}_{h}^{\mathrm{dof} \to \mathrm{nf}}(\alpha)$ is the $n$-face that owns \ac{dof} $\alpha$, $A=\mathcal{O}_{h}^{\mathrm{nf} \to \mathrm{ag}}(f)$ is the aggregate assigned to $f$, and $R=\mathcal{O}_{h}^{\mathrm{ag} \to \mathrm{in}}(A)$ is the root cell of aggregate $A$.}}
\label{fig:defs_maps}
\end{figure}

%
%

Implementation of \ac{agfem} is straightforward, as it simply requires implementing a cell aggregation scheme (upon which to build the maps above) and the imposition of linear constraints~\eqref{eq:agfem-constraints} in the discrete system assembly. Besides, Ag\ac{fem} constraints are cell-local (much simpler than the ones in $h$-adaptive mesh refinement) and the weak form remains unchanged, it is only evaluated at a different \ac{fe} space. The method reads: find $u_h \in \mathcal{V}_{h}^{\mathrm{ag}}$ such that $a_h(u_h,v_h) = b_h(v_h)$ for any $v_h \in \mathcal{V}_{h}^{\mathrm{ag}}$. 

\subsection{Abstract stability and convergence analysis}

The \ac{agfem} relies on a discrete extension operator with the properties in the following definition. In previous works for standard Lagrangian extensions with extrapolation (see, e.g., \cite{badia2021linkAgFEM}), the requirements over the extension were stronger in the sense that the bounded norms were on $\Omega_{h}^{\mathrm{act}}$. On the other side, the constants could increase exponentially with $m$. Thus, the standard method can hardly be used for high order approximations.

In this work, we observe that stability on $\Omega$ is all what is needed in \ac{agfem}. Using this information, we propose a basis that is interpolatory in $\Omega$ and, as a result, estimates do not blow up exponentially with $m$. We will show that the \ac{agfem} with these new bases is suitable for high-order approximations.  

\begin{definition}\label{def:discrete-extension-operator}
  Let $0 \leq s \leq n \leq m$,  where $m$ is the order of $\mathcal{V}_{h}^{\mathrm{in}}$. We denote by $C(m)$ constants that can grow at a polynomial rate with the order $m$. A suitable discrete extension operator $\mathcal{E}_{h}^{\mathrm{ag}}: \mathcal{V}_{h}^{\mathrm{in}} \rightarrow  \mathcal{V}_{h}^{\mathrm{act}}$ must satisfy the following properties: 
  \begin{itemize}
         \item[(i)] Continuity: 
    \begin{equation}\label{eq:discrete-extension-continuity}
		{\| \mathcal{E}_{h}^{\mathrm{ag}}(v_h) \|_{L^2(\Omega
    )} \lesssim \|v_h\|_{L^2(\Omega_{h}^{\mathrm{in}})},} \qquad \| \boldsymbol{\nabla}\mathcal{E}_{h}^{\mathrm{ag}}(v_h) \|_{\boldsymbol{L}^2(\Omega
    )} \lesssim \|\boldsymbol{\nabla}  v_h\|_{\boldsymbol{L}^2(\Omega_{h}^{\mathrm{in}})}, \qquad \forall v_h \in \mathcal{V}_{h}^{\mathrm{in}}.
    \end{equation}
    \item[(ii)] Approximability:
\begin{equation}\label{eq:discrete-extension-approximability}
  \underset{v_h \in \mathcal{V}_{h}^{\mathrm{in}}}{\mathrm{inf}} \| u - \mathcal{E}_{h}^{\mathrm{ag}}(v_h) \|_{H^s(\Omega)} \leq C(m)h^{n-s+1} \| u \|_{H^{n+1}(\Omega)}, \qquad \forall u  \in H^{n+1}(\Omega).
\end{equation}
\end{itemize}
The image $\mathcal{V}_{h}^{\mathrm{ag}} \doteq \mathrm{Im}(\mathcal{E}_{h}^{\mathrm{ag}}) \subset \mathcal{V}_{h}^{\mathrm{act}}$ is a suitable Ag\ac{fe} space. \new{Even though functions in $\mathcal{V}_{h}^{\mathrm{ag}}$ are defined in $\Omega_{h}^{\mathrm{act}}$, we only consider their restriction to $\Omega$ in the following exposition.}    
\end{definition}
\new{We note that the stability bounds in (\ref{eq:discrete-extension-continuity}) are essential to bound the condition number of the resulting matrix (see, e.g., \cite[Corollary 5.9]{Badia2018}).} 

In the next proposition, we show that in fact these properties lead to the desired continuity and stability results. In order to treat the boundary terms due to Nitsche, we make use of a trace inequality. Given a domain $\omega$ with Lipschitz boundary, the following trace inequality holds (see, e.g., \cite[Th.~1.6.6]{Brenner1994}):
\begin{equation}\label{eq:tr-ineq}
\|u\|^{2}_{L^2(\partial \omega)} \leq C_\omega \|u\|_{L^2(\omega)} \|u\|_{H^1(\omega)}, \qquad u \in H^1(\omega).
\end{equation}
The constant $C_\omega$ depends only on the shape of $\omega$. We can safely use this expression at the aggregate level for $A \cap \Omega$, $A \in \mathcal{T}_{h}^{\mathrm{ag}}$ since the aggregate is shape regular by construction. 

We also need an inverse inequality for aggregates. Given $U \in \mathcal{T}_{h}^{\partial,\mathrm{ag}}$, we have that $\rho h_U \leq \mathrm{diam}(\Omega \cap U) \leq h_U$ , where $h_U$ is a characteristic mesh size and $\rho \ \new{\geq} \ C > 0$ . This is a result of the geometrical construction of aggregates, i.e., the size of the aggregate is uniformly bounded by the root size and the root size is shape-regular. Thus, one can use the inverse inequality in \cite[Lemma 4.5.3]{Brenner1994} to get the following inequality: for any $u_h \in \mathcal{V}_{h}^{\mathrm{ag}}$, it holds 
\begin{equation}\label{eq:inv-ineq}
\| u_h \|_{H^1(\Omega \cap U)} \le C h^{-1}_U \|u_h \|_{L^2(\Omega \cap U)}, \qquad  \forall U \in \mathcal{T}_{h}^{\partial, \mathrm{ag}}. 
\end{equation} 
The proof relies on the shape regularity properties mentioned above, the equivalence of discrete norms and scaling arguments. For interior cells, we recover the standard inverse inequality.

\begin{remark}
We note that this is not the situation for ghost penalty methods~\cite{burman2010ghost} or weak \ac{agfem} methods~\cite{badia2021linkAgFEM}. In these schemes, cut cells do not have the required shape regularity and one must use inverse inequalities that require control over the whole cut cell~\cite{hansbo2002unfitted}. 
\end{remark}

\begin{proposition}\label{prop:well-posedness-deo}
  Let $\mathcal{E}_{h}^{\mathrm{ag}}$ satisfy Def.~\ref{def:discrete-extension-operator}. Let $\mathcal{V}^{\mathrm{ag}}(h) \doteq H^2(\Omega) + \mathcal{V}_{h}^{\mathrm{ag}}$, endowed
  with the norm
  \begin{equation}\label{eq:norm-agfem}
    {\tnor{v}^{2}_{\mathcal{V}^{\mathrm{ag}}(h)} \doteq \|\boldsymbol{\nabla}v\|^{2}_{\boldsymbol{L}^2(\Omega)} +  \| \tau^{\frac{1}{2}} v \|_{L^2(\Gamma_{\mathrm{D}})}^2 
    + \sum_{T \in \mathcal{T}_{h}^{\mathrm{act}}}^{} h_T^{2} \| v \|_{H^2(T \cap \Omega)}^{2}, \qquad \forall v \in \mathcal{V}^{\mathrm{ag}}(h).}
    \end{equation}
  It holds:
  \begin{align}\label{eq:deo-coercive-continuous}
    a_h(u_h,u_h) \gtrsim \| u_h \|^2_{\mathcal{V}^{\mathrm{ag}}(h)}, \qquad
    a_h(u,v_h) \lesssim \|u\|_{\mathcal{V}^{\mathrm{ag}}(h)} \|v_h\|_{\mathcal{V}^{\mathrm{ag}}(h)},
    \end{align} 
    for any $u_h, v_h \in  \mathcal{V}_{h}^{\mathrm{ag}}$, $u \in \mathcal{V}^{\mathrm{ag}}(h)$. Thus, there is a unique 
  \begin{equation}\label{eq:deo-weak-poisson}
    u_h \in \mathcal{V}_h^{\rm ag} \ : \ a_h(u_h,v_h) = b(v_h), \quad \forall v_h \in \mathcal{V}_{h}^{\mathrm{ag}}.
  \end{equation}
\end{proposition}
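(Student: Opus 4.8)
The plan is to establish the two bounds in \eqref{eq:deo-coercive-continuous} — coercivity and continuity of $a_h$ in the $\|\cdot\|_{\mathcal{V}^{\mathrm{ag}}(h)}$-norm — and then to obtain the unique solvability of \eqref{eq:deo-weak-poisson} from the Lax--Milgram lemma (on the finite-dimensional space $\mathcal{V}_{h}^{\mathrm{ag}}$, coercivity of $a_h$ already suffices for invertibility). The argument only uses that $\mathcal{V}_{h}^{\mathrm{ag}}\subset\mathcal{V}_{h}^{\mathrm{act}}$ is a space of $\mathcal{C}^{0}$ piecewise polynomials of degree $m$, the shape-regularity of $\Omega\cap A$ for boundary aggregates $A\in\mathcal{T}_{h}^{\partial,\mathrm{ag}}$, the trace inequality \eqref{eq:tr-ineq} and the inverse inequality \eqref{eq:inv-ineq}, plus — for the third term of the norm — the stability \eqref{eq:discrete-extension-continuity} of the extension; the abstract properties of Def.~\ref{def:discrete-extension-operator} do not otherwise enter here. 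Two points must be watched throughout: every trace/inverse manipulation near $\Gamma_{\mathrm{D}}$ has to be carried out on the surrounding aggregate $\Omega\cap A$, not on the raw cut region $T\cap\Omega$ (which need not be shape-regular), and the penalty is kept at the body-fitted value $\tau_{T}=\beta m^{2}h_{T}^{-1}$, so the hidden constants grow only polynomially with $m$.

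\textbf{Coercivity.} Write $a_h(u_h,u_h)=\|\boldsymbol{\nabla}u_h\|_{\boldsymbol{L}^2(\Omega)}^{2}+\int_{\Gamma_{\mathrm{D}}}(\tau u_h^{2}-2u_h\,\boldsymbol{n}\cdot\boldsymbol{\nabla}u_h)\,\mathrm{d}\Gamma$ and estimate the Nitsche cross term aggregate by aggregate. On a boundary aggregate $A$, Young's inequality gives $2|u_h\,\boldsymbol{n}\cdot\boldsymbol{\nabla}u_h|\le\epsilon\,\tau u_h^{2}+\epsilon^{-1}\tau^{-1}|\boldsymbol{\nabla}u_h|^{2}$ on $\Gamma_{\mathrm{D}}\cap A$, and a discrete trace inequality on the shape-regular set $\Omega\cap A$ — obtained as in \eqref{eq:tr-ineq}--\eqref{eq:inv-ineq} by norm equivalence and scaling — yields $\|\boldsymbol{\nabla}u_h\|_{\boldsymbol{L}^2(\Gamma_{\mathrm{D}}\cap A)}^{2}\le C\,h_A^{-1}\|\boldsymbol{\nabla}u_h\|_{\boldsymbol{L}^2(\Omega\cap A)}^{2}$; since $\tau^{-1}\sim h_A/(\beta m^{2})$ this contributes at most $C(\beta m^{2})^{-1}\|\boldsymbol{\nabla}u_h\|_{\boldsymbol{L}^2(\Omega\cap A)}^{2}$. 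Summing over the mutually disjoint aggregates (and using the classical estimates on interior cells), then choosing $\epsilon$ small and $\beta$ large — a choice independent of $h$ and of the cut positions — absorbs these terms into $a_h(u_h,u_h)$ and gives $a_h(u_h,u_h)\gtrsim\|\boldsymbol{\nabla}u_h\|_{\boldsymbol{L}^2(\Omega)}^{2}+\|\tau^{1/2}u_h\|_{L^2(\Gamma_{\mathrm{D}})}^{2}$, i.e., control of the first two terms of \eqref{eq:norm-agfem}. For the third term, $\sum_{T\in\mathcal{T}_h^{\mathrm{act}}}h_T^{2}\|u_h\|_{H^2(T\cap\Omega)}^{2}$, I would use that on each cell $u_h$ is a single polynomial and bound it, via the inverse inequality \eqref{eq:inv-ineq} (and its classical counterpart on interior cells) together with the stability \eqref{eq:discrete-extension-continuity} of the extension, by $C\|u_h\|_{L^2(\Omega)}^{2}$ — the step where it is essential that the extension does not grow exponentially with $m$ — and then invoke the Poincaré--Friedrichs inequality with the $\Gamma_{\mathrm{D}}$-boundary term, $\|u_h\|_{L^2(\Omega)}\lesssim\|\boldsymbol{\nabla}u_h\|_{\boldsymbol{L}^2(\Omega)}+\|\tau^{1/2}u_h\|_{L^2(\Gamma_{\mathrm{D}})}$ (valid because $\Gamma_{\mathrm{D}}$ has positive measure), to absorb it into the two terms already controlled. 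This yields $\|u_h\|_{\mathcal{V}^{\mathrm{ag}}(h)}^{2}\lesssim a_h(u_h,u_h)$.

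\textbf{Continuity.} For $u\in\mathcal{V}^{\mathrm{ag}}(h)$ and $v_h\in\mathcal{V}_{h}^{\mathrm{ag}}$, I would bound $a_h(u,v_h)$ term by term with Cauchy--Schwarz. The bulk term is $\le\|\boldsymbol{\nabla}u\|_{\boldsymbol{L}^2(\Omega)}\|\boldsymbol{\nabla}v_h\|_{\boldsymbol{L}^2(\Omega)}$ and the penalty term is $\le\|\tau^{1/2}u\|_{L^2(\Gamma_{\mathrm{D}})}\|\tau^{1/2}v_h\|_{L^2(\Gamma_{\mathrm{D}})}$. The term $\int_{\Gamma_{\mathrm{D}}}u\,\boldsymbol{n}\cdot\boldsymbol{\nabla}v_h$ is split as $\|\tau^{1/2}u\|_{L^2(\Gamma_{\mathrm{D}})}\,\|\tau^{-1/2}\boldsymbol{\nabla}v_h\|_{\boldsymbol{L}^2(\Gamma_{\mathrm{D}})}$, and the last factor is $\lesssim\|\boldsymbol{\nabla}v_h\|_{\boldsymbol{L}^2(\Omega)}$ by the same aggregate-wise discrete trace inequality used in the coercivity proof. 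The only term needing the third component of the norm is $\int_{\Gamma_{\mathrm{D}}}v_h\,\boldsymbol{n}\cdot\boldsymbol{\nabla}u$: bounding it by $\|\tau^{1/2}v_h\|_{L^2(\Gamma_{\mathrm{D}})}\,\|\tau^{-1/2}\,\boldsymbol{n}\cdot\boldsymbol{\nabla}u\|_{L^2(\Gamma_{\mathrm{D}})}$, applying the trace inequality \eqref{eq:tr-ineq} to $\boldsymbol{\nabla}u$ on each $\Omega\cap A$ followed by Young's inequality, and summing over aggregates gives $\|\tau^{-1/2}\,\boldsymbol{n}\cdot\boldsymbol{\nabla}u\|_{L^2(\Gamma_{\mathrm{D}})}^{2}\lesssim\|\boldsymbol{\nabla}u\|_{\boldsymbol{L}^2(\Omega)}^{2}+\sum_{T}h_T^{2}\|u\|_{H^2(T\cap\Omega)}^{2}=\|u\|_{\mathcal{V}^{\mathrm{ag}}(h)}^{2}$ (for the discrete part of $u$, one may instead reduce the $H^2$-contribution to lower-order norms via \eqref{eq:inv-ineq}, as in the coercivity step). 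Collecting the four bounds gives the continuity estimate in \eqref{eq:deo-coercive-continuous}, and Lax--Milgram on $\mathcal{V}_{h}^{\mathrm{ag}}$ then produces the unique $u_h$ of \eqref{eq:deo-weak-poisson}.

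\textbf{Main obstacle.} Conceptually the proof is routine; the work is in the bookkeeping. One must make sure that every boundary trace/inverse estimate is taken on the shape-regular aggregate $\Omega\cap A$ rather than on the cut region $T\cap\Omega$, and that the cell-to-aggregate (and cell-to-patch) overlaps are uniformly bounded so the aggregate-wise sums telescope back to global norms on $\Omega$. The genuinely delicate point — and the reason for the rest of the paper — is the control of $\sum_{T}h_T^{2}\|u_h\|_{H^2(T\cap\Omega)}^{2}$ for $u_h\in\mathcal{V}_{h}^{\mathrm{ag}}$ with a constant growing only polynomially in $m$: this is where the stability \eqref{eq:discrete-extension-continuity} of the extension is used, and it is exactly what the interpolatory high-order construction developed later in the article is designed to deliver (a purely extrapolative basis would make this constant grow exponentially in $m$).
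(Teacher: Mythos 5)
Your proposal is correct and follows essentially the same route as the paper's proof: aggregate-wise control of the Nitsche terms using the trace inequality \eqref{eq:tr-ineq} and the inverse inequality \eqref{eq:inv-ineq} on the shape-regular sets $\Omega \cap U$, absorption of the cross term with $\tau_T = \beta m^2 h_T^{-1}$ for $\beta$ large enough independent of the cut location, analogous trace/inverse bounds for continuity, and coercivity on the finite-dimensional space for unique solvability (you are in fact more explicit than the paper about the $H^2$-term of the norm \eqref{eq:norm-agfem} and the term $\int_{\Gamma_{\mathrm{D}}} v_h\, \boldsymbol{n}\cdot\boldsymbol{\nabla} u$). One wording fix in the coercivity step: $\sum_T h_T^2 \| u_h \|^2_{H^2(T\cap\Omega)}$ should be bounded by $\|u_h\|^2_{H^1(\Omega)}$ via the aggregate-wise inverse estimates (the $H^2$-to-$H^1$ analogue of \eqref{eq:inv-ineq}; the stability \eqref{eq:discrete-extension-continuity} plays no role here, consistent with the fact that the paper's own proof never actually invokes Def.~\ref{def:discrete-extension-operator}), not by $C\|u_h\|^2_{L^2(\Omega)}$ with $h$-independent $C$ — that intermediate claim would cost a factor $h^{-2}$ — and the Poincaré--Friedrichs absorption you invoke then closes the argument exactly as you describe.
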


\begin{proof}
  Let us consider the Poisson equation in (\ref{eq:poisson-weak}) since the proof is analogous for the elasticity problem in (\ref{eq:elasticity-weak}). In order to prove coercivity, we take $v_h = u_h$ in (\ref{eq:poisson-weak}). The only terms that require some elaboration are the Nitsche terms, which can be expressed as the sum of aggregate-wise contributions. We can now bound these terms at each aggregate $U\in\mathcal{T}_h^{\partial,\mathrm{ag}}$ as follows:
  \begin{align}\label{eq:nistche-condition-2}
      \int_{\Gamma_{\mathrm{D}} \cap U} & \left( \tau u_h^2  - 2 u_h
      \left( \boldsymbol{n} \cdot \boldsymbol{\nabla} u_h \right ) \right) \mathrm{\ d}{\Gamma} 
      \gtrsim \|\tau^{\frac{1}{2}} u_h\|^2_{L^2(\Gamma_{\mathrm{D}}\cap U)} - \|u_h\|_{L^2(\Gamma_{\mathrm{D}}\cap U)} \|\boldsymbol{\nabla}u_h\|_{\boldsymbol{L}^2(\Gamma_{\mathrm{D}}\cap U)} \\
      & \gtrsim \|\tau^{\frac{1}{2}} u_h\|^2_{L^2(\Gamma_{\mathrm{D}}\cap U)} - \|u_h\|_{L^2(\Gamma_{\mathrm{D}}\cap U)} \|\boldsymbol{\nabla}u_h\|^{\frac{1}{2}}_{\boldsymbol{L}^2(\Omega\cap U)} \|\boldsymbol{\nabla}u_h\|^{\frac{1}{2}}_{\boldsymbol{H}^1(\Omega\cap U)} \\
      & \gtrsim \|\tau^{\frac{1}{2}} u_h\|^2_{L^2(\Gamma_{\mathrm{D}}\cap U)} - \xi^{-1} h_U^{-1} \| u_h\|^2_{L^2(\Gamma_{\mathrm{D}}\cap U)}- \xi h_U \|\boldsymbol{\nabla}u_h\|_{\boldsymbol{L}^2(\Omega\cap U)} \|\boldsymbol{\nabla}u_h\|_{\boldsymbol{H}^1(\Omega\cap U)}\\
      & \gtrsim \|\tau^{\frac{1}{2}} u_h\|^2_{L^2(\Gamma_{\mathrm{D}}\cap U)} - \xi^{-1} h_U^{-1}\| u_h\|^2_{L^2(\Gamma_{\mathrm{D}}\cap U)} - \xi \|\boldsymbol{\nabla}u_h\|^2_{\boldsymbol{L}^2(\Omega \cap U)}.
  \end{align}
  In order to obtain this bound, we have used a generalised Young inequality for an arbitrary $\xi >0$, the trace inequality (\ref{eq:tr-ineq}) on $U$ and an inverse inequality at the cells $K' \in \mathcal{T}_{h}^{\mathrm{cut}}$ that belong to $U$. We can now combine this stability with the one that comes from the Galerkin terms. Choosing $\xi$ small enough, the last term can be bounded by the Galerkin control over $\|\boldsymbol{\nabla}u_h\|^2_{\boldsymbol{L}^2(\Omega \cap T)}$, $T \in \mathcal{T}_{h}^{\mathrm{act}}$. In order to absorb the second term by the first one, $\tau$ must be \emph{large enough}. In particular, at each cell of the aggregate, it must hold that $\tau_T > \xi^{-1} h_U^{-1}$ for any $T \in \mathcal{T}_{h}^{\mathrm{cut}}$, $T \subset U$. The standard expression $\tau_T = \beta m^2 h_T^{-1}$ ensures the required stability for a \emph{large enough} $\beta$ independent of the cut location, provided that the ratio between the aggregate size and root cell is bounded and the mesh is quasi-uniform.\footnote{The minimum value of $\tau_T$ can alternatively be computed using an aggregate-wise local eigenvalue problem.} The proof of continuity requires to bound the Nitsche terms using analogous arguments and make use of the inverse inequality (\ref{eq:inv-ineq}). These results lead to the well-posedness of the problem.
\end{proof}

\subsection{Abstract condition number analysis}

Let us denote with $\| \cdot \|_2$ the Euclidean norm of a vector.
\begin{definition}\label{def:mass-bounds}
Given a function $u_h \in \mathcal{V}^{\mathrm{in}}_{h}$ and its nodal vector $\mathbf{u}$, a suitable high-order basis for $\mathcal{V}^{\mathrm{in}}_{h}$ must satisfy 
\begin{equation}\label{eq:mass_matrix_bounds}
  \lambda_{\mathrm{min},M} h^d \| \mathbf{u} \|^2_{2} \leq \| \mathcal{E}_h^{\mathrm{ag}}(u_h) \|^2_{L^2(\Omega)} \leq \lambda_{\mathrm{max},M} h^d \|\mathbf{u}\|_{2}^{2},
\end{equation}
\new{for eigenvalues $(\lambda_{\mathrm{min},M},\lambda_{\mathrm{max},M})$ such that the ratio $\lambda_{\mathrm{max},M}/\lambda_{\mathrm{min},M}$ can only depend linearly with $m$.} 
\end{definition}
We note that for a standard Lagrangian (nodal) basis the lower bound is straightforward since it simply relies on the linear independence of the Lagrangian basis in interior cells~\cite{elman2014finite}. However, the bound for the maximum eigenvalue uses the norm of the discrete extension operator, and thus, grows exponentially with $m$. Thus, we cannot prove the upper bound for this basis. See~\cite{Badia2018} for more details. In the next section, we will propose a basis that satisfies this condition.

\begin{corollary}\label{cor-cond-numb}
Let us consider a basis for $\mathcal{V}^{\mathrm{in}}_{h}$ that satisfies (\ref{eq:mass_matrix_bounds}). The condition number of the system matrix $\boldsymbol{A}_h$   that arises from the bilinear form in (\ref{eq:poisson-weak}) and (\ref{eq:elasticity-weak}) satisfies $\kappa(\boldsymbol{A}_h) \lesssim h^{-2}$.   
\end{corollary}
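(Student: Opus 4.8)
The plan is to run the standard spectral argument for the condition number of a symmetric positive definite stiffness matrix, now with the role of the finite element space played by $\mathcal{V}_h^{\mathrm{ag}}$ parametrised through $\mathcal{E}_h^{\mathrm{ag}}$. Writing $\{\phi^i\}$ for the chosen basis of $\mathcal{V}_h^{\mathrm{in}}$, the entries of $\boldsymbol{A}_h$ are $a_h(\mathcal{E}_h^{\mathrm{ag}}(\phi^i),\mathcal{E}_h^{\mathrm{ag}}(\phi^j))$, so that $\mathbf{u}^{\top}\boldsymbol{A}_h\mathbf{u}=a_h(\mathcal{E}_h^{\mathrm{ag}}(u_h),\mathcal{E}_h^{\mathrm{ag}}(u_h))$ for any $u_h\in\mathcal{V}_h^{\mathrm{in}}$ with nodal vector $\mathbf{u}$. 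By Proposition~\ref{prop:well-posedness-deo} the form $a_h$ is symmetric, coercive and continuous on $\mathcal{V}_h^{\mathrm{ag}}$, hence $\boldsymbol{A}_h$ is symmetric positive definite and $\kappa(\boldsymbol{A}_h)=\lambda_{\max}(\boldsymbol{A}_h)/\lambda_{\min}(\boldsymbol{A}_h)$, with the extreme eigenvalues obtained as the supremum and infimum over $u_h\in\mathcal{V}_h^{\mathrm{in}}\setminus\{0\}$ of the Rayleigh quotient $a_h(\mathcal{E}_h^{\mathrm{ag}}(u_h),\mathcal{E}_h^{\mathrm{ag}}(u_h))/\|\mathbf{u}\|_2^2$. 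It is therefore enough to prove $\lambda_{\max}(\boldsymbol{A}_h)\lesssim\lambda_{\mathrm{max},M}\,h^{d-2}$ and $\lambda_{\min}(\boldsymbol{A}_h)\gtrsim\lambda_{\mathrm{min},M}\,h^{d}$, and then to divide, using that by Definition~\ref{def:mass-bounds} the ratio $\lambda_{\mathrm{max},M}/\lambda_{\mathrm{min},M}$ grows at most linearly in $m$ and is thus an admissible constant.

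For the largest eigenvalue I would chain three estimates. First, the continuity bound in \eqref{eq:deo-coercive-continuous} gives $a_h(\mathcal{E}_h^{\mathrm{ag}}(u_h),\mathcal{E}_h^{\mathrm{ag}}(u_h))\lesssim\|\mathcal{E}_h^{\mathrm{ag}}(u_h)\|_{\mathcal{V}^{\mathrm{ag}}(h)}^2$. Second, an inverse estimate of the form $\|w_h\|_{\mathcal{V}^{\mathrm{ag}}(h)}\lesssim h^{-1}\|w_h\|_{L^2(\Omega)}$ for every $w_h\in\mathcal{V}_h^{\mathrm{ag}}$: on each boundary aggregate $U\in\mathcal{T}_h^{\partial,\mathrm{ag}}$ the gradient term and the $h_T^{2}$-weighted $H^2$ terms are handled by iterating the aggregate inverse inequality \eqref{eq:inv-ineq} over $\Omega\cap U$, while the Nitsche term $\|\tau^{1/2}w_h\|_{L^2(\Gamma_{\mathrm{D}}\cap U)}^2$ is controlled by combining $\tau_T\simeq h_T^{-1}$, the trace inequality \eqref{eq:tr-ineq} on $U$, and again \eqref{eq:inv-ineq}; for interior (singleton) cells the classical inverse inequality applies. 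Summing the squared local bounds over all aggregates reconstructs $h^{-2}\|w_h\|_{L^2(\Omega)}^2$ on the right. Third, the right inequality in \eqref{eq:mass_matrix_bounds} turns this into $\mathbf{u}^{\top}\boldsymbol{A}_h\mathbf{u}\lesssim h^{-2}\lambda_{\mathrm{max},M}h^{d}\|\mathbf{u}\|_2^2$, which is the claimed bound on $\lambda_{\max}(\boldsymbol{A}_h)$.

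For the smallest eigenvalue I would start from coercivity in \eqref{eq:deo-coercive-continuous}, which yields $a_h(\mathcal{E}_h^{\mathrm{ag}}(u_h),\mathcal{E}_h^{\mathrm{ag}}(u_h))\gtrsim\|\mathcal{E}_h^{\mathrm{ag}}(u_h)\|_{\mathcal{V}^{\mathrm{ag}}(h)}^2$, and then observe that the $\mathcal{V}^{\mathrm{ag}}(h)$-norm controls the plain $L^2(\Omega)$-norm via a Poincaré--Friedrichs inequality acting on the gradient term together with the $\tau$-weighted Dirichlet trace term (using $\tau^{1/2}\gtrsim 1$, which is harmless since $\tau_T=\beta m^2 h_T^{-1}$). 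Hence $\mathbf{u}^{\top}\boldsymbol{A}_h\mathbf{u}\gtrsim\|\mathcal{E}_h^{\mathrm{ag}}(u_h)\|_{L^2(\Omega)}^2$, and the left inequality in \eqref{eq:mass_matrix_bounds} gives $\mathbf{u}^{\top}\boldsymbol{A}_h\mathbf{u}\gtrsim\lambda_{\mathrm{min},M}h^{d}\|\mathbf{u}\|_2^2$, i.e.\ $\lambda_{\min}(\boldsymbol{A}_h)\gtrsim\lambda_{\mathrm{min},M}h^{d}$. Dividing the two one-sided bounds and absorbing $\lambda_{\mathrm{max},M}/\lambda_{\mathrm{min},M}$ into the generic constant closes the argument.

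The main obstacle is the AgFE inverse estimate $\|w_h\|_{\mathcal{V}^{\mathrm{ag}}(h)}\lesssim h^{-1}\|w_h\|_{L^2(\Omega)}$. In contrast with the body-fitted case, all norms on the right must live on $\Omega$ (and $\Gamma_{\mathrm{D}}$) rather than on the full active mesh $\Omega_h^{\mathrm{act}}$, so on a badly cut aggregate one has to transfer the estimate from the possibly tiny — but, by construction, shape-regular — physical region $\Omega\cap U$; the delicate point is that $\|w_h\|_{L^2(\Omega\cap U)}$ genuinely dominates every discrete norm of $w_h$ on the whole aggregate, which rests on the shape regularity of $\Omega\cap U$ guaranteed by the aggregation scheme and on the equivalence of discrete norms on aggregates, exactly as in the proof of \eqref{eq:inv-ineq}. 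Everything else — the Rayleigh-quotient bookkeeping, the Young and trace manipulations for the Nitsche term, and the assembly of local into global bounds — is routine, so the statement can alternatively be obtained by invoking the abstract condition number result of \cite{Badia2018} (see the remark after Definition~\ref{def:discrete-extension-operator}).
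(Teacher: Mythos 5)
Your proposal is correct and follows essentially the same route as the paper: Rayleigh quotients for the extreme eigenvalues, coercivity/continuity from Proposition~\ref{prop:well-posedness-deo}, the aggregate-wise inverse and trace inequalities \eqref{eq:inv-ineq}, \eqref{eq:tr-ineq} to get $\tnor{u_h}^2_{\mathcal{V}^{\mathrm{ag}}(h)}\lesssim h^{-2}\|u_h\|^2_{L^2(\Omega)}$, the two-sided mass-matrix bounds \eqref{eq:mass_matrix_bounds}, and a Poincar\'e--Friedrichs inequality for the lower bound. Your more explicit discussion of how the inverse estimate is assembled aggregate by aggregate is just an expanded version of the step the paper states in one line, so there is nothing substantively different to flag.
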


\begin{proof}
The eigenvalues of $\boldsymbol{A}_h$ can be expressed in terms of Rayleigh quotients
\begin{align}\label{eq:max-eig}
\lambda_{\mathrm{max}}(\boldsymbol{A}_h) & = \underset{\mathbf{u} \in \mathbb{R}^n}{\mathrm{sup}} \frac{\mathbf{u}^T \boldsymbol{A}_h \mathbf{u}}{\|\mathbf{u}\|_2} = \underset{u_h \in \mathcal{V}_{h}^{\mathrm{ag}}}{\mathrm{sup}} \frac{a(u_h,u_h)}{\|u_h\|_{\mathcal{V}^{\mathrm{ag}}(h)}}\frac{\|u_h\|_{\mathcal{V}^{\mathrm{ag}}(h)}}{\|\mathbf{u}\|_2}, \\
\label{eq:min-eig}
\lambda_{\mathrm{min}}(\boldsymbol{A}_h) &= \underset{\mathbf{u} \in \mathbb{R}^n}{\mathrm{inf}} \frac{\mathbf{u}^T \boldsymbol{A}_h \mathbf{u}}{\|\mathbf{u}\|_2} = \underset{u_h \in \mathcal{V}_{h}^{\mathrm{ag}}}{\mathrm{inf}} \frac{a(u_h,u_h)}{\|u_h\|_{\mathcal{V}^{\mathrm{ag}}(h)}}\frac{\|u_h\|_{\mathcal{V}^{\mathrm{ag}}(h)}}{\|\mathbf{u}\|_2}.
\end{align}
The terms related to the bilinear form in the right-hand side of (\ref{eq:max-eig}) (resp., (\ref{eq:min-eig})) can be bounded by the coercivity and continuity results in the previous proposition. The ratio between the continuous and discrete norms is bounded above and below as follows. Using (\ref{eq:mass_matrix_bounds}), inverse inequality (\ref{eq:inv-ineq}), trace inequality (\ref{eq:tr-ineq}), Cauchy-Schwarz, and the quasi-uniformity of the mesh, we get:
\begin{equation}\label{eq:l2bound-vhag}
\tnor{u_h}^{2}_{\mathcal{V}^{\mathrm{ag}}(h)} 
\doteq \|\boldsymbol{\nabla}u_h\|^{2}_{\boldsymbol{L}^2(\Omega)} +  \| \tau^{\frac{1}{2}} u_h \|_{L^2(\Gamma_{\mathrm{D}})}^2 
+ \sum_{T \in \mathcal{T}_{h}^{\mathrm{act}}}^{} h_T^{2} \| u_h \|_{H^2(T \cap \Omega)}^{2}
\lesssim h^{-2} \|u_h\|_{L^2(\Omega)}^{2},
\end{equation}
Using the fact that $u_h \in \mathcal{V}_{h}^{\mathrm{ag}}$ can be expressed as the extension of its interior restriction, the bound in (\ref{eq:l2bound-vhag}) and the upper bound in (\ref{eq:mass_matrix_bounds}), we obtain 
\[
\|u_h\|^2_{\mathcal{V}^{\mathrm{ag}}(h)} \lesssim h^{d-2} \lambda_{\mathrm{max},M} \| \mathbf{u} \|_{2}^2.
\]    
Using a Poincaré-Friedrichs inequality, we readily get $\|u_h\|_{L^2(\Omega)}^2 \lesssim \|u_h\|^2_{\mathcal{V}^{\mathrm{ag}}(h)}$, which combined with the lower bound in (\ref{eq:mass_matrix_bounds}) yields $\|u_h\|_{\mathcal{V}^{\mathrm{ag}}(h)}^2 \gtrsim h^d \lambda_{\mathrm{min},M} \|\mathbf{u} \|^2_2$. It proves the result.
\end{proof}

\section{Interpolation-based discrete extension}\label{sec:interpolation-extension}

Although discrete extension operators relying on Lagrangian FE bases satisfy Def.~\ref{def:discrete-extension-operator}, in practise, they are not a good choice of \ac{fe} basis for high-order approximations. Indeed, the constants in the definitions of Section~\ref{sec:agfem} can depend on the polynomial order. The main problem with Lagrangian \acp{fe} is in the constant in the continuity of $\mathcal{E}_{h}^{\mathrm{ag}}$ in~(\ref{eq:discrete-extension-continuity}). The underlying issue is apparent in the linear constraint~(\ref{eq:agfem-constraints}): Given an ill-posed \ac{dof} $\sigma^\alpha \in \mathcal{D}_{h}^{\mathrm{ipd}}$, its associated aggregate $U \in \mathcal{T}_{h}^{\rm ag}$ is $U = \mathcal{O}_h^{\mathrm{in \to ag}} \circ \mathcal{O}_h^{\mathrm{dof \to in}}(\sigma^\alpha)$. Let $R_U$ be the root cell of $U$. Since we \emph{extrapolate} the root shape functions into the whole aggregate, we have for Lagrangian \ac{fe} bases that
\begin{equation}
  \phi_{\rm lag}^{\beta}(\boldsymbol{x}^{\alpha}) \propto \left(\frac{\mathrm{diam}(U)}{\mathrm{diam}(R_U)}\right)^{d m}, \quad \forall \sigma^\beta \in \mathcal{O}_h^{\mathrm{ipd \to wpd}}(\sigma^\alpha), 
  \label{eq:lag-coeffs}
\end{equation}
where $m$ is the order of the local \ac{fe} space and $d$ the space dimension. In other words, the constraint coefficients blow up with the ratio between the aggregate and root sizes raised to the power of the maximum order of the polynomials in the local \ac{fe} space. This affects the continuity constant of $\mathcal{E}_{h}^{\mathrm{ag}}$ in~(\ref{eq:discrete-extension-continuity}) and leads to severe ill-conditioning for large orders of approximation and/or extrapolation distances. It is also clear that the problem is related to the choice of basis functions, not the method itself. Moreover, it is more relevant when the cut region is large. If the cut cell has small intersection with $\Omega$, then the matrix and vector contributions of the cut cell are also small and suppress the effects of large extrapolation coefficients.

Therefore, our goal is to find a better-conditioned \ac{fe} basis for high-order Ag\ac{fem}. We have three main design criteria: 
\begin{enumerate}
  \item[(a)] \emph{Keep the discrete extension operator:} The new basis should conform to the abstract structure of Ag\ac{fem}. In particular, we want to leverage the same type of discrete extension operator, i.e., based on constraints of the form~(\ref{eq:agfem-constraints}).
  \item[(b)] \emph{Reduce ill-conditioning by extrapolation:} The constraint coefficients $\phi_{\rm new}^{\beta}(\boldsymbol{x}^{\alpha})$ should have a smoother growth with the aggregate-to-root diameter ratio $\mathrm{diam}(U) / \mathrm{diam}(R_U)$; ideally, independent of the \ac{fe} order $m$.
  \item[(c)] \emph{Easy global $\mathcal{C}^0$ continuity:} The new local basis should be suitable for conforming \ac{fe} approximation spaces, i.e., it should be easy to impose continuity across neighbouring cells.
\end{enumerate}

\subsection{Generalised modal $\mathcal{C}^0$ basis in 1D}

Let us begin with the 1D case for simplicity. Let us denote with $\mathcal{E}_h^{\mathrm{Nod}}$ the standard discrete extension operator $\mathcal{E}_h^{\mathrm{ag}}$. Recalling~(\ref{eq:lag-coeffs}), it is easy to check that
\begin{equation}
\underset{v_h \in \mathcal{V}_{h}^{\mathrm{in}}}{\mathrm{max}} 
\frac{\|\mathcal{E}_{h}^{\mathrm{Nod}}(v_h)\|_{L^2(\Omega)}}{\|v_h\|_{L^2(\Omega_{h}^{\mathrm{in}})}} \gtrsim \left( \frac{\mathrm{diam}(U)}{\mathrm{diam}(R_U)}\right)^{m}.
\label{eq:extend-lag}
\end{equation}
This result is obtained by applying the extension operator to the shape functions of a 1D aggregate. The exponent $m$ comes from the extension of the Lagrangian shape functions, which are $m$-th order polynomials. Equation~(\ref{eq:extend-lag}) readily implies that the constants in the continuity bounds in~(\ref{eq:discrete-extension-continuity}) blow up exponentially with $m$. \new{As a result, Def. \ref{def:mass-bounds} does not hold,} leading to serious ill-conditioning issues at high-order.

Let us illustrate this issue with a simple 1D example in Fig.~\ref{fig:cn1d}. \new{The aim of the following test is to show the impact of the order $m$ on the condition number of the mass matrix (and also stiffness matrix).} We consider the \ac{fe} approximation of the Poisson problem~(\ref{eq:poisson-strong}) in a rod $\Omega^{\rm art} \equiv \Omega = (0,1+\theta)$, with $\theta > 0$ a parameter. We prescribe \new{a} homogeneous strong Dirichlet \ac{bc} at $x = 0$, while a Neumann one at $x = 1 + \theta$. The problem is discretised with two \acp{fe} of order $m$; we assume the left cell $(0,1)$ is interior, the right cell $(1,1+\theta)$ is cut and the former is the root of the latter. Note that ``cut'' here is only for classification purposes, i.e., the right cell is not geometrically cut by $\Omega$. In other words, the mesh is body-fitted, $\Omega_h^{\rm act} \equiv \Omega$. In any case, this means that all \acp{dof} (only) in the right cell are constrained by the \acp{dof} of the left cell. Likewise, there is a single aggregate $U$ spanning the whole interval $(0,1+\theta)$. Hence, $\theta$ controls $\mathrm{diam}(U)$, i.e., how far we extrapolate the shape functions at the left cell to constrain the \acp{dof} of the right cell. As shown in Fig.~\ref{fig:cn1d}, if we use Lagrangian \acp{fe} (blue dashed curves), the condition number of the mass and stiffness matrix blow up with both $m$ and $\theta$ very quickly. \new{In this test, we have considered very large ratios between the aggregate and root sizes to show the behavior in extreme cases. However, such large ratios are not common in practice.}

\begin{figure}[ht!]
  \centering
  \includegraphics[width=0.67\textwidth]{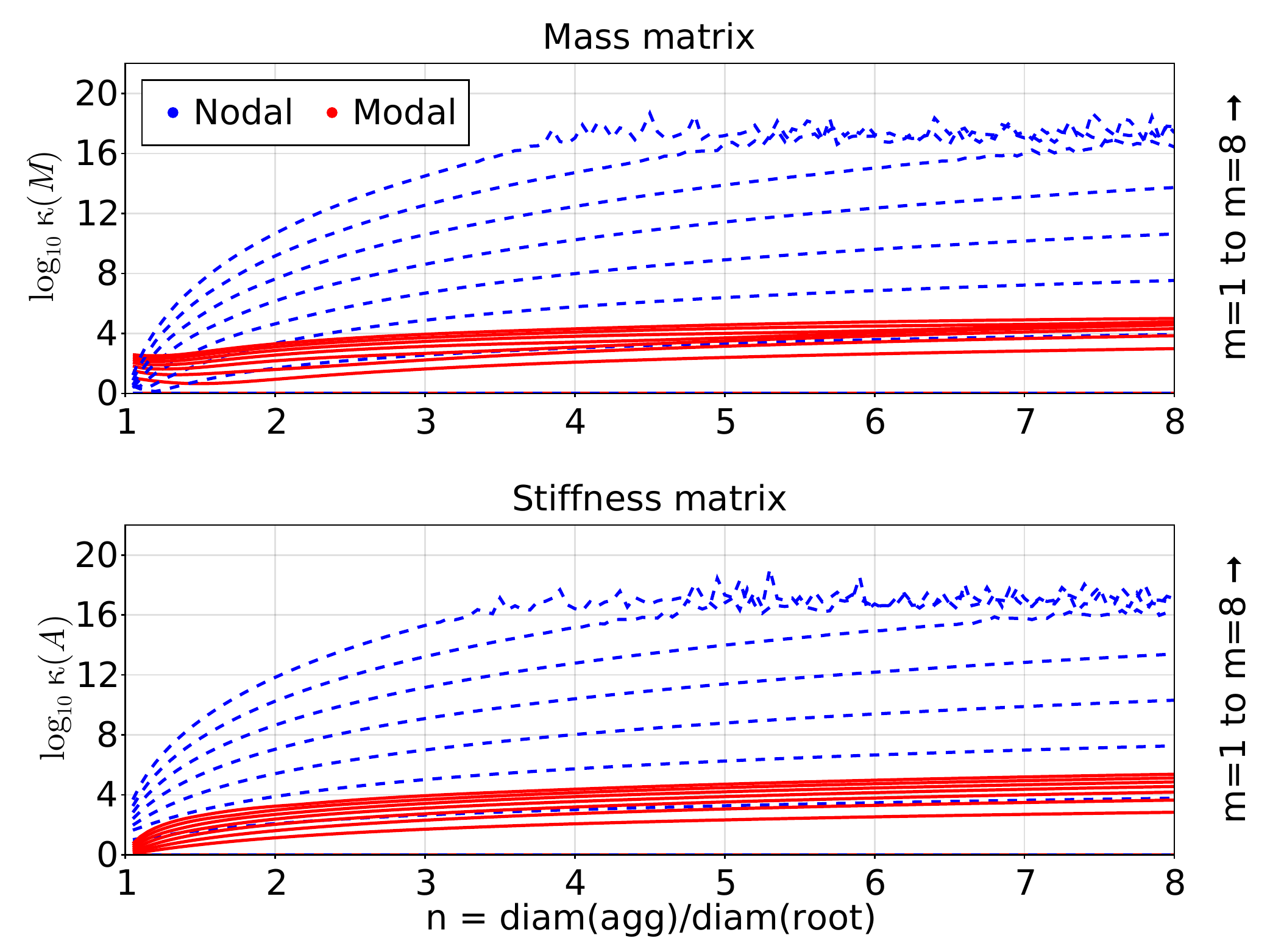}
  \caption{1D Poisson problem in $(0,1+\theta)$, $\theta > 0$. We discretise with two \acp{fe} of order $m$ and define an Ag\ac{fe} space, where the left \ac{fe} $(0,1)$ is the root cell of the right \ac{fe} $(1,1+\theta)$. Thus, $\theta$ controls the distance at which we extrapolate the shape functions of the left (root) cell to constrain the \acp{dof} of the right cell. We represent the condition number of the mass and stiffness matrices, $\kappa(M)$ and $\kappa(A)$, for Lagrangian (blue dashed) and generalised modal $\mathcal{C}^0$ (red) \ac{fe} bases against $\theta$, for different approximation orders $m$. Clearly, modal \acp{fe} are much better conditioned than Lagrangian \acp{fe} for large extrapolation $\theta$ and/or \ac{fe} order.}
  \label{fig:cn1d}
\end{figure}

A natural approach to fulfill the design criteria stated above is to try ``interpolating'' the shape functions within the aggregate, instead of ``extrapolating'' them. With this idea in mind, we realise that most of the high-order extrapolation burden can be transferred to interpolation, using a modified version of the well-established hierarchical modal $\mathcal{C}^0$ \ac{fe} bases. These expansion bases are a classical choice in $hp$-\ac{fem}~\cite{karniadakis2013spectral}. The most common modal $\mathcal{C}^0$ bases are built upon the orthogonal family of Jacobi polynomials. Here, we consider the set of \emph{integrated Legendre} polynomials, typically employed in $p$-FEM~\cite{szabo1991finite} and in the Finite Cell Method~\cite{duster2017p}. Given $\xi \in [0,1]$ and $m \geq 0$, the modal $\mathcal{C}^0$-continuous 1D basis is the set of functions $\{\varphi_0,\ldots,\varphi_m\}$ such that
\begin{equation}
  \varphi_l (\xi) = \begin{cases}
    1 - \xi & \text{if} \ l = 0, \\
    \frac{-\sqrt{2m+1}}{m} \xi(1-\xi) \mathcal{J}_{l-1}^{1,1}(\xi) & \text{if} \ 0 < l < m, \\
    \xi & \text{if} \ l = m,
  \end{cases}
  \label{eq:modalC0}
\end{equation}
where $\mathcal{J}_{n}^{1,1}(\xi)$ is the $n$-th $(1,1)$-Jacobi polynomial. Given $\alpha > -1$ and $\beta > -1$, the Jacobi polynomials $\{\mathcal{J}_{n}^{\alpha,\beta}\}_{n \geq 0}$ are defined by
\[
  \mathcal{J}_{n}^{\alpha,\beta}(t) = \frac{(-1)^n}{n!} 2^{-\alpha-\beta}(1-t)^{-\alpha}t^{-\beta} \frac{d^n}{dt^n} \left( (1-t)^{\alpha+n}t^{\beta+n} \right).
\]
We refer to $\varphi_0$ and $\varphi_m$ as the \emph{nodal modes}, as they coincide with linear Lagrangian 1D shape functions. On the other hand, $\{\varphi_l (\xi)\}_{0<l<m}$ are null at both endpoints; thus, they are referred to as \emph{internal} or \emph{bubble} modes. Besides, $\{\varphi_l (\xi)\}_{0<l<m}$ are scaled to normalise their derivatives such that, by orthogonality of Jacobi polynomials, we have
\[
  \int_0^1 \frac{\mathrm{d}\varphi_i}{\mathrm{d}\xi} \frac{\mathrm{d}\varphi_j}{\mathrm{d}\xi} \mathrm{d}\xi = \delta_{ij}, \qquad i \geq 2 \ \text{and} \ j \geq 0, \quad \text{or vice versa,}
\]

Upon observing the structure of the 1D bubbles, we discover a way to exploit these bases to meet our goals. 1D bubbles are given by the product of the two linear nodal modes against a Jacobi polynomial; in particular, $\varphi_l (\xi) = \varphi_0 (\xi) \varphi_m (\xi) \frac{-\sqrt{2m+1}}{m} \mathcal{J}_{l-1}^{1,1}(\xi)$, $0 < l < m$. It is obvious to see that the product of nodal modes $\varphi_0 \varphi_m$ cancels out $\varphi_l$ at the endpoints, whereas the Jacobi polynomial part does not play any role in that. \emph{Our core idea} is to perturb this factor, such that it is interpolated within the aggregate. This leads to a generalised form of the 1D modal $\mathcal{C}^0$ basis: Given $a, b \in \mathbb{R}$, such that $[0,1] \subset [a,b]$, we denote by $s$ the \new{affine} transformation from $[a,b]$ to $[0,1]$ and we define a generalised version of the 1D polynomial expansion in (\ref{eq:modalC0}) for $\xi \in [a,b]$ as
\begin{equation}
  \varphi_l (\xi) = \begin{cases}
    1 - \xi & \text{if} \ l = 0, \\
    \frac{-\sqrt{2m+1}}{m} \xi(1-\xi) \mathcal{J}_{l-1}^{1,1}(s(\xi)) & \text{if} \ 0 < l < m, \\
    \xi & \text{if} \ l = m.
  \end{cases}
  \label{eq:genmodalC0}
\end{equation}
The only difference with respect to (\ref{eq:modalC0}) is the change of coordinates of the Jacobi polynomial $\mathcal{J}_{l-1}^{1,1}$. Obviously, (\ref{eq:genmodalC0}) forms a polynomial basis. We observe that, in $[a,b]$, the $\varphi_0$ and $\varphi_m$ functions are extrapolated away from $[0,1]$, as in (\ref{eq:modalC0}). However, the Jacobi term is interpolated. Hence, generalised 1D modal $\mathcal{C}^0$ bases satisfy the relation
\begin{equation}
  \max_{\xi \in [a,b]} \left| \varphi_l(\xi) \right| \lesssim \mathrm{diam}(b-a)^{\min(m,2)}.
  \label{eq:mod_extrapolation}
\end{equation}

\begin{proposition}\label{prop:def41-1d}
   The Ag\ac{fe} space $\mathcal{V}_{h}^{\mathrm{ag}}$, built using the 1D generalised modal $\mathcal{C}^0$ basis defined in (\ref{eq:genmodalC0}) as local basis, satisfies Def.~\ref{def:discrete-extension-operator}.
\end{proposition}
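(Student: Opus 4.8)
My plan is to verify the two requirements of Definition~\ref{def:discrete-extension-operator} directly, the only new analytical ingredient being the controlled extrapolation estimate~(\ref{eq:mod_extrapolation}) together with the shape regularity of the aggregates. Fix a boundary aggregate $U$ with root cell $R_U$; after rescaling $R_U$ to the reference interval $[0,1]$, the aggregate becomes an interval $[a,b]\supset[0,1]$ of length bounded by a fixed constant $\tau$, by the geometric construction of the aggregates. Since $\mathcal{E}_h^{\mathrm{ag}}$ leaves interior degrees of freedom untouched, $\mathcal{E}_h^{\mathrm{ag}}(v_h)=v_h$ on $\Omega_h^{\mathrm{in}}$, and on a cut cell $K\subset U$ it produces, in root coordinates, a combination $\sum_{l=0}^m c_l\varphi_l$ of the generalised modal functions~(\ref{eq:genmodalC0}), the $c_l$ being the modal coefficients of $v_h|_{R_U}$. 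For the continuity bound~(\ref{eq:discrete-extension-continuity}) I would estimate, on each $K\subset U$, $\|\mathcal{E}_h^{\mathrm{ag}}(v_h)\|_{L^\infty(K)}\le(\sum_l|c_l|)\max_l\|\varphi_l\|_{L^\infty([a,b])}$ and absorb the second factor into a constant growing only polynomially with $m$ by~(\ref{eq:mod_extrapolation}) and $b-a\le\tau$; the coefficients $c_l$ are controlled by $\|v_h\|_{L^2(R_U)}$ using that the generalised modal basis is well conditioned on the reference root cell in the relevant norm (with condition number polynomial in $m$) together with a standard inverse estimate on $R_U$. Scaling back, multiplying by $|K\cap\Omega|^{1/2}\le h_U^{1/2}$, summing over the boundedly many cut cells of $U$ and then over all aggregates — whose roots are pairwise disjoint interior cells — gives the $L^2$ estimate. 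For the gradient estimate I would use that $\mathcal{E}_h^{\mathrm{ag}}$ reproduces constants, since $\varphi_0+\varphi_m\equiv1$: replacing $v_h$ by $v_h-\bar v_h$, with $\bar v_h$ its mean over $R_U$, leaves $\boldsymbol{\nabla}\mathcal{E}_h^{\mathrm{ag}}(v_h)$ unchanged, so the inverse inequality~(\ref{eq:inv-ineq}) on $\Omega\cap U$, the $L^2$ bound just proven applied to $v_h-\bar v_h$, and the Poincaré inequality $\|v_h-\bar v_h\|_{L^2(R_U)}\lesssim h_U\|\boldsymbol{\nabla}v_h\|_{\boldsymbol{L}^2(R_U)}$ combine to give the second inequality in~(\ref{eq:discrete-extension-continuity}).

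For approximability~(\ref{eq:discrete-extension-approximability}) the key structural remark is that the $m+1$ functions in~(\ref{eq:genmodalC0}) are linearly independent polynomials of degree at most $m$, hence still a basis of $\mathbb{P}_m$; consequently $\mathcal{E}_h^{\mathrm{ag}}$ reproduces $\mathbb{P}_m$ exactly, i.e.\ $\mathcal{E}_h^{\mathrm{ag}}(P|_{R_U})=P$ on the whole aggregate for every $P\in\mathbb{P}_m$. I would then argue as in the standard AgFEM convergence analysis: take $v_h\in\mathcal{V}_h^{\mathrm{in}}$ a quasi-interpolant of $u$ built from the interior cells, and on each aggregate $U$ an averaged Taylor polynomial $P_U\in\mathbb{P}_n$ of $u$ over the shape-regular set $\Omega\cap U$. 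On interior cells $\|u-\mathcal{E}_h^{\mathrm{ag}}(v_h)\|_{H^s}=\|u-v_h\|_{H^s}$ is controlled by the standard interpolation estimate; on a cut cell $K\subset U$ I would write $u-\mathcal{E}_h^{\mathrm{ag}}(v_h)=(u-P_U)+\mathcal{E}_h^{\mathrm{ag}}(P_U|_{R_U}-v_h|_{R_U})$, estimate the first summand by the Bramble--Hilbert lemma on $\Omega\cap U$ and the second by the continuity just proven, together with an inverse estimate absorbing the $H^s$-norm into the $L^2$-norm at the price of $h_U^{-s}$ and the bound $\|P_U-v_h\|_{L^2(R_U)}\le\|u-P_U\|_{L^2(R_U)}+\|u-v_h\|_{L^2(R_U)}\lesssim h^{\,n+1}\|u\|_{H^{n+1}(\Omega)}$. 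Squaring, summing over cut cells and aggregates with finite patch overlap, and using quasi-uniformity then yields~(\ref{eq:discrete-extension-approximability}).

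The main obstacle is the quantitative polynomial-in-$m$ control hidden behind the $\lesssim$ signs: rigorously establishing~(\ref{eq:mod_extrapolation}) requires bounding $\|\mathcal{J}^{1,1}_{l-1}\|_{L^\infty([0,1])}$ and its derivative by powers of $m$ and carefully tracking the $\sqrt{2m+1}/m$ normalisation and the $\sim m$ bubble modes, while the coefficient bound $\sum_l|c_l|\lesssim\|v_h\|_{L^2(R_U)}$ in the continuity step needs the generalised modal basis to remain well conditioned on the reference root cell with a condition number that grows only polynomially with $m$ — both are basis-specific estimates that the standard Lagrangian analysis does not need. Once these are in place, the remaining steps are a rearrangement of estimates already available for AgFEM, with the shape regularity of the aggregates playing the role the cut-cell shape regularity cannot.
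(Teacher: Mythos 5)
Your plan follows the same broad architecture as the paper's (short) proof --- continuity from the interpolated structure of the high-order modes plus shape regularity of the aggregates, gradients via reproduction of constants, and approximability recycled from the standard AgFEM analysis of \cite{Badia2018} via polynomial reproduction and Bramble--Hilbert --- but there is a genuine gap at the central step of your continuity argument. You bound the extension on a cut cell by $\bigl(\sum_l |c_l|\bigr)\max_l\|\varphi_l\|_{L^\infty([a,b])}$ and then claim $\sum_l|c_l|\lesssim\|v_h\|_{L^2(R_U)}$ (suitably scaled) because ``the generalised modal basis is well conditioned on the reference root cell with condition number polynomial in $m$.'' This is false. The generalised bubbles on the root are $\xi(1-\xi)\,\mathcal{J}_{l-1}^{1,1}(s(\xi))$ with $s$ mapping the aggregate bounding box $[a,b]$ onto $[0,1]$, so on the root cell the Jacobi arguments sweep only a subinterval of length $(b-a)^{-1}$. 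By orthogonality of the Jacobi family, the Euclidean norm of the coefficients is equivalent to a weighted $L^2$ norm of the bubble part over the \emph{whole} bounding box, whereas your hypothesis only controls its $L^2$ norm over the root. A polynomial of degree $m$ of unit size on the root can be of size $e^{cm}$ on the remainder of $[a,b]$ (Chebyshev-type extremal growth), so the Gram matrix of the generalised basis restricted to $R_U$ has a minimum eigenvalue that decays exponentially in $m$ whenever $b-a$ is bounded away from $1$. Hence the coefficient bound you need is only available with an exponential constant, and your continuity estimate --- and the approximability step, where you invoke it --- reintroduces exactly the exponential dependence on $m$ that Proposition~\ref{prop:def41-1d} is meant to eliminate.

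This is precisely the step the paper's proof does not take: it never passes from $\|v_h\|_{L^2(R_U)}$ back to the coefficients. Instead, it argues from the interpolatory bound (\ref{eq:mod_extrapolation}) on the \emph{individual extended basis functions} (``applying the extension operator to the shape functions''), combined with the ideas of \cite{Badia2018}; in effect a coefficient-to-function estimate, which is also what feeds the mass-matrix bounds of Def.~\ref{def:mass-bounds}. Your gradient argument (reproduction of constants, inverse inequality (\ref{eq:inv-ineq}), Poincar\'e) and your approximability outline (exact reproduction of $\mathbb{P}_m$ by the extension, quasi-interpolant plus averaged Taylor polynomial) are in line with what the paper defers to \cite{Badia2018}, but they rest on the continuity bound, so the proposal does not stand until that step is either recast at the level of basis functions and coefficient vectors, as in the paper, or otherwise repaired without appealing to a polynomial-in-$m$ conditioning of the generalised basis on the root cell.
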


\begin{proof}
Let us assume now that $R_U = (0,1)$ and $U = (a,b) \equiv \Omega$ and denote the discrete extension operator applied to the generalised modal $\mathcal{C}^0$ basis as $\mathcal{E}_{h}^{\mathrm{Mod}}$. We note that the definition is identical as the one for $\mathcal{E}_{h}^{\mathrm{Nod}}$. However, the operator is different because its definition depends on the choice of the basis we use in the extension. From the discussion above, we have that
\begin{equation}\label{eq:mod-op-bound}
\underset{v_h \in \mathcal{V}_{h}^{\mathrm{in}}}{\mathrm{max}} 
\frac{\|\mathcal{E}_{h}^{\mathrm{Mod}}(v_h)\|_{L^2(\Omega)}}{\|v_h\|_{L^2(\Omega_{h}^{\mathrm{in}})}} \lesssim \left( \frac{\mathrm{diam}(U)}{\mathrm{diam}(R_U)}\right)^{\min (m,2)},
\end{equation}
which can be obtained using the fact that the high-order bases are interpolated (using the ideas in \cite{Badia2018}). We can proceed analogously to show stability of the extension of gradients. The optimal convergence properties can also be proved following~\cite{Badia2018}. 
\end{proof}

Thus, the method satisfies Def.~\ref{def:discrete-extension-operator} with continuity constants that scale with the order $m$ at most quadratically. This is a clear improvement w.r.t.~Lagrangian \ac{fe} bases~(\ref{eq:extend-lag}).

\begin{assumption}
  The Ag\ac{fe} space $\mathcal{V}_{h}^{\mathrm{ag}}$, built using the 1D generalised modal $\mathcal{C}^0$ basis defined in (\ref{eq:genmodalC0}) as local basis, satisfies Def.~\ref{def:mass-bounds}.
\end{assumption}

In order to check this assumption, we observe if the mass matrix satisfies (\ref{eq:mass_matrix_bounds}). We have computed the condition number of the local mass matrix corresponding to the 1D problem of Fig.~\ref{fig:cn1d}, in which $U \equiv \Omega$ and also $\Omega \equiv \Omega_h^{\rm act}$. The condition numbers of the mass matrix obtained for different orders have a very mild dependence with $m$, and thus, satisfy the assumption for 1D bases. Additionally, we expose that the condition numbers of both the mass and stiffness matrices with modal $\mathcal{C}^0$ bases are much lower and better behaved with $m$ than with Lagrangian bases. For the general case $\Omega \subsetneq \Omega_h^{\rm act}$, we refer to the eigenspectrum convergence tests in \new{Section~\ref{sub:exp_eigen} (see Fig.~\ref{fig:eigenextrematest} and Fig.~\ref{fig:eigenbboxes}).} 

\subsection{Generalised modal $\mathcal{C}^0$ bases in multiple dimensions}

Standard tensor product extends modal $\mathcal{C}^0$ shape functions to $d$-cubes. We can also truncate the tensor product as usual to build \emph{trunk space}, a.k.a.~\emph{serendipity}, variants~\cite{duster2017p}. A well-known property of these extensions is the inherent decomposition of the basis functions into $k$-face modes, $0 \leq k \leq d$. For instance, for $d = 2$, we have \emph{vertex} ($0$-face), \emph{edge} ($1$-face) and \emph{face} ($2$-face) modes, see Fig.~\ref{fig:shapefuns_h}. Vertex modes are the 2D linear Lagrangian shape functions. Edge modes, restricted to the boundary, are null everywhere, except in the interior of a single edge. Face modes are internal modes, i.e., null at the boundary. This type of decomposition is particularly convenient to define globally $\mathcal{C}^0$-continuous basis functions. Indeed, given a $k$-face $C$ in $\mathcal{C}_h^{\mathrm{act}}$, $0 \leq k < d$, we only need to enforce continuity of all $k'$-face modes, $0 \leq k' \leq k$, in the closure of $C$. This is done by matching the shape of all individual local basis functions, in the same way as done with Lagrangian \acp{fe}.

In order to extend generalised modal $\mathcal{C}^0$ bases for $d > 1$ via tensor product, we can proceed analogously to the 1D case, i.e., by perturbing the Jacobi factor. However, the multidimensional case is more involved. Due to the tensor product, the domain in which we want to interpolate the Jacobi terms can only have the form of a cartesian product of 1D intervals. On the other hand, shape functions that belong to $k$-faces of interior cells with $k > 0$ can have support on one or multiple root cells (and thus aggregates). Hence, the high-order terms are to be interpolated in \acp{aabb} $\mathcal{B}$ of one or several aggregates. Clearly, due to the modal decomposition, the bounding box $\mathcal{B}$ is $k$-face dependent. Finally, the aggregate does not generally have the same \emph{shape} as the root cell, e.g., it is not an $n$-cube. Thus, given an aggregate $U$, the smallest \ac{aabb} $\mathcal{B}_U$ of the aggregate is larger than $U$ in general. (In the 1D case, both aggregates and root cells are 1D segments and $\mathcal{B}_U = U$.)

According to this, we extend the concept of \acp{aabb} into the context of modal $\mathcal{C}^0$ bases as follows.

\begin{definition}\label{def:modalC0aabb}
  Given a set $W \subset \mathbb{R}^d$ and $\mathcal{B}$ denoting the (geometrical) \ac{aabb} of $W$, 
   the (modal $\mathcal{C}^0$) $\mathcal{B}^{\rm mod}$ \ac{aabb} is given by
  \begin{itemize}
    \item $\mathcal{B}^{\rm mod}(T) \doteq \mathcal{B}(\mathcal{O}_h^{\rm in \to ag}(T) \cap \Omega)$, if $T \in \mathcal{T}_h^{\rm in}$, and
    \item $\mathcal{B}^{\rm mod}(C) \doteq \mathcal{B}( \cup_{T \in \mathcal{T}_C^{\rm in}} \mathcal{O}_h^{\rm in \to ag}(T) \cap \Omega)$, if $C \in \mathcal{C}_h^{\rm in}$, 
  \end{itemize}
  where $\mathcal{T}_C^{\rm in} \subset \mathcal{T}_h^{\rm in}$ is the set of interior cells that contain $C$.
\end{definition}

We refer to Fig.~\ref{fig:shapefuns_a}-\ref{fig:shapefuns_f} for some examples of $\mathcal{B}^{\rm mod}$. 

\begin{remark}
  We note that adjusting the $\mathcal{B}^{\rm mod}$ bounding box to the physical domain $\Omega$ in Def.~\ref{def:modalC0aabb}, instead of using the whole active cells $\Omega_h^{\rm act}$, has a very positive impact \new{on} the condition number when sliver cuts are present, see Fig.~\ref{fig:eigenbboxes} corresponding to the eigenextrema convergence tests of Section~\ref{sub:exp_eigen}. It also means that we are partially extrapolating the high-order terms when computing the constraints, but \emph{always} interpolating when evaluating functions inside the domain $\Omega$, e.g., when integrating the weak form.
  \label{rem:bboxes}
\end{remark}

We can now use $\mathcal{B}^{\rm mod}$ to define the $d$-dimensional change of coordinates $s$ for each $k$-face mode of the generalised modal $\mathcal{C}^0$ basis. The goal is to interpolate the Jacobi polynomial term inside the whole region in which the $k$-face mode is extended by aggregation.

\begin{definition}\label{def:shapefuns}
Let us consider the $d$-dimensional tensor product of generalised modal $\mathcal{C}^0$ basis functions (\ref{eq:genmodalC0}). Given a $k$-face mode, $0 \leq k \leq d$, and $T \in \mathcal{T}_h^{\rm in}$, the $d$-dimensional change of coordinates $s$ is defined as follows:
\begin{itemize}
  \item if $k = 0$, $0$-modes are the product of nodal modes; they remain unchanged (map $s$ does not apply).
  \item if $k = d$, $d$-modes are internal. As a result, it suffices to take $s: \mathcal{B}^{\rm mod}(T) \to T$ and evaluate the unidirectional Jacobi polynomial terms at the corresponding component of $s$.
  \item if $0 < k < d$, let $C \in \mathcal{C}_h^{\rm in}$ denote the $k$-face associated to the $k$-mode. In this case, $s: \mathcal{B}^{\rm mod}(C) \to C$ and we proceed as in the previous case.
\end{itemize}
\end{definition}

By construction, the transformations $s$ do not alter global $\mathcal{C}^0$-continuity of the basis, because we consider unique $s$ mappings for each $C \in \mathcal{C}_h^{\rm in}$. Likewise, they allow to extend the Jacobi polynomial terms by interpolation, since $\mathcal{B}^{\rm mod}$ encloses the aggregate (or aggregates) where the local $k$-mode can be evaluated.

\begin{figure}[ht!]
  \centering
  \begin{subfigure}{0.32\textwidth}
    \centering
    \includegraphics[width=0.9\textwidth]{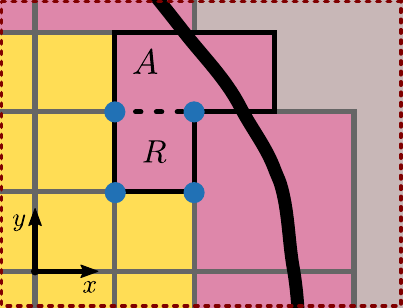}
    \caption{$\mathcal{B}^{\rm mod}$ for corner modes}
    \label{fig:shapefuns_a}
  \end{subfigure}
  \begin{subfigure}{0.32\textwidth}
    \centering
    \includegraphics[width=0.9\textwidth]{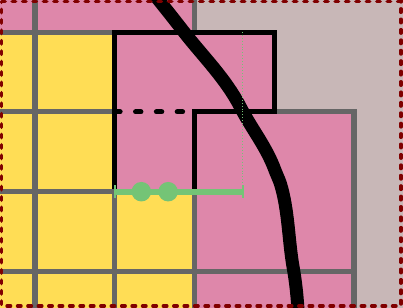}
    \caption{$\mathcal{B}^{\rm mod}$ for edge 1 modes}
    \label{fig:shapefuns_b}
  \end{subfigure}
  \begin{subfigure}{0.32\textwidth}
    \centering
    \includegraphics[width=0.9\textwidth]{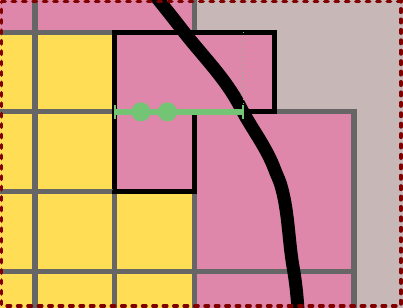}
    \caption{$\mathcal{B}^{\rm mod}$ for edge 2 modes}
    \label{fig:shapefuns_c}
  \end{subfigure} \\ \vspace{0.3cm}
  \begin{subfigure}{0.32\textwidth}
    \centering
    \includegraphics[width=0.9\textwidth]{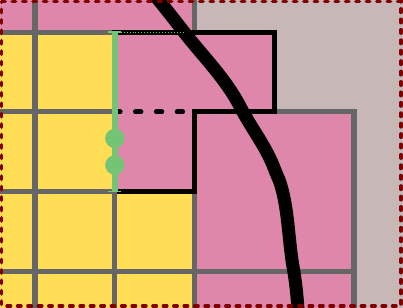}
    \caption{$\mathcal{B}^{\rm mod}$ for edge 3 modes}
    \label{fig:shapefuns_d}
  \end{subfigure}
  \begin{subfigure}{0.32\textwidth}
    \centering
    \includegraphics[width=0.9\textwidth]{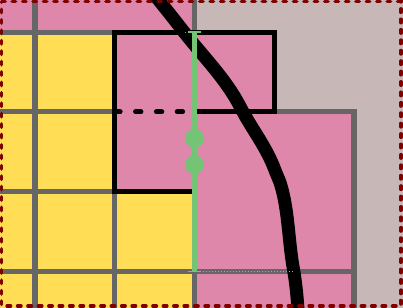}
    \caption{$\mathcal{B}^{\rm mod}$ for edge 4 modes}
    \label{fig:shapefuns_e}
  \end{subfigure}
  \begin{subfigure}{0.32\textwidth}
    \centering
    \includegraphics[width=0.9\textwidth]{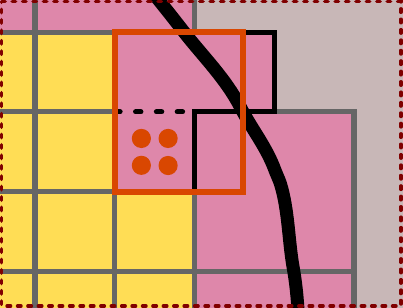}
    \caption{$\mathcal{B}^{\rm mod}$ for face modes}
    \label{fig:shapefuns_f}
  \end{subfigure} \\
  \begin{subfigure}{0.49\textwidth}
    \centering
    \includegraphics[width=0.9\textwidth]{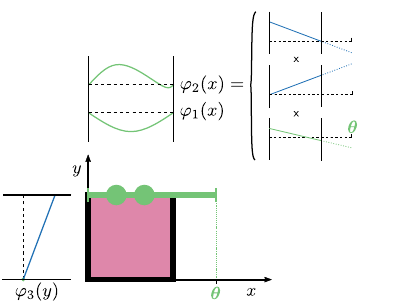}
    \caption{Tensor product for edge 1 modes}
    \label{fig:shapefuns_g}
  \end{subfigure}
  \begin{subfigure}{0.49\textwidth}
    \centering
    \includegraphics[width=0.9\textwidth]{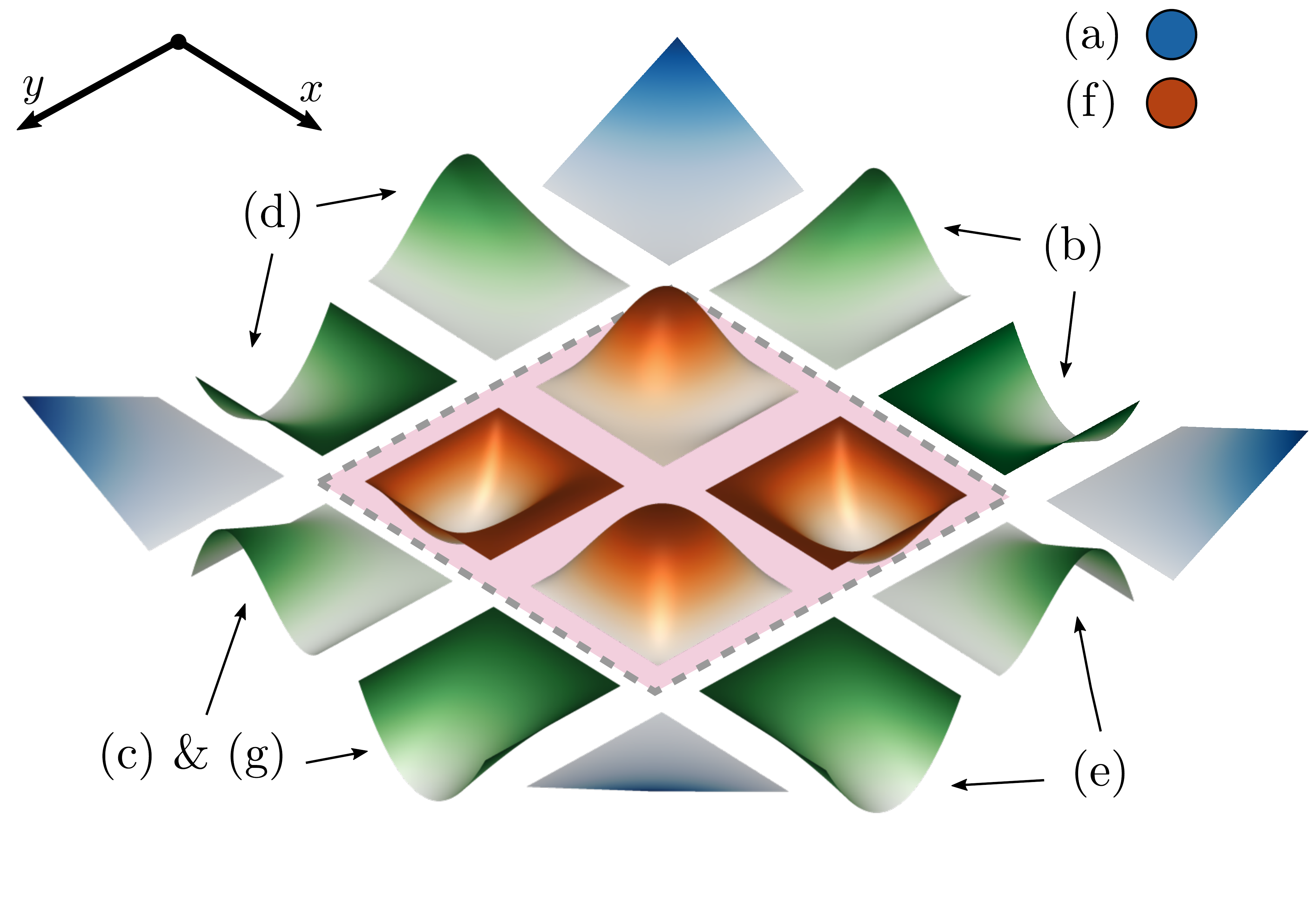}
    \caption{Modal decomposition of root basis functions}
    \label{fig:shapefuns_h}
  \end{subfigure}
  \caption{\new{ Illustration of the main building blocks to construct a generalised 2D modal $\mathcal{C}^0$ polynomial basis on the square for $m = 3$. Circles denote nodes in the interior of the corresponding vertex, edge, or face. The root cell $R$ and the aggregate $A$ correspond to the ones depicted in Fig.~\ref{fig:defs_maps}. } }
  \label{fig:shapefuns}
\end{figure}

It remains to prove that the discrete extension operator defined with generalised modal $\mathcal{C}^0$ expansions leads to a suitable Ag\ac{fe} space and, more crucially, the constants bounding the operator do not \emph{exponentially} depend on the order of approximation $m$.

\begin{proposition}\label{prop:def41-nd}
   The Ag\ac{fe} space $\mathcal{V}_{h}^{\mathrm{ag}}$, built using the multi-dimensional generalised modal $\mathcal{C}^0$ basis in Def.~\ref{def:shapefuns} as local basis, satisfies Def.~\ref{def:discrete-extension-operator}.
\end{proposition}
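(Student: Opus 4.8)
The plan is to reproduce the one‑dimensional argument of Proposition~\ref{prop:def41-1d} while carefully tracking the tensor‑product modal decomposition across $k$‑faces that are shared by several aggregates. The starting observation is that the extension respects the modal decomposition of Sect.~\ref{sec:interpolation-extension}: the vertex ($0$‑face) modes coincide with the linear Lagrangian ones and are extended by the ordinary, harmless first‑order AgFEM extrapolation, whereas each higher $k$‑face mode ($0<k\le d$) is extended by carrying its modal coefficient from the root cell(s) and re‑evaluating the corresponding generalised modal function of Def.~\ref{def:shapefuns} on the aggregate. By locality of the constraints, $\mathcal{E}_h^{\mathrm{ag}}(v_h)$ restricted to an aggregate $U$ is determined by the modal coefficients of $v_h$ on the root cells containing the $k$‑faces whose modes are extended into $U$; each such $k$‑face is shared by a number of interior cells bounded in terms of mesh shape‑regularity, so it suffices to bound $\|\mathcal{E}_h^{\mathrm{ag}}(v_h)\|_{L^2(U\cap\Omega)}$ and its gradient analogue by $C(m)\|v_h\|_{L^2(R)}$ over the relevant roots $R$, and then sum with bounded overlap.

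The core estimate is the $d$‑dimensional counterpart of (\ref{eq:mod_extrapolation}): each extended basis function $\tilde\varphi$ of Def.~\ref{def:shapefuns} satisfies $\max_{U\cap\Omega}|\tilde\varphi|\lesssim C(m)\,(\mathrm{diam}(U)/\mathrm{diam}(R))^{2d}$, with an extra factor $h^{-1}$ and one extra power of $m$ for $\boldsymbol{\nabla}\tilde\varphi$. This follows from the tensor‑product structure: in each coordinate direction the factor is one of the three $1$D generalised modal functions in (\ref{eq:genmodalC0}); the nodal factors $1-\xi$, $\xi$ and the bubble envelope $\xi(1-\xi)$ are extrapolated but have degree $\le 2$, so they grow at most like $(\mathrm{diam}(U)/\mathrm{diam}(R))^{2}$, while the Jacobi factor $\mathcal{J}^{1,1}_{l-1}(s(\cdot))$ is, by Def.~\ref{def:modalC0aabb}, evaluated with an argument that stays in the reference cell whenever $x\in U\cap\Omega$ — because $\mathcal{B}^{\mathrm{mod}}$ is the bounding box of the relevant aggregate(s) intersected with $\Omega$, so that the high‑order term is genuinely \emph{interpolated} on $\Omega$ — hence bounded by $\max_{[0,1]}|\mathcal{J}^{1,1}_{l-1}|$ times the normalisation $\sqrt{2m+1}/m$, which is polynomial in $m$. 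Since $U$ is shape‑regular by construction, $\mathrm{diam}(U)/\mathrm{diam}(R)\le\tau$, so all geometric factors are $O(1)$ and $\tilde\varphi$ is bounded on $U\cap\Omega$ by $C(m)$; the gradient picks up $h^{-1}$ from the chain‑rule factor $|s'|\sim h^{-1}$ and a further power of $m$ from $\mathrm{d}\mathcal{J}^{1,1}_{l-1}/\mathrm{d}\xi$.

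With this pointwise bound I would close the estimate as in~\cite{Badia2018}. Writing $v_h=\sum_j c_j\varphi_j$ on a root cell, so that $\mathcal{E}_h^{\mathrm{ag}}(v_h)=\sum_j c_j\tilde\varphi_j$ on the aggregate, the triangle inequality together with $\|\tilde\varphi_j\|_{L^2(U\cap\Omega)}\lesssim C(m)h^{d/2}$, the bound $\|\mathbf{c}\|_1\lesssim m^{d/2}\|\mathbf{c}\|_2$, and the equivalence, with constants polynomial in $m$, between $\|v_h\|_{L^2(R)}$ and $h^{d/2}\|\mathbf{c}\|_2$ — a standard property of the integrated‑Legendre mass matrix on the reference cell combined with a scaling argument — gives $\|\mathcal{E}_h^{\mathrm{ag}}(v_h)\|_{L^2(U\cap\Omega)}\lesssim C(m)\|v_h\|_{L^2(R)}$; summing over aggregates yields the first bound in (\ref{eq:discrete-extension-continuity}). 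The gradient bound follows the same route after subtracting the cell mean of $v_h$ (the extension of a constant is that constant) and using a Poincaré inequality on $R$, so that the constant mode is removed before passing to coefficients; the extra $h^{-1}$ is then absorbed by the Poincaré step and the mesh quasi‑uniformity. For the approximability property (\ref{eq:discrete-extension-approximability}), I would note that on each aggregate the extension reproduces $\mathbb{Q}_m$ — since $\{\tilde\varphi_j\}$ is again a basis of $\mathbb{Q}_m$ agreeing on $R$ with the root basis, the extension of a polynomial equals that polynomial — and then invoke the usual Bramble–Hilbert/Deny–Lions argument on the shape‑regular regions $U\cap\Omega$, exactly as in~\cite{Badia2018}.

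The step I expect to be the main obstacle is the bookkeeping in the first two paragraphs: identifying precisely which $k$‑face modes of which root cells feed into a given aggregate, and verifying that the single change of coordinates $s$ attached to each $k$‑face $C$ via $\mathcal{B}^{\mathrm{mod}}(C)$ simultaneously keeps the Jacobi argument inside the reference cell on \emph{every} aggregate into which the $C$‑modes are extended, while remaining compatible with global $\mathcal{C}^0$‑continuity. This is exactly what Def.~\ref{def:modalC0aabb} is designed for — the bounding box of the union of the relevant aggregates intersected with $\Omega$ — but making the inclusion $U\cap\Omega\subseteq\mathcal{B}^{\mathrm{mod}}(C)$ and its interplay with continuity fully explicit is the delicate point; once it is in place, the remaining estimates are a routine adaptation of the $1$D computation and of~\cite{Badia2018}.
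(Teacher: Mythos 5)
Your proposal is correct and takes essentially the same route as the paper's own (very terse) proof, which simply states that the multidimensional case follows the 1D argument because, by construction, the high-order Jacobi factors are interpolated on the $\mathcal{B}^{\rm mod}$ boxes (so only the low-degree nodal/envelope factors are extrapolated), yielding the stability bound (\ref{eq:mod-op-bound}), with the remaining estimates deferred to the machinery of \cite{Badia2018}. Your elaboration — per-direction pointwise bounds on the extended $k$-face modes, the inclusion $U\cap\Omega\subseteq\mathcal{B}^{\rm mod}(C)$ guaranteed by Def.~\ref{def:modalC0aabb}, coefficient-norm equivalences with constants polynomial in $m$, bounded-overlap summation, and polynomial reproduction plus Bramble--Hilbert for approximability — is a fleshed-out version of exactly that sketch.
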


\begin{proof}
The proof that the multidimensional extension operator satisfies Def.~\ref{def:discrete-extension-operator} follows the same line as in 1D. By construction, the discrete extension operator is interpolatory (for higher order modes). Thus, we get the stability properties in (\ref{eq:mod-op-bound}).
\end{proof}

\begin{assumption}
  The Ag\ac{fe} space $\mathcal{V}_{h}^{\mathrm{ag}}$, built using the 1D generalised modal $\mathcal{C}^0$ basis in Def.~\ref{def:shapefuns} as local basis, satisfies Def.~\ref{def:mass-bounds}.
\end{assumption}

Checking condition number bounds for the mass matrices at the aggregates in multiple dimensions is more involved. The upper bound in (\ref{eq:mass_matrix_bounds}) can readily be obtained. However, a proof of a general lower bound is elusive since, in contrast with standard \ac{fem}, the unfitted method can have arbitrary topologies of aggregates. The main issue is the dependence of the lower bound on the vast amount of possible geometrical configurations that we can have for $\mathcal{B}^{\rm mod}$ and its intersection with $\Omega$. Therefore, for simplicity, we check experimentally this assumption for a  similar scenario to the one in the 1D example of Fig.~\ref{fig:cn1d}, where we integrate the shape functions over the whole aggregate $U$.

In this simplified setting, it is easy to find a worst-case scenario. For the sake of brevity, we limit ourselves to a representative 2D case. We consider the Poisson problem~(\ref{eq:poisson-strong}) in a square $\Omega_h^{\rm art} \equiv \Omega = [0,n]^2$, with $n > 0$ an integer parameter. We prescribe \new{a} homogeneous Dirichlet \ac{bc} at the bottom and left sides, \new{and a} homogeneous Neumann \new{\ac{bc}} at the top and right sides. The problem is discretised with a uniform $n \times n$ Cartesian grid of \acp{fe} of order $m$; we assume the bottom left cell $[0,1]^2$ is interior, the rest of cells are cut (constrained) and the former is their root (constraining). As in Fig.~\ref{fig:cn1d}, ``cut'' is only meant for classification purposes, i.e., there are no cells geometrically cut by $\Omega$. Therefore, there is a single aggregate $U$ and we also have that $\mathcal{B}^{\rm mod}(U) = U$. Hence, $n$ controls $\mathrm{diam}(U)$, i.e., how far we extrapolate the shape functions at the root cell to constrain the \acp{dof} of the rest of cells. As shown in Fig.~\ref{fig:cn2d}, \new{the condition number of the mass (and also stifness matrix) does not depend exponentially on $m$, thus satisfying the requirement in Def. \ref{def:mass-bounds}.}

Additionally, we plot the condition numbers obtained with Lagrangian bases. We reach the same conclusion as in the 1D example. In contrast to Lagrangian bases, the rate of growth of condition number with extrapolation distance for modal $\mathcal{C}^0$ bases is significantly lower and independent of $m$, $m > 1$. We also note that stretching high-order Jacobi terms at very high aggregate-to-root size ratios (e.g., >4) could lead to linear dependency issues, especially in 3D. However, high aggregate-to-root size ratios are not expected, because they imply lack of mesh resolution at the boundary of the geometry. In any case, this issue can be easily mitigated by controlling aggregate size with $h$-refinement~\cite{Badia2020Jun}.

\begin{figure}[ht!]
  \centering
  \begin{subfigure}{0.25\textwidth}
    \centering
    \includegraphics[width=0.9\textwidth]{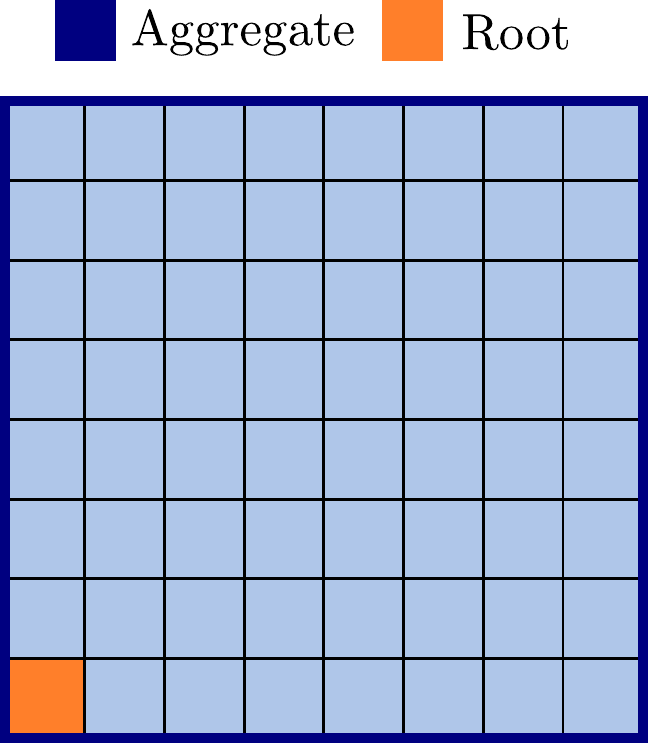}
    \caption{Setup for $n = 8$}
  \end{subfigure}
  \begin{subfigure}{0.73\textwidth}
    \centering
    \includegraphics[width=0.9\textwidth]{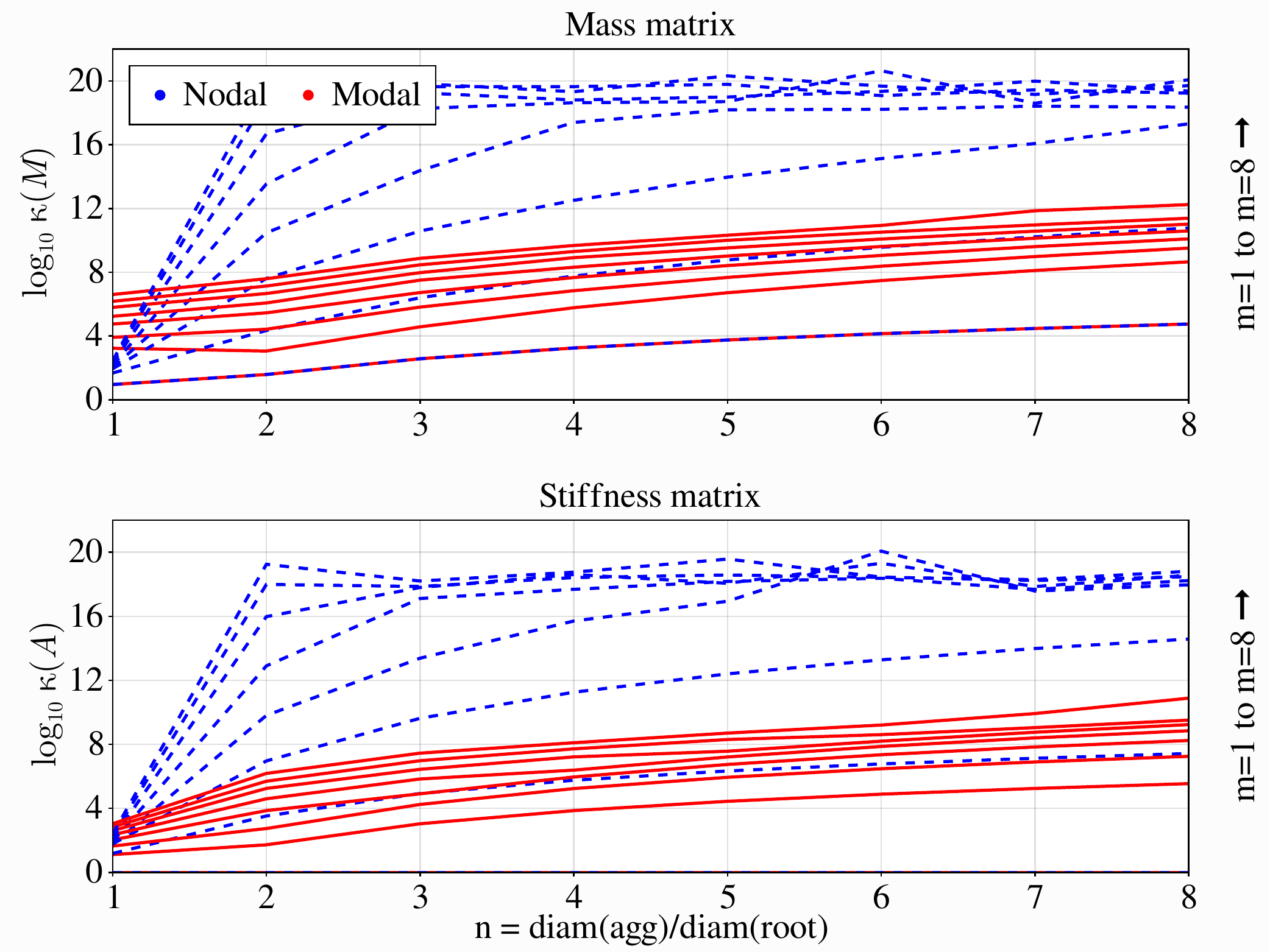}
    \caption{Condition number plots}
  \end{subfigure}
  \caption{2D Poisson problem in $[0,n]^2$, $n > 0$. We discretise with an $n \times n$ uniform Cartesian grid with \acp{fe} of order $m$ and define an Ag\ac{fe} space, where the orange cell in (\textsc{a}) is the root cell of the blue cells. Thus, $n$ controls the distance at which we extrapolate the shape functions of the root cell to constrain the \acp{dof} of the rest of cells. We represent the condition number of the mass $\kappa(M)$ and stiffness $\kappa(A)$ matrices for tensor-product Lagrangian nodal and generalised modal $\mathcal{C}^0$ \ac{fe} bases against $n$, for different approximation orders $m$. Clearly, modal \acp{fe} are much better conditioned than Lagrangian \acp{fe} for large extrapolation ($n$) and/or \ac{fe} order ($m$). We note that values of diam(agg)/diam(root) above 3 or, exceptionally, 4 are not expected in practical simulations; we just show them for illustration purposes. Values of diam(agg)/diam(root) above 4 imply the background mesh has probably not enough mesh resolution to capture the geometric features of $\Omega$ and should be refined.}
  \label{fig:cn2d}
\end{figure}

Based on these numerical results, we observe that the assumption holds for the case being considered
while the upper bound blows up exponentially with the polynomial order for standard Lagrangian bases.  More complex numerical experiments that show the good behaviour of the proposed basis compared to standard bases can be found in Section~\ref{sec:numerical_experiments}.

\section{Numerical experiments}\label{sec:numerical_experiments}

\subsection{Methods and parameter space}
\label{sub:exp_params}

We solve the Poisson problem~(\ref{eq:poisson-strong}) with weak Dirichlet boundary conditions everywhere and the elasticity problem~(\ref{eq:elasticity-strong}) with Neumann and strong Dirichlet boundary conditions. We use the discrete approximations~(\ref{eq:poisson-weak}) and~(\ref{eq:elasticity-weak}) and set $f$ and $g$, such that the solution to the problem is $u(\boldsymbol{x})=\left(\sum_{i=1}^d x_i\right)^{s}$, where $s = 1$ or $m+1$, with $m$ the order of the \ac{fe} space at hand. We refer to the solutions for $s = 1$ or $s = m+1$ as the \emph{in-\ac{fe}-space} and \emph{out-\ac{fe}-space} solutions. Table~\ref{tab:params} collects all simulation parameters. We consider six different (level-set) embedded geometries, as shown in Fig.~\ref{fig:geoms}. In 2D, (a) a disk, (b) a square and (c) a letter G. In 3D, (d) a torus, (e) a cube and (f) a spherical object. (c) and (f) are both \ac{csg} objects~\cite{requicha1977constructive}, i.e., obtained by merging, subtracting and intersecting elementary primitive (level-set) geometries, such as cubes or cylinders. In the linear elasticity case, we simulate (a), (b), (d) and (e) only in the first (positive) orthant. In this way, we can apply strong Dirichlet conditions on the body-fitted boundary and Neumann conditions on the cut boundary.

\begin{table}[ht!]
	\centering
	\begin{small}
		\begin{tabular}{ll}
			\toprule
			Description & Considered methods/values \\
			\midrule
			Model problem & Poisson equation~(\ref{eq:poisson-strong}); \\ 
                    & (compressible) Linear elasticity~(\ref{eq:elasticity-strong})
			\vspace{0.12cm} \\
			Boundary conditions & Nitsche's method (Poisson equation) \\
                          & Neumann and strong Dirichlet (Linear elasticity) \vspace{0.12cm} \\
			Analytical solution & $u(\boldsymbol{x})=\left(\sum_{i=1}^d x_i\right)^{s}$,\\
                          & $s=1$ or $m+1$, with $m$ the \ac{fe} interpolation order, \\
                          & $s=1 \to$ \emph{in-\ac{fe}-space} and $s=m+1 \to$ \emph{out-\ac{fe}-space} 
	  \vspace{0.12cm} \\
			Problem geometry & 2D: disk, square, letter G; \\ 
        & 3D: torus, cube, spherical \ac{csg} object 
			\vspace{0.12cm} \\ 
			Interpolation & $m$-th order globally $\mathcal{C}^0$ \acp{fe} on uniform Cartesian grids, \\
      (and discrete extension) & using (i) tensor-product Lagrangian and (ii) trunk-space \\ & modal $\mathcal{C}^0$ 
      bases (\ref{eq:genmodalC0}) up to $m = 5$ \vspace{0.12cm} \\
      Approximation space & $\mathcal{V}_h^{\rm ag}$, i.e., (strong) \ac{agfem} \vspace{0.12cm} \\
			Coef.~in Nitsche's penalty term & $\beta = 12.0 \ m^2$ \\
			\bottomrule
		\end{tabular}
	\end{small}
	\caption{Summary of the parameters and computational strategies in	the numerical examples.}
	\label{tab:params}
\end{table}

\begin{figure}[ht!]
  \centering
  \begin{subfigure}{0.20\textwidth}
    \centering
    \includegraphics[width=0.8\textwidth]{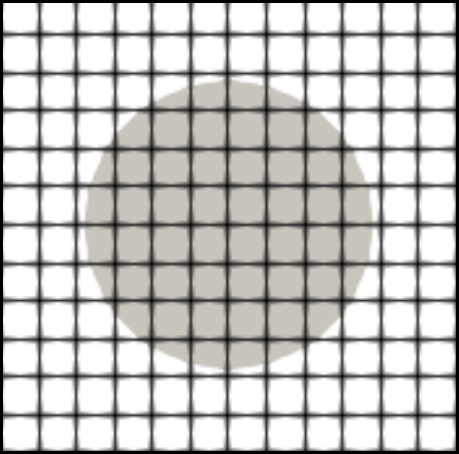}
    \caption{Disk}
  \end{subfigure}
  \begin{subfigure}{0.20\textwidth}
    \centering
    \includegraphics[width=0.8\textwidth]{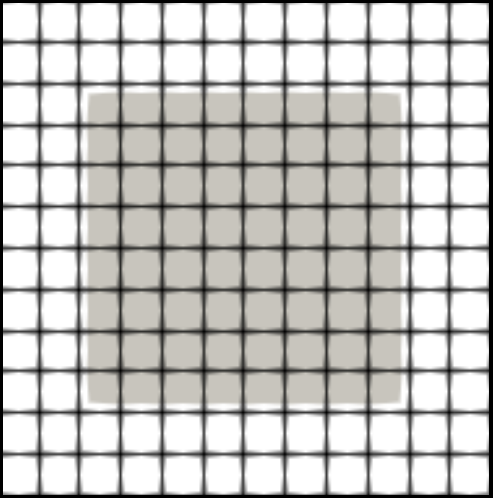}
    \caption{Square}
  \end{subfigure}
  \begin{subfigure}{0.20\textwidth}
    \centering
    \includegraphics[width=0.8\textwidth]{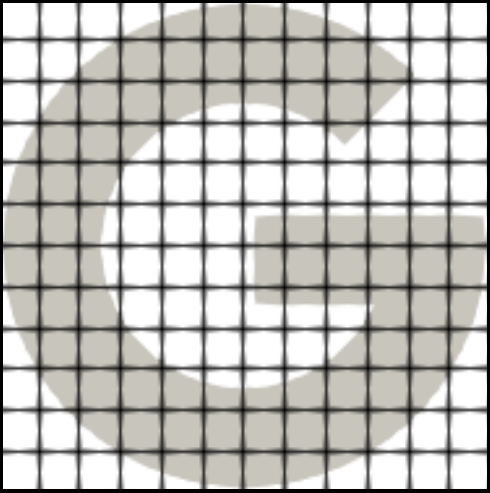}
    \caption{Letter G}
  \end{subfigure} \\
  \begin{subfigure}{0.20\textwidth}
    \centering
    \includegraphics[width=0.8\textwidth]{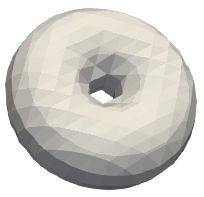}
    \caption{Torus}
  \end{subfigure}
  \begin{subfigure}{0.20\textwidth}
    \centering
    \includegraphics[width=0.8\textwidth]{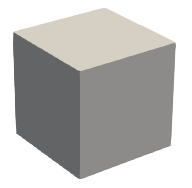}
    \caption{Cube}
  \end{subfigure}
  \begin{subfigure}{0.20\textwidth}
    \centering
    \includegraphics[width=0.8\textwidth]{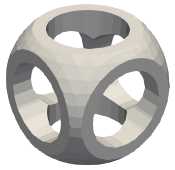}
    \caption{CSG}
  \end{subfigure}
  \caption{Geometries considered for the Poisson problem~(\ref{eq:poisson-weak}). 2D plots also represent the background uniform Cartesian grid.}
  \label{fig:geoms}
\end{figure}

All discretisations are defined on uniform Cartesian background meshes and the embedded geometry is represented with first order cuts. Thus, we neglect geometrical errors in the analysis of the results; we only report functional approximation errors. We study high-order approximations up to order $m = 5$ in $\mathcal{V}_h^{\rm ag}$. We considered both tensor-product and trunk-space Ag\ac{fe} bases, but we only report the best variants: tensor-product Lagrangian and (generalised) trunk-space modal $\mathcal{C}^0$ as given in~(\ref{eq:genmodalC0}); we argue this choice at the end of Section~\ref{sec:stat_cond}. Apart from computing cell aggregates, with the cell aggregation algorithm described in~\cite{Badia2018}, we also compute the $\mathcal{B}^{\rm mod}$ \acp{aabb} for each $k$-face $C \in \mathcal{C}_h^{\rm in}$ and each interior cell $T \in \mathcal{T}_h^{\rm in}$, in order to generate the Ag\ac{fe} spaces with modal $\mathcal{C}^0$ bases.

\subsection{Integration of high order polynomials on polyhedra}\label{sec:integration}

In order to carry out the numerical experiments devised in Table~\ref{tab:params}, we need 2D and 3D high-order numerical quadratures to integrate the high-order (Lagrangian and modal $\mathcal{C}^0$) shape functions \emph{on the cut cells}. We recall that, on each cut cell, we linearly approximate the level-set functions representing the embedded geometry. Hence, we need to come up with an appropriate strategy to integrate high-order polynomials on polygons and polyhedra.

Accurate and efficient high-order 2D and 3D numerical quadratures on implicitly defined domains is still an open topic in the literature, especially, for smooth implicit domain approximations. We refer the reader to \new{the} recent state-of-the-art \new{overviews} in~\cite{DIVI20202481,saye2022high}. In this work, we do not aim to innovate in this area but we cannot choose the numerical quadratures carelessly. We need an approach that offers good compromise between accuracy and efficiency, such that it is suitable for the numerical comparison of the high-order Ag\ac{fe} methods.

We have considered two different approaches for numerical integration on cut cells. Without loss of generality, we describe them for integrals on the cut region $T \cap \Omega$, $T \in \mathcal{T}_h^{\rm cut}$. The first method is analogous to the one described in~\cite{burman_cutfem_2015} for tetrahedral meshes. We leverage a marching cubes algorithm~\cite{Lorensen_1987} to generate subtriangulations of the cut cells. Then, it suffices to generate standard quadrature rules on each simplex of the subtriangulation as, e.g., the by-product of mapping to each simplex a reference Gaussian quadrature on the unit simplex.

We discuss next the efficiency of this method. We recall that $n$-point 1D Gaussian quadrature rules are exact for polynomials of degree up to $2n - 1$. Thus, we need $\lceil (q+1) / 2 \rceil$ points for exact integration of 1D polynomials up to order $q$, where $\lceil \cdot \rceil$ denotes the ceiling function. Let us now look at the amount of points required to integrate exactly mass matrices of $d$-dimensional standard tensor product polynomial bases of degree $m$ in a cut cell $T \in \mathcal{T}_h^{\rm cut}$. In contrast with hexahedral cells, we cannot define product measures on the simplices of the $T$-subtriangulation. Thus, we cannot apply the Fubini theorem to transform double or triple integrals into iterated 1D integrals. \new{As a result, we cannot consider tensor product quadratures on the simplices. Since we have multi-variable monomials of total order $2md$}, we deduce that we need (at least) $\left( \lceil (2md+1) / 2 \rceil^d \right) n_{\rm \Delta,T}$ points in each $T \in \mathcal{T}_h^{\rm cut}$, where $n_{\rm \Delta,T}$ is the number of $\Omega$-interior simplices of the $T$-subtriangulation.

Upon realising that the previous approach is rather inefficient for high-order approximations, especially, in 3D, we resort to moment-fitting methods~\cite{M_ller_2013,Sudhakar_2013,Mousavi_2009}. In this method, given $T \in \mathcal{T}_h^{\rm cut}$, we compute the pair of positions and weights $(\boldsymbol{x}_i,w_i)$ of an $n$-point quadrature rule, by solving the moment equations
\begin{equation}\label{eq:moment-fitted}
  \sum_{i=1}^n \varphi_j (\boldsymbol{x}_i) w_i = \int_{T \cap \Omega} \varphi_j (\boldsymbol{x}) \ \mathrm{d}T, \quad j = 1, \ldots, o,
\end{equation}
where $\{\varphi_j (\boldsymbol{x})\}_{1<j<o}$ are $o$ linearly independent basis functions. In general,~(\ref{eq:moment-fitted}) defines a rectangular nonlinear system of equations. However, as done in~\cite{Hubrich_2019}, we fix $\boldsymbol{x}_i$ as the (tensor product) Gauss-Legendre nodes and $\varphi_j (\boldsymbol{x})$ as their associated Lagrange polynomial functions. Using this combination, we can reduce~(\ref{eq:moment-fitted}) to a diagonal linear system, due to the Kronecker delta property of Lagrange polynomials. In this case, a $d$-dimensional exact quadrature for the mass matrices on cut cells requires $(2m + 1)^d$ points, only. Thus, we obtain a significantly more efficient quadrature, which can be directly defined on the cut cell, instead of defining a different one for each simplex of its subtriangulation. The major drawback of this methodology is that quadrature weights are not generally strictly positive, because the Lagrange polynomial can be mostly negative on the interior cut region. This means that numerical integration with these quadratures can be ill-conditioned and incur in accuracy losses due to, e.g., cancelling errors, especially at high order. 

It remains to see how to compute the right-hand side of~(\ref{eq:moment-fitted}), i.e., the Lagrangian moments corresponding to the quadrature weights $w_i$. Here, we leverage an extension of Lasserre's method~\cite{Chin_2015}. It establishes a way to reduce volumetric integrals of monomials on convex and nonconvex polytopes down to applying a cubature rule, where the points are the vertices of the polytope. Using this approach, we compute first the integrals on the cut region of a tensor product monomial basis. From here, it would suffice to use a change of basis to transform the monomial moments into the sought-for Lagrangian moments in~(\ref{eq:moment-fitted}). However, this potentially leads to inaccurate results, due to inverting the (severely ill-conditioned) monomial-to-Lagrange Vandermonde matrix. One way to bypass this issue is to carry out an intermediate transformation into (tensor-product) Legendre moments, because the monomial-to-Legendre and Legendre-to-Lagrangian transformation matrices are much better conditioned than the Vandermonde one. 

Fig.~\ref{fig:1qc} compares the quadrature sizes and accuracy of the marching cubes and moment fitted methods with increasing order of approximation $m$. For the plot, we solve the Poisson problem~(\ref{eq:poisson-weak}) on the disk and the cube, embedded in a uniform grid of $12^d$ cells. We consider the \emph{in-\ac{fe}-space} solution and we gather the total number of quadrature points in the cut mesh $\mathcal{T}^{\mathrm{cut}}$, as well as the $L^2$ and $H^1$ error of the solution in $\Omega$ approximated with trunk-space modal $\mathcal{C}^0$ bases. The results clearly show that moment-fitted quadratures are much more efficient than marching cube ones, while offering the same level of accuracy up to $m = 5$. Hence, they are selected over the latter for the numerical experiments that follow.

\begin{figure}[ht!]
  \centering
  \begin{subfigure}{0.46\textwidth}
    \centering
    \includegraphics[width=\textwidth]{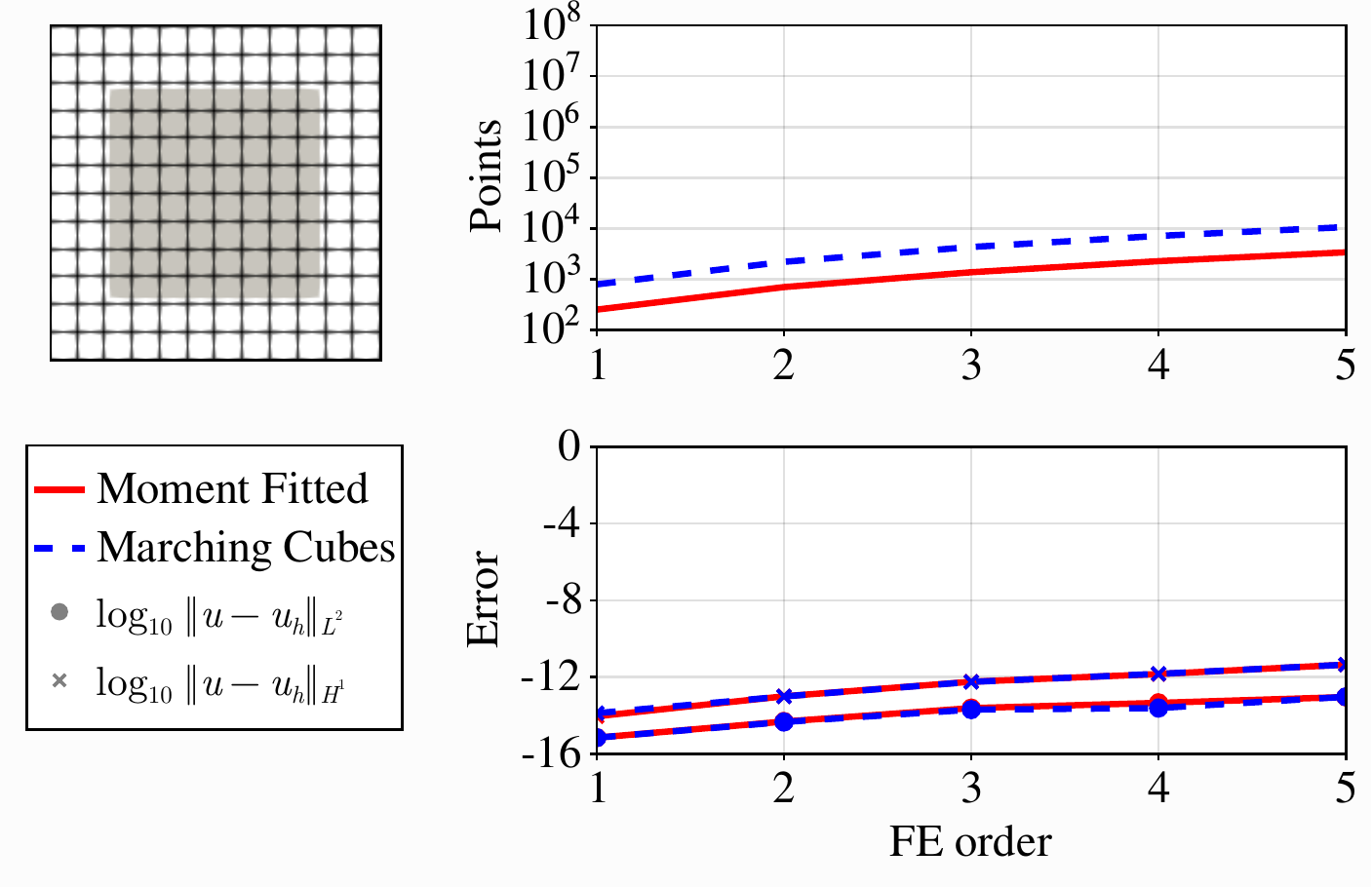}
    \caption{2D case}
  \end{subfigure} \quad
  \begin{subfigure}{0.46\textwidth}
    \centering
    \includegraphics[width=\textwidth]{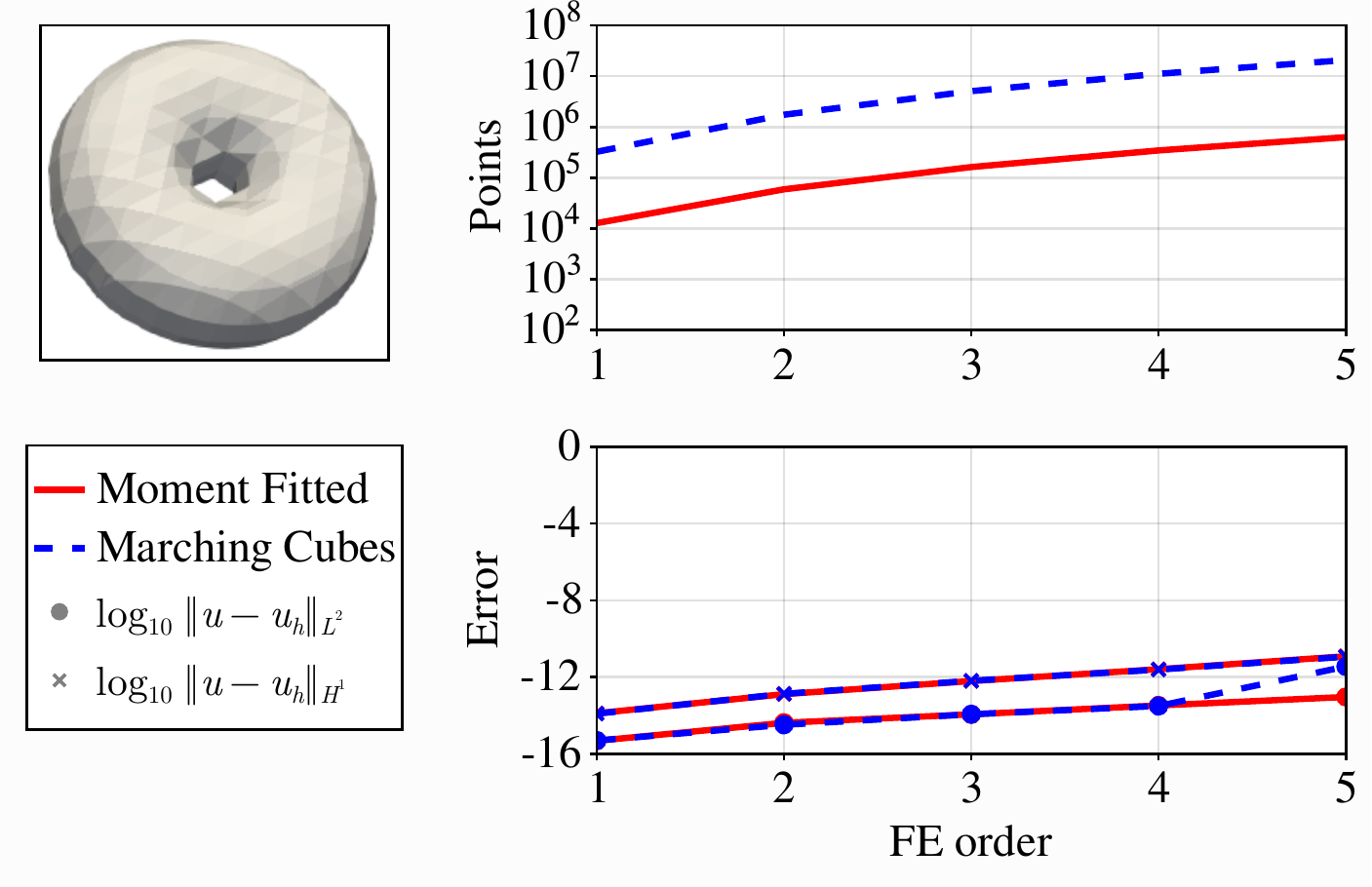}
    \caption{3D case}
  \end{subfigure}
  \caption{Comparison of moment-fitted and marching-cubes cut quadratures}
  \label{fig:1qc}
\end{figure}

\subsection{Static condensation of Ag\ac{fe} linear systems}\label{sec:stat_cond}

As usual in high-order \ac{fem}, we solve for the Schur complement system associated to the discrete problem, instead of the full linear system. We recall that the Schur complement is a reduced system, obtained by global assembly of (element-wise) statically condensed local FE matrices and vectors. In the static condensation, we eliminate internal/bubble \acp{dof} from the local system, since they are only supported in the interior of the element. The reduced system requires significantly less memory storage, at the expenses of the computational cost to perform the static condensation and a worse sparsity pattern than the full system. But the price is generally worth paying, both for direct and iterative linear solvers, especially at orders above cubic~\cite{pardo2015impact}. Additionally, in modal expansions, decoupling interior and boundary modes is key to improve condition numbers and how they scale~\cite{babuvska1991efficient,karniadakis2013spectral}. For instance, in 2D modal $\mathcal{C}^0$ expansions, the condition number of the full stiffness matrix scales as $m^4$, whereas the Schur complement system scales (at most) as $m \log(m)$~\cite{casarin1996schwarz}.

However, the restriction of active \ac{fe} spaces $\mathcal{V}_h^{\mathrm{act}}$ into aggregated ones $\mathcal{V}_h^{\mathrm{ag}}$ destroys the natural decoupling into interior and boundary \acp{dof} at the (nontrivial) root interior cells. In particular, interior \acp{dof} belonging to different root cells become coupled, if they are constraining ill-posed \acp{dof} from the same cut cell. Hence, they cannot be locally removed from the linear system; they must be removed at the global level. For this reason, we advocate for computing the boundary-reduced Ag\ac{fem} matrices in two-stages. The first stage is local and eliminates all bubble well-posed \acp{dof} that do not have support in cut cells, i.e., they do not constrain any ill-posed \ac{dof}. This can be seen as "an incomplete" standard static condensation procedure. Next, we assemble the global (first-stage) Schur complement system. In the second stage, we can now eliminate from the global matrix the rest of well-posed bubble \acp{dof}, i.e., those that are constraining ill-posed \acp{dof}. We note that the performance overhead related to the global static condensation step becomes insignificant with decreasing mesh size. Indeed, \new{on} fine meshes, we expect the number of well-posed bubble \acp{dof} coupled via cut cells to be a lot smaller than the rest of (uncoupled) well-posed bubble \acp{dof}. Thus, in modal $\mathcal{C}^0$ expansions, the impact in efficiency is clearly compensated by the improved behaviour of the condition number.

Apart from the two-stage static condensation, we can also reduce boundary-to-interior couplings by resorting to \emph{trunk} or \emph{serendipity} spaces~\cite{arnold2011serendipity}. It is well-known that they have the same approximability properties as tensor-product ones, although they are slightly less accurate, because they span a smaller multivariate polynomial space. In our context, since (modal $\mathcal{C}^0$ and Lagrangian) trunk-spaces have no bubbles up to order $\leq 3$ in 2D and $\leq 5$ in 3D, we can circumvent the global stage of the static condensation at low-medium order of approximation. We run the numerical experiments with both trunk-space and tensor-product modal $\mathcal{C}^0$ and Lagrangian expansions. For modal $\mathcal{C}^0$, trunk-space expansions lead to better conditioned matrices, in agreement with~\cite{babuvska1989problem}. Conversely, the best results for Lagrangian are obtained with tensor-product expansions because, in contrast with trunk-space ones, we can easily locate them at the set of nodes which minimise their Lebesgue constant, i.e., the Fekete nodes. We recall that minimising the Lebesgue constant mitigates the Runge phenomenon and ill-conditioning affecting high-order Lagrangian polynomials~\cite{ern2021finite}.

We conclude the section with a remark concerning 3D modal $\mathcal{C}^0$ bases. In contrast with 2D bases, eliminating internal \acp{dof} alone is not enough to recover good condition number estimates~\cite{karniadakis2013spectral}. In this case, better results are obtained with low-energy preconditioners, as in~\cite{babuvska1991efficient,sherwin2001low}. However, they are not easy to implement in Ag\ac{fe} spaces, thus we do not cover them in this work.

\subsection{Experimental environment} 
\label{sub:exp_env}

All the algorithms have been implemented in the \texttt{Gridap} open-source scientific software project \cite{Badia2020Aug}. \texttt{Gridap} is a novel framework for the implementation of grid-based algorithms for the discretisation of \acp{pde} written in the \texttt{Julia} programming language. \texttt{Gridap} has a user interface that resembles the whiteboard mathematical statement of the problem. The framework leverages the \texttt{Julia} just-in-time (JIT) compiler to generate high-performant code \cite{verdugo2022software}. \texttt{Gridap} is extensible and modular and has many available plugins. In particular, we have extensively used and extended the \texttt{GridapEmbedded} plugin \cite{GridapEmbedded-jl}, which provides all the mesh queries required in the implementation of the embedded methods under consideration, level set surface descriptions and \ac{csg}. We use the \texttt{cond()} method provided by \texttt{Julia} to estimate condition numbers. Condition numbers have been estimated in the $1$-norm for efficiency reasons. It was not possible to compute the $2$-norm condition number for all cases with the available computational resources. On the other hand, eigenextrema in Section~\ref{sub:exp_eigen} are computed with \texttt{Arpack.jl}, a Julia wrapper for \texttt{Arpack}~\cite{lehoucq1998arpack}. Concerning the linear solver, we use a sparse direct solver from the \texttt{MKL PARDISO} package~\cite*{_intel_????}.

The numerical experiments have been carried out at NCI-Gadi, 
hosted by the Australian National Computational Infrastructure Agency (NCI), and the Marenostrum-IV (MN-IV) supercomputer,  
hosted by the Barcelona Supercomputing Centre. NCI-Gadi is a petascale machine with 3,024 nodes, each containing 2x 24-core Intel Xeon Scalable \textit{Cascade Lake} processors and 192 GB of RAM. All nodes are interconnected via Mellanox Technologies' latest generation HDR InfiniBand technology. MN-IV is a petascale machine equipped with 3,456 compute nodes interconnected with the Intel OPA HPC network. Each node has 2x Intel Xeon Platinum 8160 multi-core CPUs, with 24 cores each (i.e., 48 cores per node) and from 96 to 384 GB of RAM.

\subsection{Eigenextrema convergence tests}
\label{sub:exp_eigen}

The goal of our first experiment is to evaluate the 2D spectral behaviour of the two-stage Schur complement of Ag\ac{fe} matrices corresponding to the Laplacian operator. To this end, we numerically assess the scaling of the maximum and minimum eigenvalues with both mesh size $h$ and order of approximation $m$. We centre upon the sensitivity of the eigenspectrum to the discrete extension and the comparison against the Schur complement of body-fitted \ac{fe} system matrices.

According to this, we solve the Poisson problem~(\ref{eq:poisson-strong}) with the discrete approximation~(\ref{eq:poisson-weak}) in $\mathcal{V}_h^{\rm ag}$ derived from tensor-product Lagrangian and trunk-space modal $\mathcal{C}^0$ Ag\ac{fem} bases. We consider the perturbed square $\Omega = [0,1+\varepsilon] \times [0,1]$, $0 < \varepsilon < 1$, in the artifical domain $\Omega_h^\mathrm{art} = [0,2] \times [0,1]$. The background mesh is a uniform cartesian grid of $\Omega_h^\mathrm{art}$; thus, the face given by $[0,1] \times \{1+\varepsilon\}$ is embedded in the mesh. We represent this geometry setup in Fig.~\ref{fig:eigenextremasetup}. In contrast with the rest of numerical experiments with the Poisson equation, we apply Neumann boundary conditions on the unfitted facet and strong Dirichlet boundary conditions elsewhere.

\begin{figure}[ht!]
  \centering
  \begin{subfigure}{0.24\textwidth}
    \centering
    \includegraphics[width=0.95\textwidth]{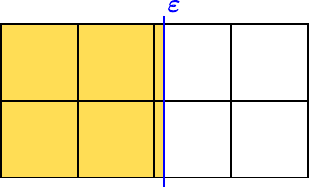}
    \caption{\emph{almost empty} case}
    \label{fig:eigenextremasetup_a}
  \end{subfigure}
  \begin{subfigure}{0.24\textwidth}
    \centering
    \includegraphics[width=0.95\textwidth]{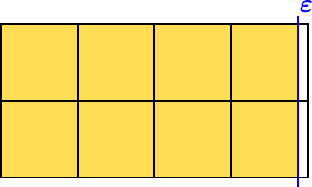}
    \caption{\emph{almost full} case}
    \label{fig:eigenextremasetup_b}
  \end{subfigure}
  \caption{Eigenextrema test: Embedded geometry setup.}
  \label{fig:eigenextremasetup}
\end{figure}

We study the convergence rates of the eigenextrema considering two different extreme cases: \emph{almost empty} or \emph{almost full} cut cells. In the \emph{almost empty} case, $\varepsilon = 10^{-8}$, i.e., the physical cuts are extremely thin slivers and it is apparent that the unfitted problem in $\Omega$ is a small perturbation of a fitted one in $[0,1]^2$. In particular, the unfitted system matrices should be almost identical to their body-fitted analogues, because the external shape functions have very small support in the physical domain. As a result, they barely change the linear system, when their degrees of freedom are removed via discrete extension. We confirm this in the numerical results. As shown in Fig.~\ref{fig:eigenextrematestmin}, we successfully recover theoretical asymptotic rates corresponding to the Schur complement of 2D body-fitted \ac{fe} stiffness matrices~\cite{casarin1996schwarz}: The maximum eigenvalue $\lambda_{\max}$ has a constant scaling with $h$ and $m$. The minimum eigenvalue $\lambda_{\min}$ scales as $h^{-2}$ and $(m \log{m})^{-1} \lesssim \lambda_{\min} \lesssim m^{-1} \log{m}$. We have also checked that the unfitted eigenvalues almost coincide with the body-fitted ones, there is just a small difference because the Schur complements are not computed in the same way. We obtain the same values, though, if we solve the full matrices. Furthermore, we stress that, in the static condensation of Ag\ac{fe} matrices, it is essential to remove all rows and columns corresponding to bubble \acp{dof} from the linear system, i.e., to carry out the second stage of the static condensation. With first-stage (local) static condensation alone we can only recover $\lambda_{\min} \sim h^{-2}m^{-4}$, which is the spectral behaviour of noncondensed body-fitted \ac{fe} matrices~\cite{hu1998bounds}. Finally, in Fig.~\ref{fig:eigenbboxes}, we also see that it is crucial to adjust the $\mathcal{B}^{\mathrm{mod}}$ bounding boxes to the physical domain $\Omega$, instead of to the active domain $\Omega_h^{\rm act}$. This circumvents very poor scaling of the minimum eigenvalue. We have plotted the basis functions with the $\mathcal{B}^{\mathrm{mod}}$ adjusted to $\Omega_h^{\rm act}$ and detected upon visual inspection that linear dependence is the cause for such ill-conditioning.

The \emph{almost full} case is a reversal of the previous one. Here, $\varepsilon = 1.0 - 10^{-8}$, i.e., cut portions differ from the full cell only by a thin sliver. This is the worst scenario to evaluate the effects of the discrete extension, because it maximises the support of external shape functions in the physical domain. Thus, their weight in the linear system, via the discrete extension, is also maximised. Fig.~\ref{fig:eigenextrematestmax} clearly exposes the superiority of modal $\mathcal{C}^0$ \acp{fe} in this scenario with respect to the Lagrangian \acp{fe}. Indeed, while the maximum eigenvalue blows up exponentially (in the log-log plane) for Lagrangian Ag\acp{fe}, modal $\mathcal{C}^0$ Ag\acp{fe} exhibit a significantly smoother (almost linear) growth. Even though modal $\mathcal{C}^0$ Ag\acp{fe} do not recover the eigenvalue asymptotic behaviour of the \emph{almost empty} case, the controlled growth of the maximum eigenvalue, in the \emph{almost full} case, clearly compensates for circumventing the small cut cell problem, in the \emph{almost empty} one. In this sense, we point out that the minimum eigenvalues barely change between the body-fitted, \emph{almost empty} and \emph{almost full} cases. We recall that, if no aggregation is carried out, the smallest eigenvalue of the system corresponds to a function which is only supported on a cut cell~\cite{DePrenter2017}. Since the results are independent of $\varepsilon$, we readily verify that aggregation is indeed robustly circumventing the small cut-cell problem.

\begin{figure}[ht!]
  \centering
  \begin{subfigure}{\textwidth}
    \centering
    \includegraphics[width=0.95\textwidth]{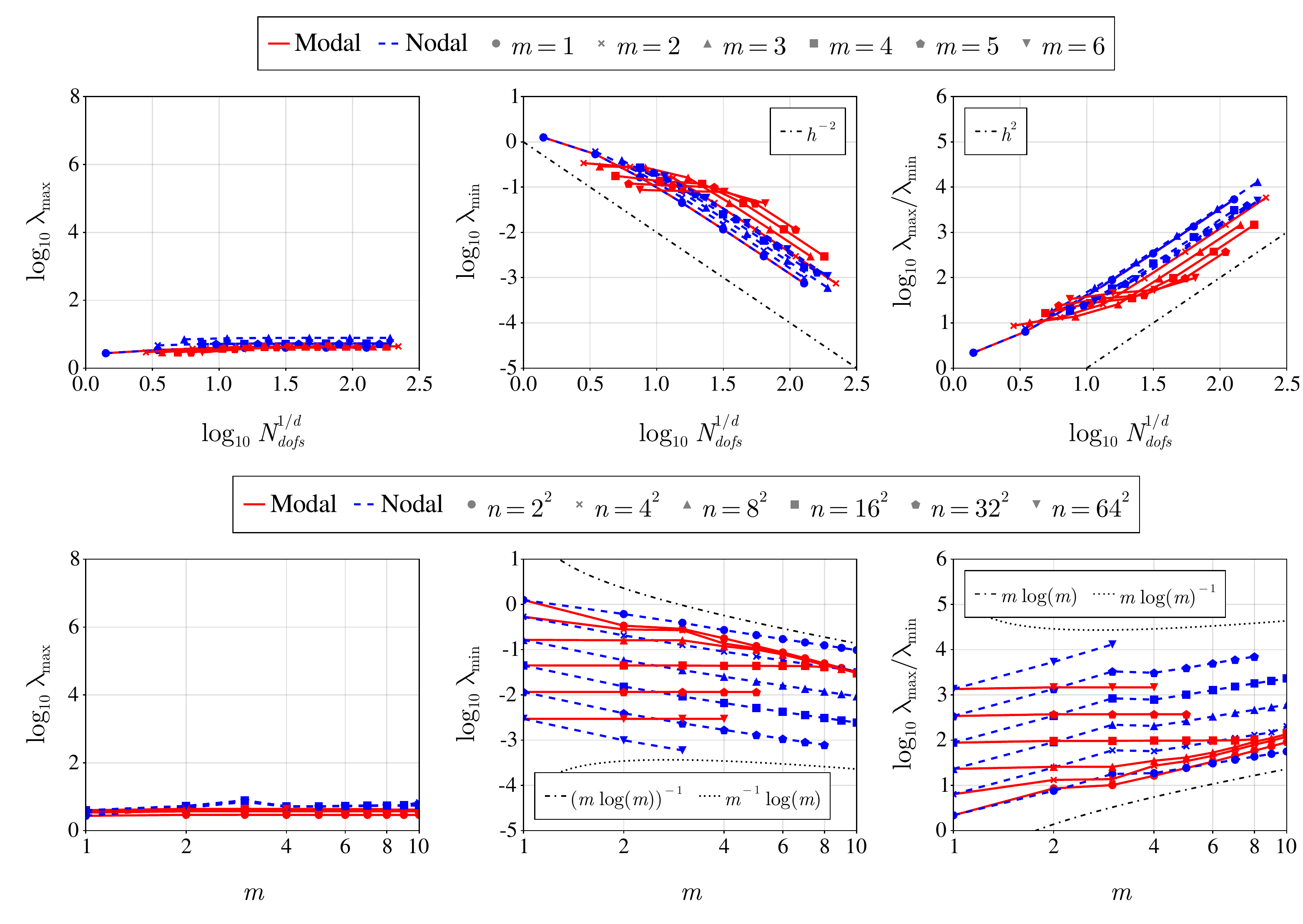}
    \caption{2D \emph{almost empty} case}
    \label{fig:eigenextrematestmin}
  \end{subfigure} \\ \vspace{0.25cm}
  \begin{subfigure}{\textwidth}
    \centering
    \includegraphics[width=0.95\textwidth]{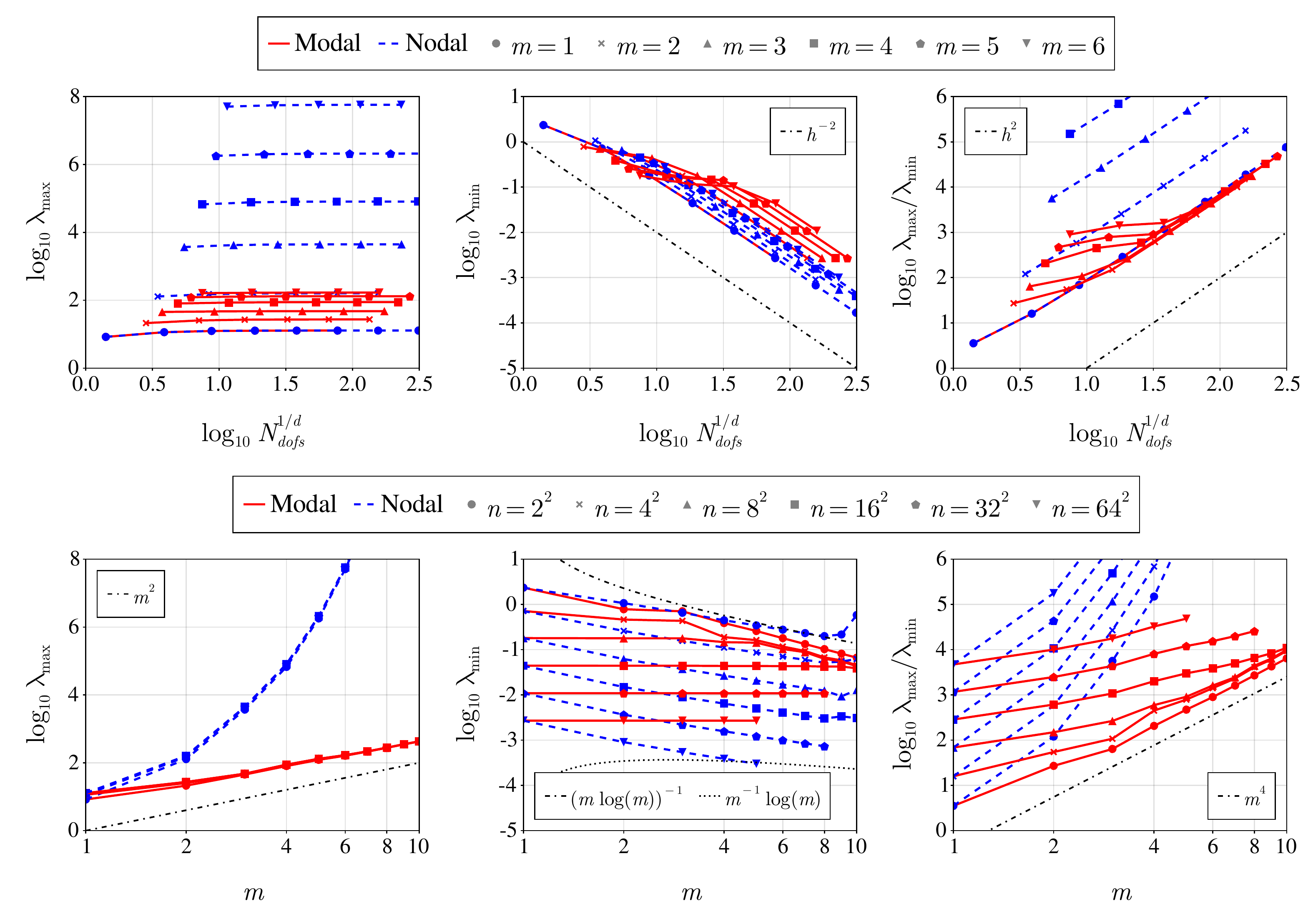}
    \caption{2D \emph{almost full} case}
    \label{fig:eigenextrematestmax}
  \end{subfigure}
  \caption{Eigenextrema test: Convergence rates with mesh size and order $m$}
  \label{fig:eigenextrematest}
\end{figure}

\begin{figure}[ht!]
  \centering
  \includegraphics[width=0.95\textwidth]{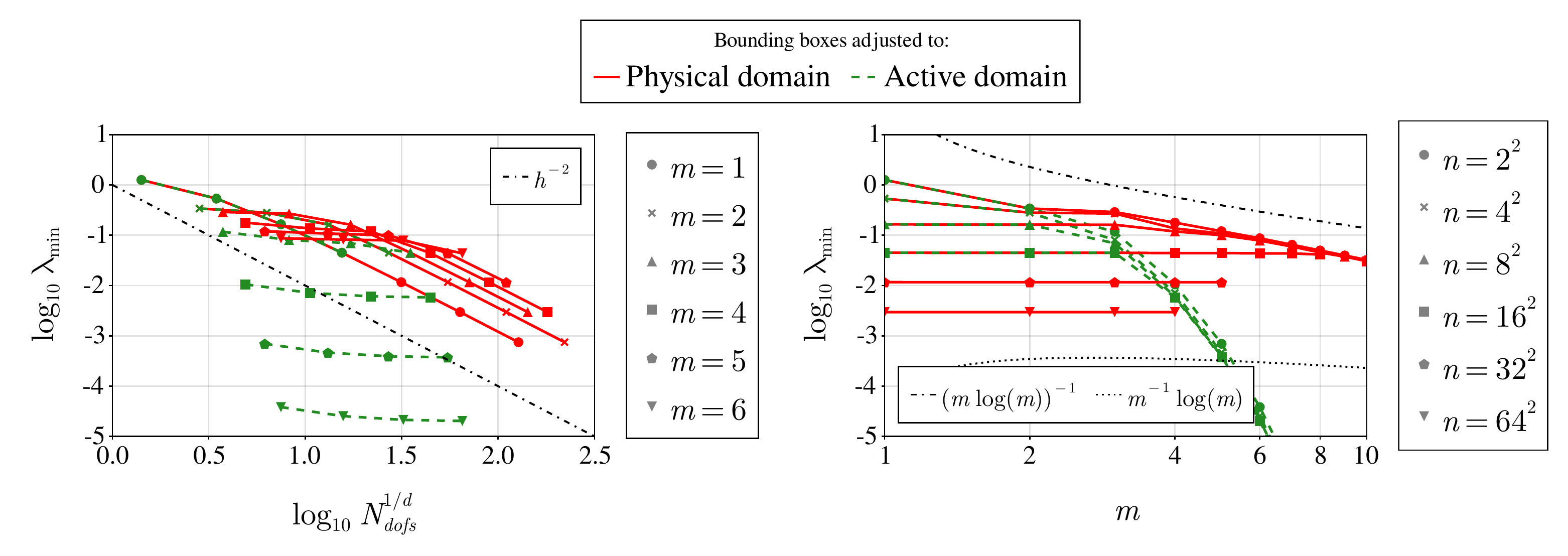}
  \caption{Eigenextrema test: 2D \emph{almost empty} case. Comparison of modal $\mathcal{C}^0$ bases with different definitions of $\mathcal{B}^{\mathrm{mod}}$: either adjusted to the physical domain $\Omega$ or to the active domain $\Omega_h^{\mathrm{act}}$. See also Remark~\ref{rem:bboxes}.}
  \label{fig:eigenbboxes}
\end{figure}

\subsection{Extrapolation distance tests}
\label{sub:exp_extra}

In this test, we continue comparing high-order Lagrangian Ag\ac{fem} against the (new) modal $\mathcal{C}^0$ counterpart. In particular, we offer deeper insight as to how the methods behave as the distance, at which we extrapolate the root shape functions, grows. To this end, we adapt the example in Fig.~\ref{fig:cn2d} with the elasticity problem~(\ref{eq:elasticity-weak}) approximated in $\mathcal{V}_h^{\mathrm{ag}}$, i.e., with Strong Ag\ac{fem}. We solve for the \ac{fe}-space case in the unit $d$-cube, $d=2$ or 3. The background mesh takes 8 cells in each direction and the embedded geometry, represented in Fig.~\ref{fig:extrapolationsetup}, is an $l$-parameterised polytope defined by the vertices $[(0,0),$ $(1/8,0),$ $(l,l),$ $(0,1/8)]$, in 2D, and $[(0,0,0),$ $(1/8,0,0),$ $(1/8,1/8,0),$ $(0,1/8,0),$ $(0,0,1/8),$ $(1/8,0,1/8),$ $(l,l,l),$ $(0,1/8,1/8)]$, in 3D. If $l = 1/8$, the polytope is exactly the background cell located at the origin and there are no cut cells. If we let $l > 1/8$, then the polytope cuts cells next to the diagonal $x = y$ or $x = y = z$. In this case, all active cells are cut, except for the one touching the origin. Therefore, cell aggregation is trivial. In particular, there is a single aggregate \emph{ag} composed by the \emph{root} cell at the origin and all the remaining cut cells. The \ac{aabb} of the aggregate is $[0,l]^d$, thus the (relative) maximum extrapolation distance is $\mathrm{diam(ag)}/\mathrm{diam(root)} = l / (1/8) = 8l$. This value is much higher than one would expect if the background mesh has enough resolution to capture the geometric features of $\Omega$, but it is used to stress the framework. The experiment consists in moving $l$ in the range $[1/8,1]$, solve the problem for each $l$, and compute the $L^2$-error, the $H^1$-error and the condition number of the Schur complement system matrix. In Fig.~\ref{fig:extrapolationtest} we plot these three quantities along the $\mathrm{diam(ag)}/\mathrm{diam(root)}$ for several orders of approximation $m \geq 2$. For brevity, we only report the 3D results, since the 2D ones are very similar. We observe results with the same pattern as the ones observed in Fig.~\ref{fig:cn2d}. Indeed, (full extrapolation) Lagrangian \acp{fe} suffer from severe loss of accuracy and ill-conditioning for $m > 2$ and long extrapolation. Plus, the higher the order of approximation $m$, the faster the degradation. \new{For $m=4$ and $m=5$ and $\mathrm{diam(ag)}/\mathrm{diam(root)}$ larger than 2}, the solution is completely wrong and the condition number estimates unreliable reliable. On the other hand, generalised modal $\mathcal{C}^0$ \acp{fe} are robust and deteriorate with growing extrapolation distance at a much lower rate, which is practically independent of the approximation order.

\begin{figure}[ht!]
  \centering
  \begin{subfigure}{0.29\textwidth}
    \centering
    \includegraphics[height=3.5cm]{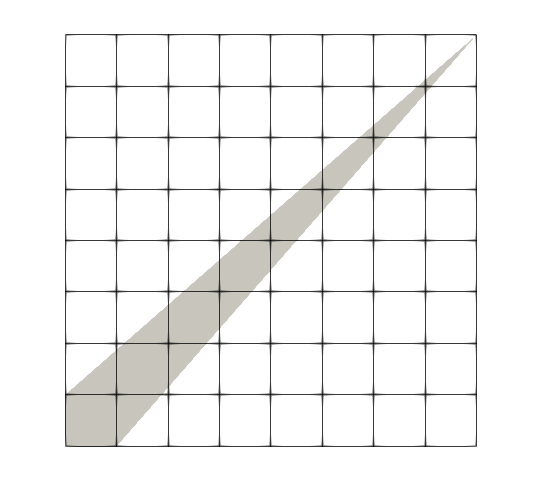}
    \caption{2D with $l=1$}
  \end{subfigure}
  \begin{subfigure}{0.29\textwidth}
    \centering
    \includegraphics[height=3.5cm]{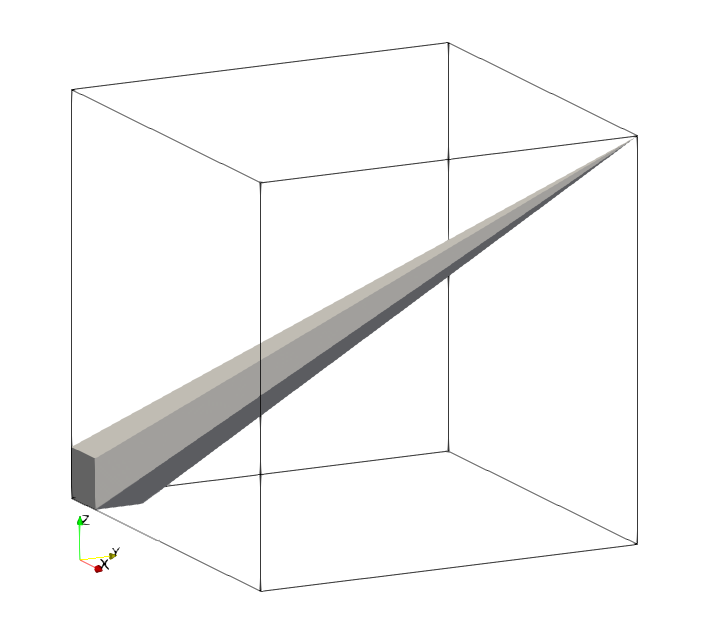}
    \caption{3D with $l=1$}
  \end{subfigure}
  \caption{Embedded geometry in the extrapolation tests of Section~\ref{sub:exp_extra}: an $l$-parameterised polytope enclosed, in 2D, by $[(0,0), (1/8,0), (l,l), (0,1/8)]$, and, in 3D, by $[(0,0,0),(1/8,0,0),(1/8,1/8,0),(0,1/8,0),(0,0,1/8),(1/8,0,1/8),(l,l,l),(0,1/8,1/8)]$.}
  \label{fig:extrapolationsetup}
\end{figure}

\begin{figure}[ht!]
  \centering
    \includegraphics[width=0.9\textwidth]{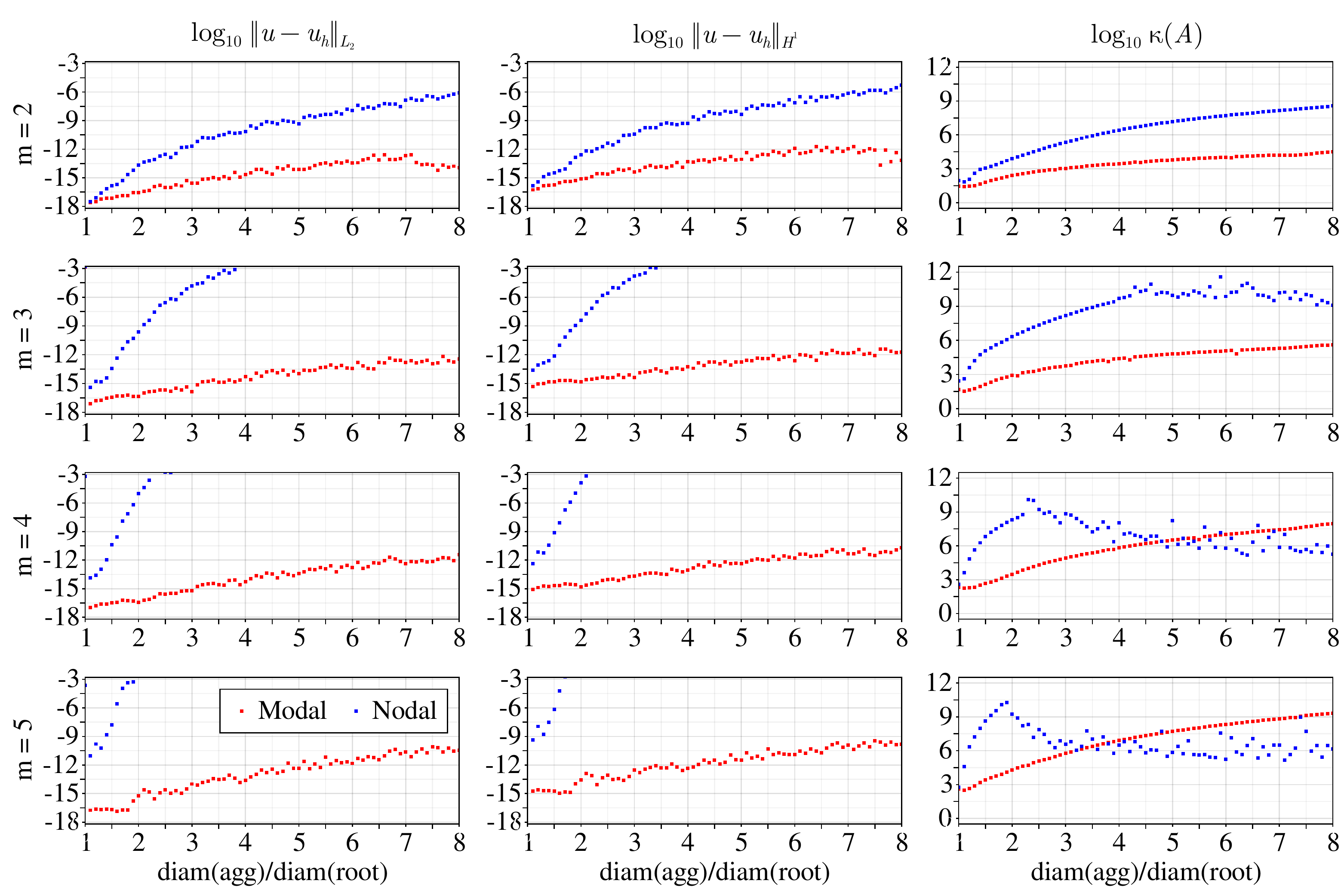}
  \caption{3D extrapolation distance test results for the elasticity problem~(\ref{eq:elasticity-weak}).}
  \label{fig:extrapolationtest}
\end{figure}

\subsection{Convergence tests: nodal (Lagrangian) and modal $\mathcal{C}^0$ methods}
\label{sub:exp_conv_1}

Here, we consider standard convergence tests on the six embedded geometries with the \emph{out-\ac{fe}-space} case. Thus, we study how the $L^2$-error, the $H^1$-error and the condition number of the Schur complement system matrix, associated to~(\ref{eq:poisson-weak}) and~(\ref{eq:elasticity-weak}), behave to uniform mesh refinements. Fig.~\ref{fig:convtest2D} and~\ref{fig:convtest3D} gathers the plots of the convergence tests. We filter results where the direct solver fails to obtain an accurate solution and we have also not computed condition numbers of 3D matrices of order $m > 3$. We observe that, while both methods are optimal, modal $\mathcal{C}^0$ Ag\ac{fem} has a clear superiority in terms of robustness and conditioning, when the order of approximation $m$ is increased, especially in 3D. Indeed, condition numbers of modal $\mathcal{C}^0$ Ag\ac{fem} discretizations fall between 1 and 3 orders of magnitude below the Lagrangian counterparts.

\begin{figure}[ht!]
  \centering
  \begin{subfigure}{\textwidth}
    \centering
    \includegraphics[width=0.9\textwidth]{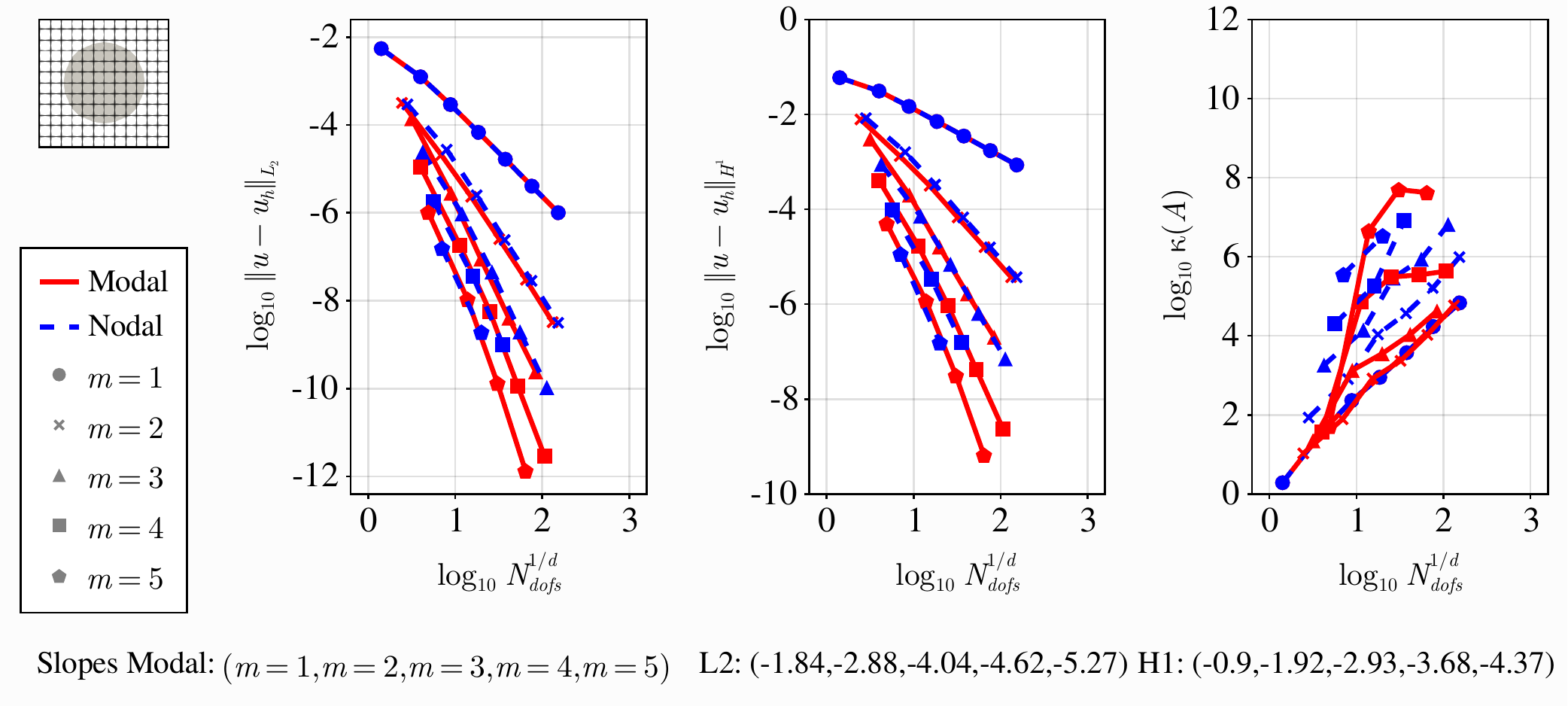}
    \caption{Linear elasticity problem~(\ref{eq:elasticity-weak}) on disk}
  \end{subfigure} \\ \vspace{0.5cm}
  \begin{subfigure}{\textwidth}
    \centering
    \includegraphics[width=0.9\textwidth]{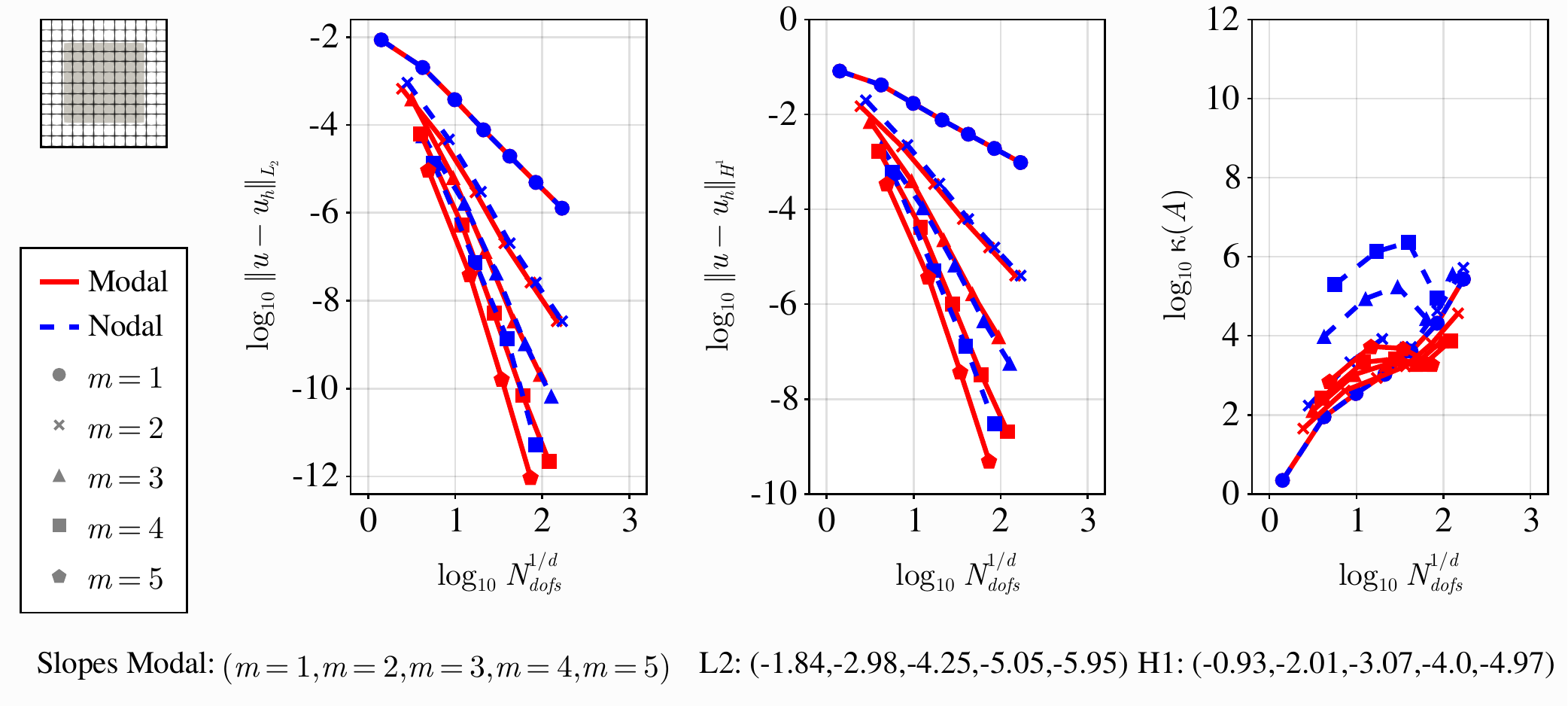}
    \caption{Linear elasticity problem~(\ref{eq:elasticity-weak}) on square}
  \end{subfigure} \\ \vspace{0.5cm}
  \begin{subfigure}{\textwidth}
    \centering
    \includegraphics[width=0.9\textwidth]{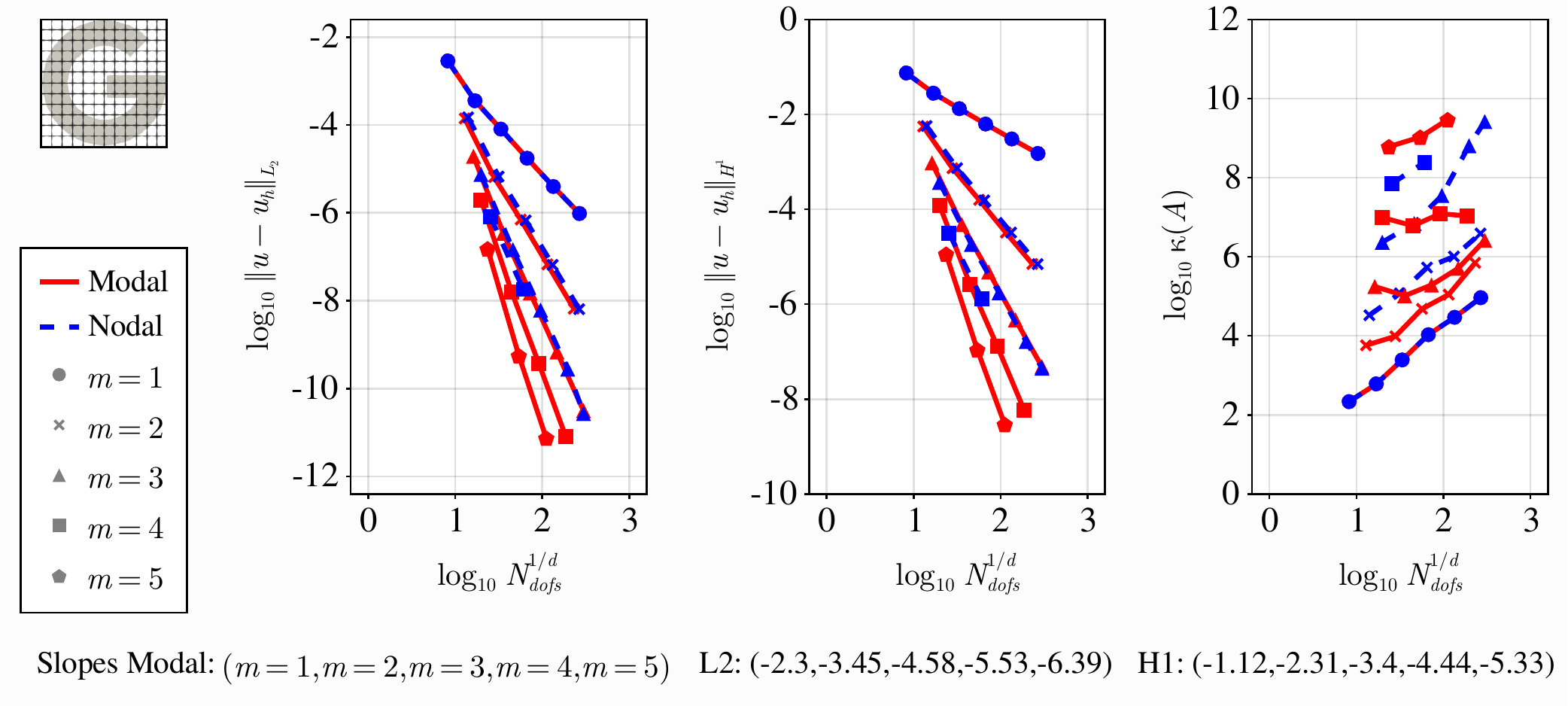}
    \caption{Poisson problem~(\ref{eq:poisson-weak}) on letter G}
  \end{subfigure}
  \caption{Selected 2D convergence tests. Lagrangian vs modal $\mathcal{C}^0$ Ag\ac{fem}.}
  \label{fig:convtest2D}
\end{figure}

\begin{figure}[ht!]
  \centering
  \begin{subfigure}{\textwidth}
    \centering
    \includegraphics[width=0.9\textwidth]{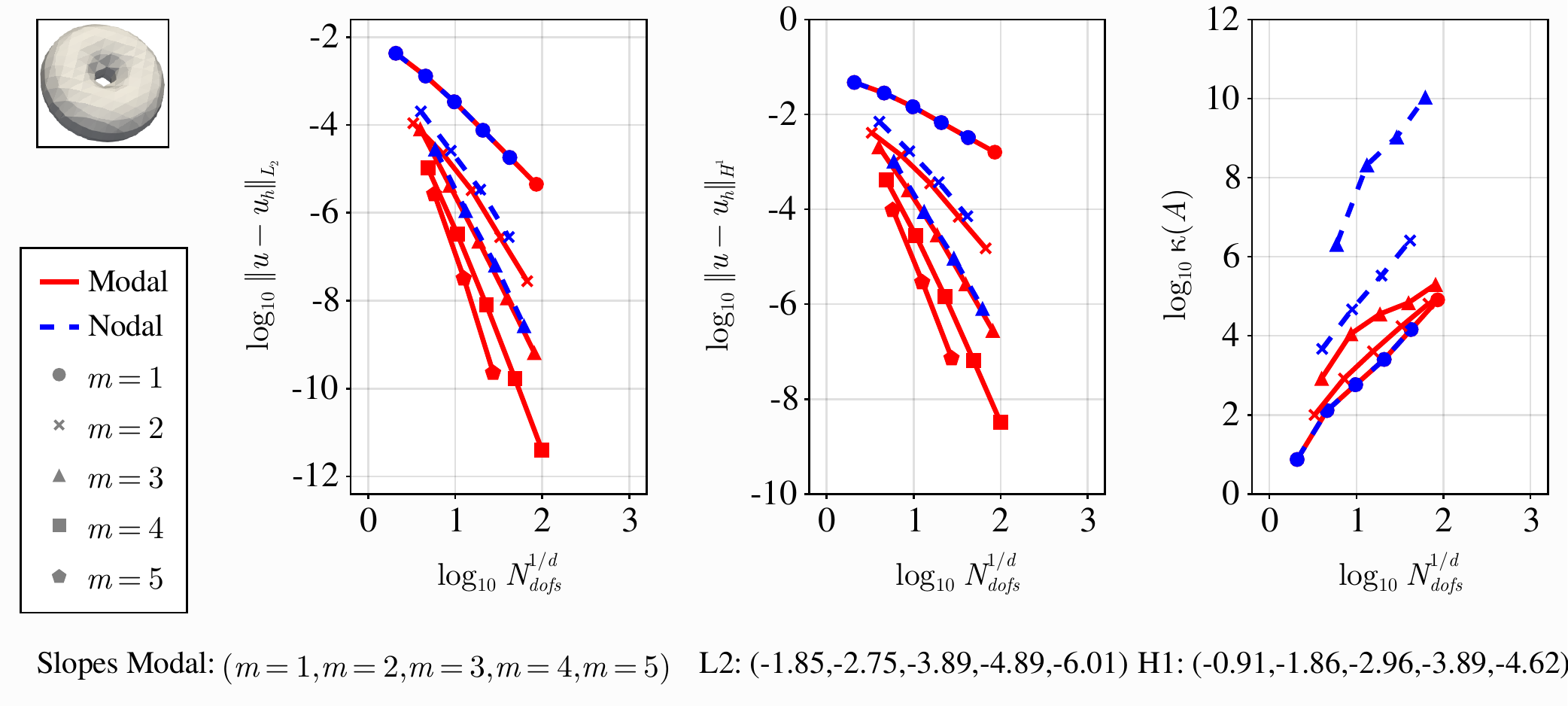}
    \caption{Linear elasticity problem~(\ref{eq:elasticity-weak}) on torus}
  \end{subfigure} \\ \vspace{0.5cm}
  \begin{subfigure}{\textwidth}
    \centering
    \includegraphics[width=0.9\textwidth]{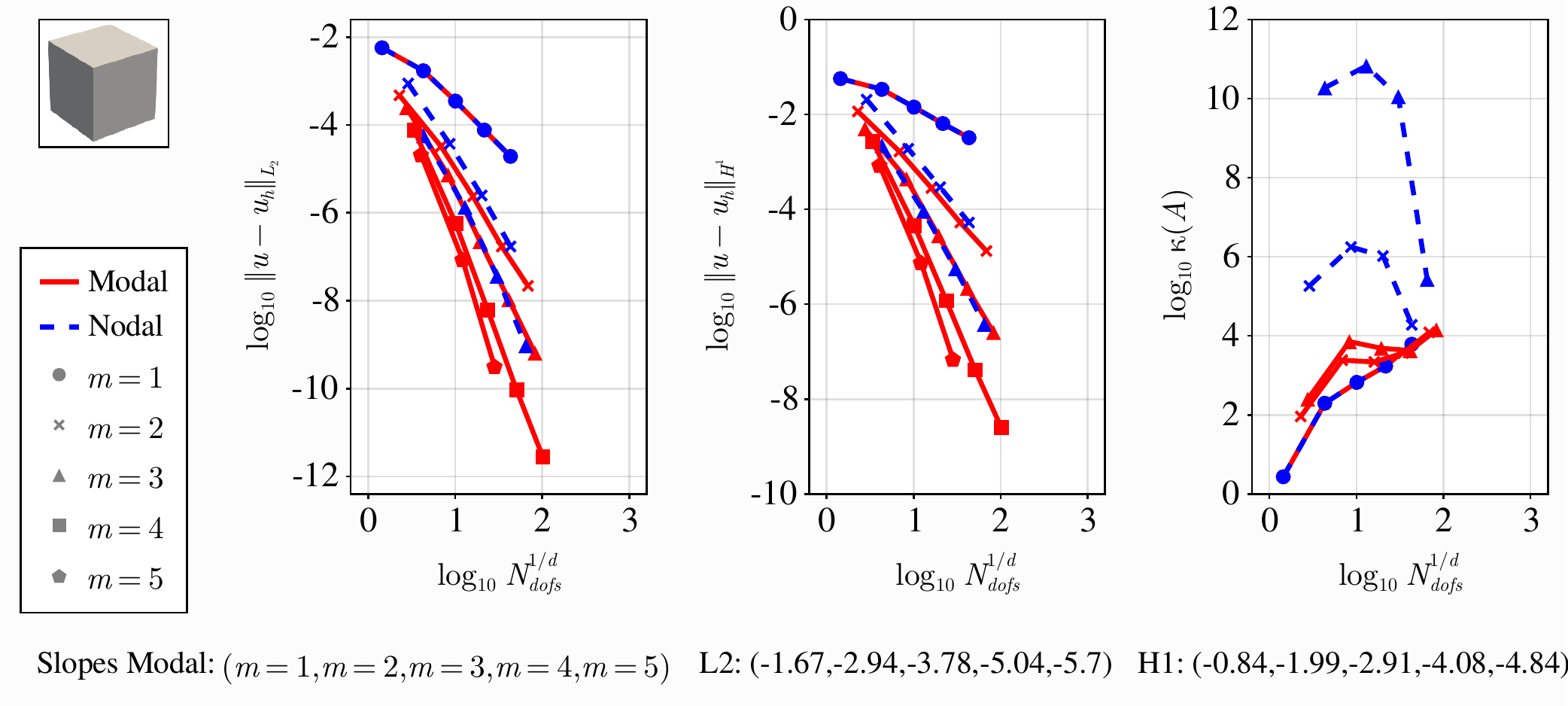}
    \caption{Linear elasticity problem~(\ref{eq:elasticity-weak}) on cube}
  \end{subfigure} \\ \vspace{0.5cm}
  \begin{subfigure}{\textwidth}
    \centering
    \includegraphics[width=0.9\textwidth]{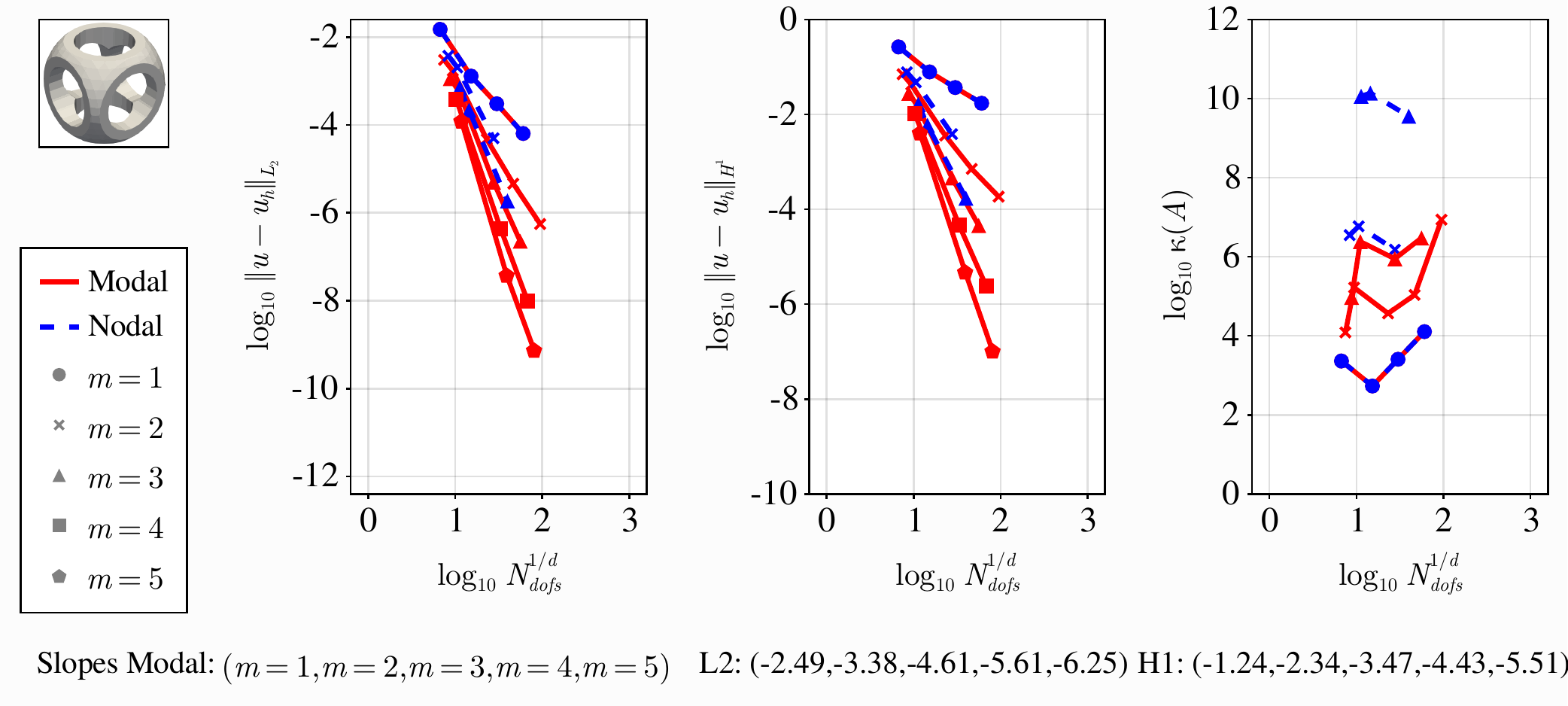}
    \caption{Poisson problem~(\ref{eq:poisson-weak}) on CSG}
  \end{subfigure}
  \caption{Selected 3D convergence tests. Lagrangian vs modal $\mathcal{C}^0$ Ag\ac{fem}.}
  \label{fig:convtest3D}
\end{figure}

\subsection{Sensitivity to cut location}
\label{sub:exp_loc}

Finally, we evaluate the robustness to cut location. We consider the \emph{in-\ac{fe}-space} solution. Instead of refining the mesh, as in the convergence tests, now we fix the cell size $h$ of the background grid and we perturb the position of the embedded geometry. In particular, we apply translations of vector $(t,t)$ (2D) and $(t,t,t)$ (3D), where $t$ is a sliding parameter in the interval $(-h,h)$. Fig.~\ref{fig:boxplots} reports problem~(\ref{eq:poisson-strong}) on the square for a 48x48 grid and problem~(\ref{eq:elasticity-strong}) on the torus for a 12x12x12 grid, other cases yield analogous results. We represent boxplots of the $L^2$-error, the $H^1$-error and the condition number of the Schur complement system matrix for the set of values obtained by sliding $t \in (-h,h)$. The numerical results complement the ones of the convergence tests. Indeed, with respect to accuracy and conditioning, we deduce that the outcomes reported in Sections~\ref{sub:exp_conv_1} are independent of the cut location.

\begin{figure}[ht!]
  \centering
  \begin{subfigure}{\textwidth}
    \centering
    \includegraphics[width=0.85\textwidth]{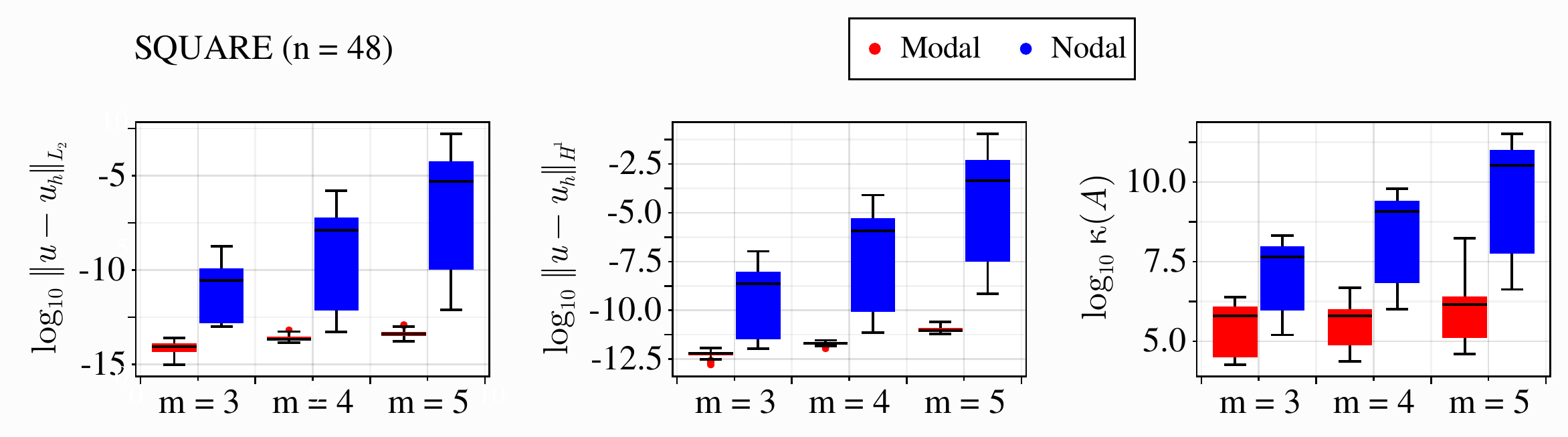}
    \caption{Poisson problem~(\ref{eq:poisson-weak}) on square and $m = 3,4,5$}
  \end{subfigure} \\ \vspace{0.25cm}
  \begin{subfigure}{\textwidth}
    \centering
    \includegraphics[width=0.85\textwidth]{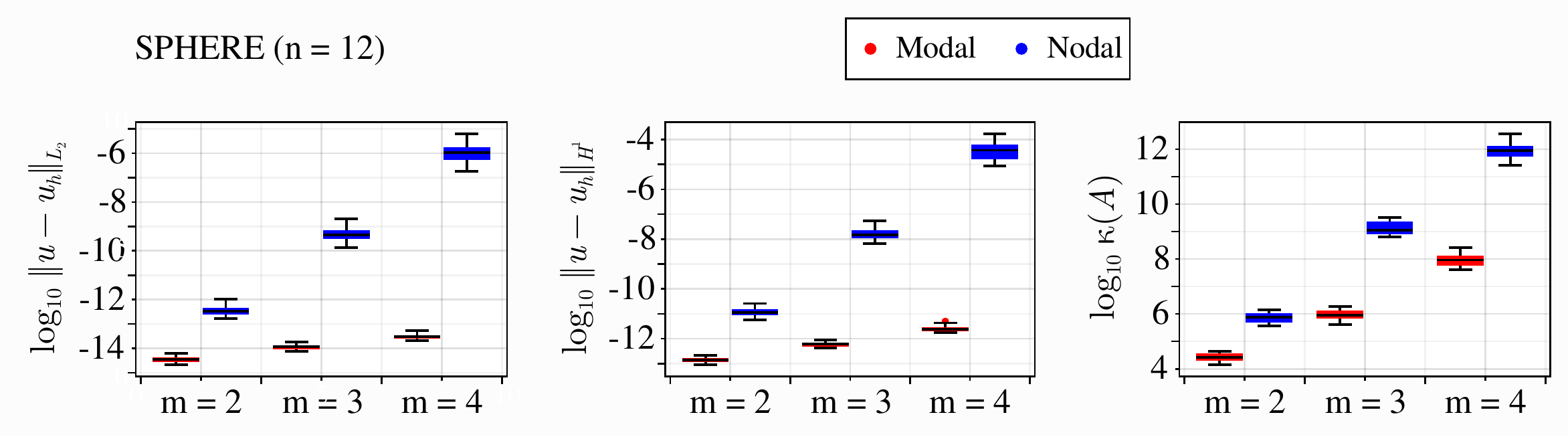}
    \caption{Poisson problem~(\ref{eq:poisson-weak}) on torus  and $m = 2,3,4$}
  \end{subfigure}
  \caption{Sensitivity to location tests.}
  \label{fig:boxplots}
\end{figure}

\section{Conclusions}
\label{sec:conclusions}

In this work, we introduce a new formulation for robust high-order unfitted finite elements by cell aggregation. The method is grounded on the discrete extension operator proposed in the \ac{agfem}, which is suitable for \ac{cg} methods. \acp{agfem} are conceptually attractive for high-order approximations. Their theoretical stability and convergence are independent of the order of approximation and they do not rely on penalising high-order jump derivatives, as ghost penalty methods. However, not all \ac{fe} bases are suitable for high-order discrete extensions. For instance, discrete extensions based on Lagrangian \acp{fe} are pure extrapolations. It follows that the aggregation constraint coefficients scale $m$-exponentially with the aggregate size, where $m$ is the order of approximation. In high order, this leads to huge constants in the continuity and stability estimates, which imply the method becomes prone to ill-conditioning.

In order to mitigate this issue, we propose a novel Ag\ac{fem} formulation, grounded on a generalisation of modal $\mathcal{C}^0$ \ac{fe} bases. We exploit the structure of modal $\mathcal{C}^0$ bubble functions, formed by the product of nodal modes and a high-order polynomial term, by stretching the latter term onto suitable (domain interior) aggregate bounding boxes. The resulting \ac{fe} basis accommodates to the structure and properties of Ag\ac{fem}, but the discrete extension operator is no longer a pure extrapolation; in particular, it is an interpolation in the physical domain for polynomials of order higher than two.

We carried out an extensive numerical experimentation up to order five and 3D, on elliptic boundary value problems, a myriad of embedded geometries and considering both tensor-product and serendipity variants of the \ac{fe} bases. Therein, we demonstrate a clear superiority of high-order modal $\mathcal{C}^0$ \ac{agfem} w.r.t.~the Lagrangian counterpart, in terms of robustness and sensitivity to cut location and aggregate size. Therefore, modal $\mathcal{C}^0$ bases are a much better alternative for high-order Ag\ac{fem} applications. We note that we have restricted the study to \emph{strong} \ac{agfem}, but we expect analogous outcomes with \emph{weak} versions of \ac{agfem} with modal $\mathcal{C}^0$ bases. Other than this, this work also showcases the potential to improve \ac{agfem}, by reusing tools that are consolidated for body-fitted \ac{fem}. Indeed, modal $\mathcal{C}^0$ \acp{fe} are long established and specialised for high-order body-fitted \ac{cg} \ac{fem} and here we show \new{that} they are also amenable for high-order unfitted aggregated methods.

\section*{Acknowledgments}

\newcommand{\thethanks}{This research was partially funded by the Australian Government through the Australian Research Council (project number DP210103092), the European Commission under the FET-HPC ExaQUte project (Grant agreement ID: 800898) within the Horizon 2020 Framework Programme and the project RTI2018-096898-B-I00 from the ``FEDER/Ministerio de Ciencia e Innovación (MCIN) – Agencia Estatal de Investigación (AEI)''. F. Verdugo acknowledges support from the ``Severo Ochoa Program for Centers of Excellence in R\&D (2019-2023)'' under the grant CEX2018-000797-S funded by MCIN/AEI/10.13039/501100011033. This work was also supported by computational resources provided by the Australian Government through NCI under the National Computational Merit Allocation Scheme.}

\thethanks

\setlength{\bibsep}{0.0ex plus 0.00ex}
\bibliographystyle{myabbrvnat}
\bibliography{refs} 
  
\end{document}